\newtheorem{theorem}{Theorem}[section]
\newtheorem{corollary}[theorem]{Corollary}
\newtheorem{lemma}[theorem]{Lemma}
\newtheorem{proposition}[theorem]{Proposition}
\newtheorem{assumption}{Assumption}
\theoremstyle{definition}
\newtheorem{definition}[theorem]{Definition}
\theoremstyle{remark}
\newtheorem{remark}[theorem]{Remark}
\numberwithin{equation}{section}
\begin{document}
\def\Pro{{\mathbb{P}}}
\def\E{{\mathbb{E}}}
\def\e{{\varepsilon}}
\def\veps{{\varepsilon}}
\def\ds{{\displaystyle}}
\def\nat{{\mathbb{N}}}
\def\Dom{{\textnormal{Dom}}}
\def\dist{{\textnormal{dist}}}
\def\R{{\mathbb{R}}}
\def\O{{\mathcal{O}}}
\def\T{{\mathcal{T}}}
\def\Tr{{\textnormal{Tr}}}
\def\sgn{{\textnormal{sign}}}
\def\I{{\mathcal{I}}}
\def\A{{\mathcal{A}}}
\def\H{{\mathcal{H}}}
\def\S{{\mathcal{S}}}

\title{Systems of small-noise stochastic reaction-diffusion equations satisfy a large deviations principle that is uniform over all initial data}%
\author{M. Salins\\ Boston University \\ msalins@bu.edu}
%\thanks{}%
%\date{}%
% ----------------------------------------------------------------
\maketitle

\begin{abstract}
Large deviations principles characterize the exponential decay rates of the probabilities of rare events.
Cerrai and R\"ockner \cite{cr-2004} proved that systems of stochastic reaction-diffusion equations satisfy a large deviations principle that is uniform over bounded sets of initial data.

This paper proves uniform large deviations results for a system of stochastic reaction--diffusion equations in a more general setting than Cerrai and R\"ockner. Furthermore, this paper identifies two common situations where the large deviations principle is uniform over unbounded sets of initial data, enabling the characterization of Freidlin-Wentzell exit time and exit shape asymptotics from unbounded sets.

%If the multiplicative noise coefficients are uniformly bounded, or if the reaction term features super-linear dissipativity and the multiplicative noise term does not grow too quickly, then the large deviations principle is uniform over all initial data, not just bounded sets.
%This paper proves three uniform large deviations results for a system of stochastic reaction--diffusion equations exposed to small multiplicative noise.
%
%If the reaction term can be written as the sum of a decreasing function and a Lipschitz continuous function and the multiplicative noise term is Lipschitz continuous, then the system satisfies a large deviations principle that is uniform over bounded subsets of initial data.
%
%Under the stronger assumption that the multiplicative noise coefficients are uniformly bounded, the large deviations principle is uniform over all initial data, not just bounded sets.
%
%Alternatively, if the reaction term features super-linear dissipativity, like odd-degree polynomials with negative leading terms do, and the multiplicative noise term is unbounded, but does not grow too quickly, then the large deviations principle is uniform over all initial data.
\end{abstract}

%\begin{keyword}[class=MSC2020]
%\kwd[Primary ]{60F10}
%\kwd{60H15}
%\kwd[; secondary ]{35R60}
%\end{keyword}
%
%\begin{keyword}
%\kwd{stochastic reaction-diffusion equation}
%\kwd{large deviations}
%\kwd{stochastic partial differential equation}
%\end{keyword}
%
%\end{frontmatter}
%%%%%%%%%%%%%%%%%%%%%%%%%%%%%%%%%%%%%%%%%%%%%%
%% Please use \tableofcontents for articles %%
%% with 50 pages and more                   %%
%%%%%%%%%%%%%%%%%%%%%%%%%%%%%%%%%%%%%%%%%%%%%%
%\tableofcontents

%%%%%%%%%%%%%%%%%%%%%%%%%%%%%%%%%%%%%%%%%%%%%%
%%%% Main text entry area:
\section{Introduction} \label{S:intro}

This paper investigates uniform large deviations principles for systems of stochastic reaction-diffusion equations. Let $\mathcal{O} \subset \mathbb{R}^d$ be a bounded open set with smooth boundary. Let $r \in \mathbb{N}$ be fixed. For $\e>0$, and continuous initial data $x:\mathcal{O} \times \{1,...,r\} \to \mathbb{R}$,  $X^\e_x(t,\xi) = (X^\e_{x,1}(t,\xi),..., X^\e_{x,r}(t,\xi))$ is the $\mathbb{R}^r$-valued random field solution to the equations for $i \in \{1,...,r\}$,

\begin{equation} \label{eq:intro-SPDE}
 \begin{cases}
  \displaystyle{\frac{\partial X^\e_{x,i}}{\partial t}(t,\xi) = \mathcal{A}_i X^\e_{x,i}(t,\xi) + f_i(t,\xi,X^\e_x(t,\xi))} \\
  \hspace{3cm}
  \displaystyle{+ \sqrt{\e}\sum_{n=1}^r\sigma_{in}(t,\xi,X^\e_x(t,\xi))\frac{\partial w_n}{\partial t}(t,\xi),}\\
  X^\e_x(t,\xi) = 0, \ \ \  \xi \in \partial \mathcal{O}, \ \ \  t \geq 0\\
  X^\e_x(0,\xi) = x(\xi), \ \ \  \xi \in \mathcal{O}.
 \end{cases}
\end{equation}

In the above equation, $\{\mathcal{A}_i\}_{i=1}^r$ are elliptic second-order differential operators, $\sigma_{in}$ are globally Lipschitz continuous in the third variable, and $f_i$ can be written as $f_i(t,\xi,x) = g_i(t,\xi,x_i) + h_i(t,\xi,x)$ where $g_i$ is continuous and non-increasing in its third argument and $h_i$ is globally Lipschitz continuous in its third argument. The multiplicative noise coefficients $\sigma_{in}$ are Lipschitz continuous in their third variable. The Gaussian noises $\frac{ \partial w_n}{\partial t}$, defined on some probability space $(\Omega, \mathcal{F},\Pro)$ are white in time, but possibly correlated in space.

As $\e \to 0$, the stochastic perturbations disappear and the solutions converges to the solution of the unperturbed system of partial differential equations $X^0_x$. This convergence is uniform over finite time intervals in the sense that for any $T>0$,
\begin{equation}
  \lim_{\e \to 0} \sup_{t \in [0,T]}\sup_{\xi \in \mathcal{O}}\sup_{i \in \{1,...,r\}}|X^\e_{x,i}(t,\xi) - X^0_{x,i}(t,\xi)| = 0 \text{ in probability}.
\end{equation}
Over unbounded time intervals, however, the stochastic system $X^\e_x$ will behave fundamentally differently than $X^0_x$ for any positive $\e>0$ if the $\sigma_{in}$ terms are sufficiently non-degenerate. For example, let $D \subset C(\mathcal{O}\times \{1,...,r\})$ be a collection of continuous functions that are invariant under the unperturbed dynamics. This means that if $x \in D$, then $X^0_x(t,\cdot) \in D$ for all $t>0$. Under reasonable assumptions on $D$ and $\sigma_{in}$, one can show that for any $\e>0$, $X^\e_x$ exits $D$ with probability one. The Freidlin-Wentzell exit time problem characterizes the exponential divergence rate of the exit time
\begin{equation} \label{eq:exit-time}
  \tau^\e_x : = \inf\{ t>0 : X^\e_x(t,\cdot) \not \in D\}
\end{equation}
as well as the limiting behavior of the exit shape $X^\e(\tau^\e_x,\cdot)$. Some results on exit time problems for stochastic partial differential equations can be found in \cite{f-1988,bcf-2015,c-1999,cs-2014-smolu-grad,cs-2016-smolu-AoP,cm-1997,cm-1990,dz,fjl-1982,sbd-2019}.

A large deviations principle characterizes the exponential decay rates of rare probabilities \cite{fw,dz,fk,v}. An important step for characterizing Freidlin-Wentzell exit behaviors is to prove that solutions $X^\e_x$ to the system \eqref{eq:intro-SPDE} satisfy a large deviations principle that is uniform with respect to the initial data $x \in D$.  The exact definition of the uniform large deviations principle is Definition \ref{def:ULDP}.

In \cite{cr-2004}, Cerrai and R\"ockner proved that systems of stochastic reaction-diffusion equations with a small non-Gaussian noisy perturbation, like \eqref{eq:intro-SPDE}, satisfy a uniform large deviations principle that is uniform over subsets of initial data that are bounded in the supremum norm. Their result significantly improved upon large deviations results by Freidlin \cite{f-1988}, Sowers \cite{s-1992}, Peszat \cite{p-1994}, and Kallianpur and Xiong \cite{kx-1996} by removing assumptions about global Lipschitz continuity of the reaction terms and ellipticity assumptions about the multiplicative noise terms. Furthermore, \cite{cr-2004} was the first paper to address the uniformity of the large deviations for the stochastic reaction-diffusion equation with respect to initial data in non-compact  bounded sets.

This current paper significantly strengthens Cerrai and R\"ockner's results. First, we further relax Cerrai and R\"ockner's assumptions, removing their assumptions about the local Lipschitz continuity and polynomial growth rate of the reaction terms. We assume only that the reaction terms that can be written as the sum of a decreasing function and a Lipschitz continuous function (see Assumption \ref{assum:vector-field}). %While such an $f$ may fail to be locally Lipschitz, we prove that an associated deterministic map $\mathcal{M}$ that maps the stochastic convolution to the solution of the reaction-diffusion equation is actually \textit{globally} Lipschitz continuous under these assumptions, enabling us to prove the global uniformity of the large deviations principle. This is the content of Section \ref{S:M}.
We prove that this large class of stochastic reaction-diffusion equations satisfy a large deviations principle that is uniform over sets of initial data that are bounded in the supremum norm (Theorem \ref{thm:ULDP-bounded-subsets}).

The other main results of this paper show that in two common situations the large deviations principle holds uniformly over \textit{all} continuous initial data, not just over bounded subsets of initial data. These results enable the characterization of Freidlin-Wentzell exit time and exit shape asymptotics when the exit set $D$ is unbounded. %Cerrai and R\"ockner's result was limited to uniformity over bounded subsets of initial data because their arguments relied on an assumption that the reaction term, $f$, is locally Lipschitz continuous. %We prove in this paper that the local Lipschitz continuity of $f$ is not required for the uniform large deviations result to hold.

If the multiplicative noise coefficients $\sigma_{in}$ are uniformly bounded, then the large deviations principle will hold uniformly over all continuous initial data (Theorem \ref{thm:ULDP-sigma-bounded}). In particular, whenever the system is exposed to additive noise, the large deviations principle holds uniformly over all initial data. Results about large deviations principles holding uniformly for unbounded sets of initial conditions, even in the additive noise case, were only previously known for equations with globally Lipschitz continuous reaction term \cite{s-2019}.

Next, we consider the case where the reaction terms $f_i$ feature super-linear dissipativity. This means that there exist constants $\mu>0$, $m>1$, and $c_0>0$ such that the decreasing functions $v_i \mapsto g_i(t,\xi,v_i)$ satisfy
\begin{equation}
  g_i(t,\xi,v_i)\sgn(v_i) \leq -\mu|v_i|^m \text{ for } |v_i|>c_0.
\end{equation}
This case is motivated by the Allen-Cahn equation where the reaction term is an odd-degree polynomial with negative leading coefficient such as $f_i(t,\xi,x) = -x_i^3 + x_i$ (see, for example, \cite{fjl-1982}). Such a superlinear dissipative nonlinearity strongly forces the solutions towards finite values. In the presence of super-linear dissipativity, the large deviations principle can be uniform over all continuous initial conditions even when the multiplicative noise coefficients are unbounded (Theorem \ref{thm:ULDP-super-dissip}). The allowable growth rate of the multiplicative noise coefficients depends on the degree $m$ of super-linear dissipativity of the reaction term. This is the first result showing that the large deviations principle holds uniformly over all initial conditions when the multiplicative noise coefficients are unbounded.

Proving that these large deviations principles hold uniformly over unbounded sets of initial data requires a fundamentally different approach than the one used by Cerrai and R\"ockner \cite{cr-2004}. Their argument relies on the assumption that the forcing term $f$ is locally Lipschitz continuous, and then uses localization techniques to approximate their equation by equations with globally Lipschitz forcing terms to prove the results. Such an approach can never lead to results that are uniform over unbounded sets of initial data.

The assumption of local Lipschitz continuity is replaced by a monotonicity condition (see, for example, \cite{mr-2010}). In Assumption \ref{assum:vector-field}, below, we assume that $f_i = g_i + h_i$ is the sum of a decreasing function $g_i$, which does not need to be locally Lipschitz continuous, and a globally Lipschitz continuous function $h_i$. In Section \ref{S:M} we prove that this assumption implies that an associated solution mapping is globally Lipschitz continuous, even when $g_i$ fails to be locally Lipschitz continuous. No localization techniques are required, enabling the proof of results that are uniform over unbounded subsets of initial data.

The proofs of the three main uniform large deviations results (Theorems \ref{thm:ULDP-bounded-subsets}, \ref{thm:ULDP-sigma-bounded}, and \ref{thm:ULDP-super-dissip}) are based on a variational principle for functions of infinite dimensional Wiener processes that is due to Budhiraja and Dupuis \cite{bd-2000}. In the context of the reaction-diffusion equation \eqref{eq:intro-SPDE}, Budhiraja and Dupuis proved that for any bounded, continuous $h: C([0,T]\times \bar{\mathcal{O}}\times \{1,...,r\}) \to \mathbb{R}$, $\e>0$, and $x \in C(\bar{\mathcal{O}}\times \{1,...,r\})$,
\begin{align}
  &\e \log \E \exp \left(-\frac{h \left( X^\e_x \right)}{\e}\right) %\nonumber\\
  %&
  = -\inf_{u \in \mathscr{A}} \E \left[\frac{1}{2}\sum_{n=1}^r \int_0^T \int_{\mathcal{O}} |u_n(s,\xi)|^2 d\xi ds  + h \left( X^{\e,u}_x\right)\right].
\end{align}
In the above expression, $\mathscr{A}$ is a collection of stochastic controls $u \in L^2(\Omega \times [0,T]\times \mathcal{O} \times \{1,..,r\})$ that are adapted to the natural filtration of the driving noises and $X^{\e,u}_x$ is the solution to the controlled reaction diffusion equation
\begin{equation} \label{eq:intro-contolled-SPDE}
 \begin{cases}
  \displaystyle{\frac{\partial X^{\e,u}_{x,i}}{\partial t}(t,\xi) = \mathcal{A}_i X^{\e,u}_{x,i}(t,\xi) + f_i(t,\xi,X^{\e,u}_x(t,\xi))} \\
  \hspace{3cm}\displaystyle{+ \sqrt{\e}\sum_{n=1}^r\sigma_{in}(t,\xi,X^{\e,u}_x(t,\xi))\frac{\partial w_n}{\partial t}(t,\xi)}\\
   \hspace{3cm}\displaystyle{+ \sum_{n=1}^r\sigma_{in}(t,\xi,X^{\e,u}_x(t,\xi))Q_n u_n(t,\xi),}\\
  X^{\e,u}_x(t,\xi) = 0, \ \ \  \xi \in \partial \mathcal{O}, \ \ \  t \geq 0\\
  X^{\e,u}_x(0,\xi) = x(\xi), \ \ \  \xi \in \mathcal{O}.
 \end{cases}
\end{equation}
The linear operators $Q_n$ are the covariances of the noises $w_n$ (see Assumption \ref{assum:noise} below).

%Let $(\Omega, \mathcal{F}, \Pro)$ be a probability space and let $\beta(t) = (\beta_1(t), \beta_2(t), ...)$ be a collection of independent, identically distributed one-dimensional Brownian motions defined on that probability space. Let $\mathcal{F}_t$ be the natural filtration of the Brownian motions.  For any measurable $h: C([0,T]\times \mathbb{N}) \to \mathbb{R}$, we have the variational representation (Theorem 3.6 of \cite{bd-2000})
%\[\E \exp \left(- h(\beta) \right) = - \inf_{v \in \mathscr{A}} \E \left(\frac{1}{2}\sum_{j=1}^\infty \int_0^T |v_j(s)|^2ds + h\left( \beta + \int_0^\cdot v(s)ds\right) \right).\]
%In the above expression, $\mathscr{A}$ is the collection of $\mathcal{F}_t$-adapted process $v \in L^2(\Omega \times [0,T]\times \mathbb{N})$.
A major advancement in streamlining the proofs of uniform large deviations principles for small-noise SPDEs is the \textit{weak convergence approach} due to Budhiraja, Dupuis, and Maroulas \cite{bdm-2008}.
In the context of these reaction diffusion equations, their result shows that $X^\e_x$ satisfy a large deviations principle that holds uniformly over compact sets of initial data if whenever $x_n \to x$ in the supremum norm, $\e_n \to 0$ and $u_n \rightharpoonup u$ in distribution in the weak topology on $L^2([0,T]\times \mathcal{O} \times \{1,...,r\})$, the associated control problems $X^{\e_n,u_n}_{x_n}$ converge weakly to $X^{0,u}_x$.
%Let $(\mathcal{E}, \rho)$ be a Polish space (complete separable metric space), which in the context of SPDEs will represent the space of solutions to a SPDE. Let $(\mathcal{E}_0,\rho_0)$ be a Polish space that represents the space of admissible initial data for the SPDE. Assume that for $\e>0$ there is a measurable mapping $\mathscr{G}^\e: \mathcal{E}_0 \times C([0,T]\times \mathbb{N}) \to \mathcal{E}$. for $x \in \mathcal{E}_0$ and $\e>0$, let $X^\e_x = \mathscr{G}^\e(x,\sqrt{\e}\beta)$. Let $\mathscr{A}$ be the set of adapted controls defined above and for $N>0$ let
%\[\mathscr{A}_N: = \left\{v \in \mathscr{A}: \Pro\left(\sum_{j=1}^N \int_0^T |v_j(s)|^2 ds \leq N \right)=1 \right\}.\]
%Define the controlled processes for $x \in \mathcal{E}_0$, $\e>0$, and $u \in \mathscr{A}_N$ by $X^{\e,u}_x = \mathscr{G}^\e\left( x, \sqrt{\e}\beta + \int_0^\cdot u(s)ds\right)$. If for any sequences $(x_n,u_n,\e_n)$ such that $x_n \to x$ in $\mathcal{E}_0$, $u_n \rightharpoonup u$  in distribution in the weak topology on $L^2([0,T]\times \mathbb{N})$ and $\e_n \to 0$, it follows that
%\[X^{\e_n, u_n}_{x_n} \text{ converges in distribution to } X^{0,u}_x \text{ in the } \mathcal{E} \text{ topology,}\]
%then the random variables $\{X^\e_x\}$ satisfy a uniform large deviations principle that is uniform over $x$ in \textbf{compact} subsets of initial data.
Many authors have applied this approach to prove that many SPDEs satisfy large deviations results that are uniform over compact sets of initial data \cite{bm-2009,bm-2012,bb-2011,bdf-2012,bdm-2010,c-1999,cr-2004,cm-2010,dm-2009,fs-2017,g-2005,hw-2015,l-2010,lrz-2013,lr-2014,ml-2013,os-2011,rxz-2010,rz-2008,rzz-2010,ss-2006,sgds-2010,xz-2009,zz-2017}.
The restriction to compact sets of initial data is due to the fact that their argument is based on weak convergence and if the initial data, $x_n$, do not belong to a compact subset then it is possible that no subsequence of $X^{\e_n,u_n}_{x_n}$ will converge weakly.

%In \cite{sbd-2019}, we showed that when the linear part of an SPDE generates a compact $C_0$ semigroup, by working in the weak-$\star$ topology on the space of initial data, the weak convergence approach can be modified to prove uniformity over bounded subsets of initial data. Because the differential operators in the reaction-diffusion equation \eqref{eq:intro-SPDE} generate compact semigroups, the results of \cite{sbd-2019} allow us to recover the results of Cerrai and R\"ockner \cite{cr-2004} via a weak convergence argument. Unfortunately, the weak convergence approach cannot easily be used to prove the uniformity of large deviations principles over unbounded subsets of initial conditions.

Of course, there are many applications, including Freidlin-Wentzell exit problems, where large deviations principles must hold uniformly over non-compact sets. In fact, because bounded subsets of infinite dimensional Banach spaces cannot be compact, Cerrai and R\"ockner's results about uniformity of large deviations principles holding over bounded subsets of initial data \cite{cr-2004} cannot be proved directly via the weak convergence approach.

In \cite{s-2019}, we proved that the variational principle can be used to prove  large deviations principles that are uniform over non-compact and even unbounded subsets of initial data, but we require a stronger notion of convergence of controlled equations than weak convergence. Specifically, if for a set $D \subset C(\bar{\mathcal{O}}\times \{1,...,r\})$ of continuous initial data and for any $\delta>0$ and $N>0$,
\[\lim_{\e \to 0}\sup_{x \in D} \sup_{u \in \mathscr{A}_N} \Pro \left(\left|X^{\e,u}_x-X^{0,u}_x\right|_{C([0,T]\times \bar{\mathcal{O}}\times \{1,...,r\})}>\delta \right) = 0,\]
then $\{X^\e_x\}$ satisfies a uniform large deviations principle that is uniform over $x \in D$. 
In the above expression, $\mathscr{A}_N$ is the set of controls
\[\mathscr{A}_N:= \left\{u \in \mathscr{A}:\Pro \left( \sum_{n=1}^r \int_0^T \int_{\mathcal{O}} |u_n(s,\xi)|^2 d\xi ds \leq N \right)= 1  \right\}.\]
In this paper, we are particularly interested in the case where $D= C(\bar{\mathcal{O}}\times \{1,...,r\})$ is the entire function space.
Using the specific form of the systems of stochastic reaction diffusion equations \eqref{eq:intro-SPDE}, we will be able to prove this kind of convergence in probability holds uniformly over unbounded subsets of initial data, proving our main results.

In Section \ref{S:notation-assumptions} we fix our main notations, present the main assumptions, and define the mild solution. In Section \ref{S:ULDP} we recall the definition of the uniform large deviations principle (ULDP) and we recall the major results about variational representations of infinite dimensional Brownian motion and sufficient conditions that imply uniform large deviations principles. In Section \ref{S:main}, we present the three main results of the paper.  In Section \ref{S:example}, we give an example application of the main results.

Before proving the three main results, in Section \ref{S:M} we study the properties of a fixed-point mapping $\mathcal{M}$ and show that this mapping is well-posed and globally Lipschitz continuous under our weak assumptions that the vector field $f$ is the sum of a decreasing function and a Lipschitz continuous function. In Section \ref{S:exit-uniq}, we prove that the mild solutions to the stochastic reaction-diffusion equations and the controlled stochastic reaction diffusion equation exist and are unique under our weak assumptions. These existence and uniqueness results do not appear elsewhere in the literature.

In Section \ref{S:ULDP-bounded-x}, we prove Theorem \ref{thm:ULDP-bounded-subsets}, which says that Cerrai and R\"ockner's \cite{cr-2004} result about uniformity of the large deviations principle over bounded subsets can be recovered under our weaker assumptions. In Section \ref{S:sigma-bounded}, we prove Theorem \ref{thm:ULDP-sigma-bounded}, which says that when $\sigma$ is uniformly bounded, the large deviations principle is uniform over all initial conditions. In Section \ref{S:super-dissip}, we prove Theorem \ref{thm:ULDP-super-dissip}, which says that when the non-linearity $f$ features super-linear dissipativity and $\sigma$ is unbounded but does not grow too quickly, then the stochastic reaction-diffusion equation satisfies a large deviations principle that is uniform over all initial conditions.

In Appendix \ref{S:appendix-subdiff} we recall results about the left-derivative of a supremum norm for a continuous process. In Appendix \ref{S:stoch-conv} we recall important estimates on the stochastic convolution due to Cerrai \cite{c-2003,c-2009-khasminskii}. In Appendix \ref{S:unif-bounds}, we recall bounds that are uniform with respect to the initial conditions of a stochastic reaction-diffusion equation when the reaction terms features super-linear dissipativity.

\section{Notations and assumptions} \label{S:notation-assumptions}
\subsection{Notations} \label{SS:notations}
%For any metric space $\mathcal{E}$, let $C(\mathcal{E})$ denote the space of bounded continuous functions $h: \mathcal{E} \to \mathbb{R}$ endowed with the supremum norm
%\begin{equation}
%  |h|_{C(\mathcal{E})}: = \sup_{\eta \in \mathcal{E}} |h(\eta)|.
%\end{equation}

For a Euclidean set $A \subset \mathbb{R}^j$ for some $j \in \mathbb{N}$ let $C(A)$ be the set of continuous functions $y:A \to \mathbb{R}$.
Because of the imposed boundary conditions in \eqref{eq:intro-SPDE}, we will work in the spaces of continuous functions with zero boundary conditions. Let
\begin{equation} \label{eq:E-tilde}
  \tilde{E} := \{y \in C(\bar{\mathcal{O}}): y(\xi) = 0 \text{ for } \xi \in \partial \mathcal{O}\}.
\end{equation}
be the space of continuous functions on $\bar{\mathcal{O}}$ with zero boundary conditions endowed with the supremum norm
\begin{equation}
  |y|_{\tilde{E}}: = \sup_{\xi \in \mathcal{O}} |y(\xi)|.
\end{equation}
Any  continuous vector-valued function $x = (x_1, ...., x_r): \bar{\mathcal{O}} \to \mathbb{R}^r$ can be equivalently thought of as a scalar-valued continuous function in the space $C(\bar{\mathcal{O}} \times \{1,...,r\})$. %This second point of view is more convenient for our analysis.
Let
\begin{equation} \label{eq:E-def}
  E:= \left\{ x \in C(\bar{\mathcal{O}} \times \{1,...,r\}): x_i (\xi) = 0 \text{ for } i \in \{1,...r\}, \xi \in \partial \mathcal{O}\right\}
\end{equation}
and, for $T>0$,
\begin{equation} \label{eq:E_T-def}
  E_T := \begin{Bmatrix*}[l] \varphi \in C([0,T]\times \bar{ \mathcal{O}} \times \{1,..,r\}):\\ \quad \varphi_i(t,\xi)=0 \text{ for } i \in \{1,...,r\}, t \in [0,T], \xi \in \partial \mathcal{O}  \end{Bmatrix*}.
\end{equation}
$E$ and $E_T$ are endowed with the supremum norms
\begin{equation}
  |x|_E: = \sup_{i \in \{1,...,r\}}\sup_{\xi \in \mathcal{O}} |x_i(\xi)|
\end{equation}
and
\begin{equation}
  |\varphi|_{E_T} : = \sup_{i \in \{1,...,r\}} \sup_{\xi \in \mathcal{O}} \sup_{t \in [0,T]} |\varphi_i(t,\xi)|.
\end{equation}

We remark that this is a slightly different definition than the $E = C (\bar{\mathcal{O}}:\mathbb{R}^r)$ with $|x|_E = \sup_{\xi \in \bar{\mathcal{O}}} \left( \sum_{i=1}^r |x_i(\xi)|^2 \right)^{\frac{1}{2}}$ norm that was used in \cite{cr-2004}. Even though the norms are equivalent, the supremum norm is more convenient for our purposes than the mixture of the supremum and Euclidean norms.

We will show in Theorem \ref{thm:exist-uniq} that the solutions $X^\e$ to the SPDE \eqref{eq:intro-SPDE} exist, are unique, and are $E_T$-valued if their initial data is in $E$.

Throughout the paper we will use other common function spaces including $L^p$ spaces. If no measure is specified, then $L^p$ spaces are defined with respect to the Lebesgue measure on uncountable sets and the counting measure on discrete sets. For example for $p \geq 1$, $L^p([0,T]\times \mathcal{O}\times \{1,..r\})$ is the set of functions $u:[0,T]\times \mathcal{O}\times \{1,...,r\}$ for which the norm
\begin{equation} \label{eq:Lp-norm}
  |u|_{L^p([0,T]\times\mathcal{O}\times\{1,...,r\})} : = \left(\sum_{n=1}^r \int_0^T \int_{\mathcal{O}} |u_n(t,\xi)|^p d\xi dt \right)^{\frac{1}{p}} < +\infty.
\end{equation}
%
%The fractional Sobolev spaces $W^{s,p}(\mathcal{O})$ for $s \in (0,1)$, $p \geq 1$ are endowed with the norm
%\begin{equation} \label{eq:sobolev}
%   |x|_{W^{s,p}(\mathcal{O})} := \left( \int_{\mathcal{O}} |x(\xi)|^p d\xi  + \int_{\mathcal{O}} \int_{\mathcal{O}} \frac{|x(\xi) - x(\eta)|^p}{|\xi - \eta|^{d+sp}} d \xi d \eta \right)^{\frac{1}{p}}
%\end{equation}
%where $d$ is the dimension of $\mathcal{O}$.
%
%The H\"older spaces are $C^\gamma(\mathcal{O})$ for $\gamma \in (0,1)$ are endowed with the norm
%\begin{equation} \label{eq:Holder}
%  |x|_{C^\gamma(\bar{\mathcal{O}})} := \sup_{\xi \in \bar{\mathcal{O}}} |x(\xi)| + \sup_{\xi \not = \eta \in \bar{\mathcal{O}}} \frac{|x(\xi) - x(\eta)|}{|\xi-\eta|^\gamma}.
%\end{equation}

%For any metric space $\mathcal{E}$ with metric $\vartheta$ we define $\dist_{\mathcal{E}}: \mathcal{E} \times 2^{\mathcal{E}} \to [0,+\infty)$ to be the minimal distance from an element to a set
%\begin{equation} \label{eq:dist}
%  \dist_{\mathcal{E}}(x,B): = \inf_{y \in B} \vartheta(x,y).
%\end{equation}
%The distance function shows up in definition of the large deviations upper bound (Section \ref{S:ULDP}).

For any Banach spaces $\mathcal{E}_1, \mathcal{E}_2$, the set $\mathscr{L}(\mathcal{E}_1, \mathcal{E}_2)$ is the space of bounded linear operators from $\mathcal{E}_1$  to $\mathcal{E}_2$. If $\mathcal{E}_1 = \mathcal{E}_2$, then the notation $\mathscr{L}(\mathcal{E}_1) = \mathscr{L}(\mathcal{E}_1, \mathcal{E}_1)$.

\subsection{Main assumptions} \label{SS:assum}
Now we specify our main assumptions about the objects in \eqref{eq:intro-SPDE}. Assumptions \ref{assum:vector-field}, \ref{assum:sigma}, \ref{assum:elliptic-operators}, and \ref{assum:noise} hold throughout the paper. Later in  Section \ref{S:main} we introduce Assumption \ref{assum:sigma-bounded}, which is only used in Theorem \ref{thm:ULDP-sigma-bounded} and Assumption \ref{assum:super-dissip}, which is only used in Theorem \ref{thm:ULDP-super-dissip}.

\begin{assumption}[Vector field] \label{assum:vector-field}
  The vector field $f:[0,+\infty)\times \mathcal{O}\times \mathbb{R}^r \times \{1,...,r\} \to \mathbb{R}$ can be written as
  \begin{equation}
    f_i(t,\xi, u) = g_i(t,\xi, u_i) + h_i(t,\xi, u).
  \end{equation}
  %for any $(t, \xi,i) \in [0,+\infty]\times \mathcal{O}\times \{1,...,r\}$, the mapping
%  $$\mathbb{R}\ni v \mapsto g_i(t,\xi,v)$$
%  is non-increasing and the mapping
%  $$\mathbb{R}^r \ni x \mapsto f_i(t,\xi,x)$$
%  is globally Lipschitz continuous with linear growth.  Specifically,
  For  any $i \in \{1,...,r\}$, $t\geq 0$, $\xi \in \mathcal{O}$,  the function $\mathbb{R} \ni x \mapsto g_i(t,\xi,x)$ is continuous and decreasing in the sense that for any $x,y \in \mathbb{R}$ such that $x> y $
  \begin{equation} \label{eq:g-decr}
     g_i(t,\xi,x) - g_i(t,\xi,y) \leq  0,
  \end{equation}
  There exists an increasing function $L:[0,+\infty) \to [0,+\infty)$ such that for any $x, y \in \mathbb{R}^r$,
  \begin{equation} \label{eq:h-Lip-assum}
    \sup_{i \in \{1,...,r\}}\sup_{s \in [0,t]} \sup_{\xi \in \mathcal{O}} |h_i(s,\xi,x) - h_i(s,\xi,y)| \leq L(t) \sup_{i \in \{1,...,r\}}|x_i - y_i|
  \end{equation}
  and
  \begin{equation} \label{eq:h-lin-growth}
    \sup_{i \in \{1,...,r\}}\sup_{s \in [0,t]} \sup_{\xi \in \mathcal{O}} |h_i(s,\xi,x) | \leq L(t)\left( 1+  \sup_{i \in \{1,...,r\}}|x_i| \right).
  \end{equation}
\end{assumption}

%Assumption \ref{assum:vector-field} will be assumed throughout.
%
%\begin{assumption}[Vector field with super dissipativity] \label{assum:super-dissip}
%  That there exist $\alpha>0$, $C>0$, and $m>1$ such that for any $i \in \{1,..,r\}$, $t\geq 0, \xi \in \mathcal{O}$, and $v,z \in \mathbb{R}$,
%  \[(f_i(t,\xi,v + z) - f_i(t,\xi, v))\sgn(z) \leq - \alpha |v|^m + C(1 + |z|^m). \]
%\end{assumption}
%If $f_i$ is an odd-degree polynomial with negative leading term then Assumption \ref{assum:super-dissip} is satisfied.

\begin{assumption}[Multiplicative noise coefficient] \label{assum:sigma}
  %The functions $\sigma_{in}: [0,+\infty)\times \mathcal{O} \times \mathbb{R}^r \to \mathbb{R}$ are Lipschitz continuous and have at most linear growth in the third argument.
  There exists an increasing function $L:[0,+\infty) \to [0,+\infty)$ such that for all $x,y \in \mathbb{R}^r$,
  \begin{equation} \label{eq:sigma-Lip}
    \sup_{i,n \in \{1,...,r\}} \sup_{s \in [0,t]} \sup_{\xi \in \mathcal{O}} |\sigma_{in}(s,\xi,x) - \sigma_{in}(s,\xi,y)| \leq L(t) \sup_{i \in \{1,...,r\}}|x_i - y_i|
  \end{equation}
  and for any $x \in \mathbb{R}^r$
  \begin{equation} \label{eq:sigma-lin-growth}
    \sup_{i,n \in \{1,...,r\}} \sup_{s \in [0,t]} \sup_{\xi \in \mathcal{O}} |\sigma_{in}(s,\xi,x) | \leq L(t) \left(1 +  \sup_{i \in \{1,...,r\}}|x_i|\right)
  \end{equation}
\end{assumption}

\begin{assumption}[Elliptic operators] \label{assum:elliptic-operators}
  The spatial domain $\mathcal{O}\subset \mathbb{R}^d$ is open, bounded, and has smooth boundary.
  For $i \in \{1,...,r\}$, the second-order elliptic operators $\mathcal{A}_i$ are of the form
  \begin{equation} \label{eq:elliptic-ops}
    \mathcal{A}_i \varphi(\xi):= \sum_{j=1}^d \sum_{k=1}^d a^i_{jk}(\xi) \frac{\partial^2 \varphi}{\partial \xi_j \partial \xi_k}(\xi) + \sum_{j=1}^d b^i_j(\xi) \frac{\partial \varphi}{\partial \xi_j}(\xi).
  \end{equation}
  In the above expression, $a^i_{jk}:\bar{\mathcal{O}} \to \mathbb{R}$ are continuously differentiable and $b^i_j:\bar{\mathcal{O}} \to \mathbb{R}$ are continuous.
  The matrix $(a^i_{jk}(\xi))_{jk}$ is symmetric and uniformly elliptic in the sense that there exists $\kappa>0$ such that for any vector $(x_1,...,x_d)$, %any $i \in \{1,...,r\}$, and any $\xi \in \mathcal{O}$,
  \begin{equation} \label{eq:elliptic}
    \inf_{\xi \in \mathcal{O}} \inf_{i \in \{1,...,r\}}\sum_{j=1}^d\sum_{k=1}^d a^i_{jk}(\xi)x_j x_k \geq \kappa \sum_{j=1}^d x_j^2.
  \end{equation}
\end{assumption}

\begin{proposition} \label{prop:A_i}
  The operators $\mathcal{A}_i$ can be written as
  \[\mathcal{A}_i = \mathcal{B}_i + \mathcal{L}_i\]
  where
  \begin{equation} \label{eq:symmetric-operator}
    \mathcal{B}_i \varphi(\xi):= \sum_{j=1}^d \sum_{k=1}^d \frac{\partial}{\partial \xi_k} \left(a^i_{jk}(\xi) \frac{\partial \varphi}{\partial \xi_j}(\xi) \right)
  \end{equation}
  is self-adjoint and
  \begin{equation}
    \mathcal{L}_i \varphi(\xi) := \sum_{j=1}^d \left( b^i_j(\xi) - \sum_{k=1}^d \frac{\partial a^i_{jk}}{\partial \xi_k}(\xi) \right) \frac{\partial \varphi}{\partial \xi_j}(\xi)
  \end{equation}
  is a first-order differential operator.

  For each $i \in \{1,...,r\}$, there exists an orthonormal system of eigenvectors $\{e_{i,k}\}_{k=1}^\infty \subset L^2(\mathcal{O})$ and eigenvalues $\{\alpha_{i,k}\}_{k=1}^\infty$ such that the realization $B_i$ of $\mathcal{B}_i$ in $L^2(\mathcal{O})$ with the imposed boundary conditions satisfies
  \begin{equation} \label{eq:eigen}
    B_i e_{i,k} = -\alpha_{i,k} e_{i,k}.
  \end{equation}
  The eigenvalues are non-negative, diverge to infinity, and can be written in increasing order $0\leq  \alpha_{i,k} \leq \alpha_{i,k+1}$. By elliptic regularity results, for fixed $k,i$, $e_{i,k} \in \tilde E$ defined in \eqref{eq:E-tilde}. See, for example,  \cite[Chapter 6.5]{evans}.
\end{proposition}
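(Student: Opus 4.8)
The plan is to prove the decomposition by a one-line computation and then to read off the spectral statements from the classical theory of symmetric coercive forms on $H^1_0(\mathcal{O})$, combined with an elliptic-regularity bootstrap for the last claim. For the decomposition, since each $a^i_{jk}\in C^1(\bar{\mathcal{O}})$ the product rule gives
\[
  \mathcal{B}_i\varphi(\xi)=\sum_{j=1}^d\sum_{k=1}^d a^i_{jk}(\xi)\frac{\partial^2\varphi}{\partial\xi_j\partial\xi_k}(\xi)+\sum_{j=1}^d\Bigl(\sum_{k=1}^d\frac{\partial a^i_{jk}}{\partial\xi_k}(\xi)\Bigr)\frac{\partial\varphi}{\partial\xi_j}(\xi),
\]
so that $\mathcal{B}_i\varphi+\mathcal{L}_i\varphi$ produces exactly $\sum_{j,k}a^i_{jk}\partial^2_{\xi_j\xi_k}\varphi+\sum_j b^i_j\partial_{\xi_j}\varphi=\mathcal{A}_i\varphi$, the first-order terms coming from differentiating the $a^i_{jk}$ being cancelled by the corresponding terms in $\mathcal{L}_i$ (note $b^i_j-\sum_k\partial_{\xi_k}a^i_{jk}$ is continuous, so $\mathcal{L}_i$ is a genuine first-order operator).

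For the spectral part, I would introduce the bilinear form $\mathfrak{a}_i(\varphi,\psi):=\sum_{j,k}\int_{\mathcal{O}}a^i_{jk}\,\partial_{\xi_j}\varphi\,\partial_{\xi_k}\psi\,d\xi$ on $H^1_0(\mathcal{O})$, which is symmetric (the matrix $(a^i_{jk})$ being symmetric), bounded (the $a^i_{jk}$ being in $L^\infty(\mathcal{O})$), and coercive: the uniform ellipticity \eqref{eq:elliptic} yields $\mathfrak{a}_i(\varphi,\varphi)\ge\kappa\int_{\mathcal{O}}|\nabla\varphi|^2\,d\xi$, which by the Poincar\'e inequality on the bounded set $\mathcal{O}$ dominates $c\|\varphi\|_{H^1_0(\mathcal{O})}^2$ for some $c>0$. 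The representation theorem for closed symmetric coercive forms (the Friedrichs construction) then produces a self-adjoint operator $-B_i$ on $L^2(\mathcal{O})$ with $\langle-B_i\varphi,\psi\rangle_{L^2}=\mathfrak{a}_i(\varphi,\psi)$; its domain is the realization of $\mathcal{B}_i$ in $L^2(\mathcal{O})$ under the imposed Dirichlet condition, $B_i$ is self-adjoint, and $-B_i\ge cI$. Since $\mathcal{O}$ is bounded, the embedding $H^1_0(\mathcal{O})\hookrightarrow L^2(\mathcal{O})$ is compact by Rellich--Kondrachov, hence $(I-B_i)^{-1}$ is a compact self-adjoint operator on $L^2(\mathcal{O})$, and the spectral theorem for such operators supplies an orthonormal basis $\{e_{i,k}\}_{k\ge1}$ of $L^2(\mathcal{O})$ consisting of eigenvectors of $B_i$ with eigenvalues $-\alpha_{i,k}$, which can be listed so that $0<c\le\alpha_{i,k}\le\alpha_{i,k+1}$ and $\alpha_{i,k}\to+\infty$; in particular the $\alpha_{i,k}$ are nonnegative, as required.

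Finally, each $e_{i,k}\in H^1_0(\mathcal{O})$ solves $\mathcal{B}_i e_{i,k}=-\alpha_{i,k}e_{i,k}\in L^2(\mathcal{O})$ weakly, so interior elliptic regularity together with the smoothness of $\partial\mathcal{O}$ gives $e_{i,k}\in H^2(\mathcal{O})$; feeding this back through the $L^p$ elliptic estimates (valid since the $a^i_{jk}$ are continuous) and the Sobolev embeddings finitely many times raises the integrability of the right-hand side until $e_{i,k}\in C(\bar{\mathcal{O}})$, and $e_{i,k}\in H^1_0(\mathcal{O})$ then forces the boundary trace to vanish, so $e_{i,k}\in\tilde E$; see \cite[Chapter 6]{evans}. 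I expect this last regularity step to be the only place needing care, precisely because the coefficients $a^i_{jk}$ are assumed merely continuously differentiable rather than $C^\infty$, so the classical Schauder bootstrap is unavailable and one must instead invoke the $L^p$ (or De Giorgi--Nash--Moser) theory for divergence-form equations with continuous coefficients.
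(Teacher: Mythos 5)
Your proposal is correct and is essentially the same argument the paper relies on: the paper offers no independent proof of this proposition, deferring to the standard spectral theory for symmetric elliptic operators (Evans, Chapter 6.5), and your write-up (product rule for the decomposition, symmetric coercive form on $H^1_0(\mathcal{O})$ with Poincar\'e, Rellich--Kondrachov compactness plus the spectral theorem for compact self-adjoint operators, and an $L^p$ elliptic bootstrap giving $e_{i,k}\in C(\bar{\mathcal{O}})$ with vanishing trace) is precisely that standard theory. No gaps worth flagging; your strict positivity $\alpha_{i,k}\ge c>0$ is simply stronger than the nonnegativity claimed in the statement.
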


\begin{assumption}[Noise] \label{assum:noise}
  Fix a filtered probability space $(\Omega, \mathcal{F}, \{\mathcal{F}_t\}, \Pro)$.
  The driving noise $w=(w_1,...,w_r)$ can be formally written as the sum
  \begin{equation} \label{eq:noise}
    w_n(t,\xi):= \sum_{j=1}^\infty \lambda_{n,j} W_{n,j}(t) f_{n,j}(\xi)
  \end{equation}
  where for fixed $n \in \{1,...,r\}$, $\{f_{n,j}\}_{j=1}^\infty$ is an orthonormal basis of $L^2(\mathcal{O})$, and for each fixed $n,j$ $f_{n,j} \in \tilde E$. Such a sequence $f_{n,j}$ exists because one could take $f_{n,j}:=e_{n,j}$ \eqref{eq:eigen}.  $\{\{W_{n,j}\}_{j=1}^\infty\}_{n=1}^{r}$ is a countable collection of independent one-dimensional Brownian motions on  $(\Omega,\mathcal{F}, \{\mathcal{F}_t\},\Pro)$. The numbers $\lambda_{n,j}\geq 0$ and there exists $\beta \in (0,1)$ and $\rho \in [2,+\infty]$ such that
  \begin{equation}\label{eq:beta-rho-relation}
    \frac{\beta(\rho - 2)}{\rho} <1,
  \end{equation}
  \begin{equation} \label{eq:alpha-sum}
    \sum_{i=1}^r\sum_{k=1}^\infty \alpha_{i,k}^{-\beta} |e_{i,k}|_{\tilde{E}}^2 <\infty,
  \end{equation}
  and
  \begin{align} \label{eq:lambda-sum}
    &\sum_{j=1}^\infty \sum_{n=1}^r \lambda_{n,j}^\rho |f_{n,j}|_{\tilde{E}}^2 <\infty, \text{ if } \rho<+\infty \\ &\text{ or }\sup_j \sup_n \lambda_{n,j} <+\infty,  \text{ if } \rho=+\infty ,
  \end{align}
  where $\alpha_{i,k}$, $e_{i,k}$ are the eigenvalues of $B_i$ from \eqref{eq:eigen}.
\end{assumption}
\begin{remark}
  Cerrai \cite{c-2003,c-2009-khasminskii} proved that Assumption \ref{assum:noise} is a sufficient condition that implies that mild solutions to the stochastic reaction-diffusion equation are continuous functions of space and time.
\end{remark}
  For $n \in \{1,..r\}$, let $Q_n: L^2(\mathcal{O}) \to L^2(\mathcal{O})$ be the bounded linear operator
  \begin{equation} \label{eq:Q_n-def}
    Q_n f_{n,j} = \lambda_{n,j} f_{n,j}
  \end{equation}
  and let $Q: L^2(\mathcal{O}\times \{1,...r\}) \to L^2(\mathcal{O}\times \{1,...r\})$ be defined so that for any $n \in \{1,...,r\}$, and $\xi \in \mathcal{O}$,
  \begin{equation} \label{eq:Q-def}
    [Qu]_n(\xi) = [Q_n u_n](\xi).
  \end{equation}

\subsection{Semigroups and mild solution} \label{SS:semigroup}
Let $S_i(t)$ be the semigroup on $\tilde{E}$ \eqref{eq:E-tilde} generated by the elliptic operator $\mathcal{A}_i$ with zero boundary conditions. It is standard that $S_i(t)$ is a $C_0$ semigroup on $\tilde{E}$ (see \cite{evans}).

For $x \in E$, let $[S(t)x]_i(\xi): = [S_i(t)x_i](\xi)$. In this way, $S(t): E \to E$ is a $C_0$ contraction semigroup on $E$.

The mild solution for $X^\e_{x,i}$ is defined to be the solution to the system of integral equations for $i \in \{1,...r\}$,
\begin{align} \label{eq:mild-def-component}
  X^\e_{x,i}(t) = &S_i(t)x_i + \int_0^t S_i(t-s)F_i(s,X^\e_x(s))ds \nonumber\\
  &+ \sqrt{\e} \sum_{n=1}^r \int_0^t S_i(t-s) R_{in}(s,X^\e_x(s))dw_n(s).
\end{align}
In the above equation, the spatial variable $\xi$ has been suppressed. \\$F_i: [0,+\infty)\times E \to \tilde{E}$ is the Nemytskii operator where for any $i \in \{1,...r\}$, $t>0$, $\xi \in \mathcal{O}$, and $x \in E$,
\begin{equation} \label{eq:Nemytskii-F}
  [F_i(t,x)](\xi) : = f_i(t,\xi,x(\xi)),
\end{equation}
and
$R_{in}: [0,+\infty)\times E \to \mathscr{L}(L^2(\mathcal{O}))$ is defined such that for any $i,n \in \{1,..r\}$, $t>0$, $\xi \in \mathcal{O}$, and $x \in E$, and $h \in L^2(\mathcal{O})$,
\begin{equation} \label{eq:Nemytskii-R}
  [R_{in}(t,x)h](\xi) = \sigma_{in}(t,\xi,x(\xi))h(\xi).
\end{equation}

By the definition of the noise \eqref{eq:noise}, the stochastic convolution can be understood as the infinite sum of one-dimensional Ito integrals
\[\int_0^t S_i(t-s) R_{in}(s,X^\e_x(s))dw_n(s) = \sum_{j=1}^\infty  \int_0^t S_i(t-s) R_{in}(s,X^{\e}_x(s))   \lambda_{n,j} f_{n,j} dW_{n,j}(s). \]
The properties of the stochastic convolution can be found in \cite{c-2003} and are included in Appendix \ref{S:stoch-conv} below.

\begin{definition} \label{def:mild}
The \textit{mild solution} to \eqref{eq:intro-SPDE} is defined to be the $E_T$-valued solution to
\begin{equation} \label{eq:mild-def}
  X^\e_{x}(t) = S(t)x + \int_0^t S(t-s)F(s,X^\e_x(s))ds + \sqrt{\e} \int_0^t S(t-s) R(s,X^\e_x(s))dw(s)
\end{equation}
In the above equation $F: [0,+\infty)\times E \to E$ is the vector $F=(F_1,...,F_r)$ and $R:[0,+\infty) \times E \to \mathscr{L}(L^2(\mathcal{O}\times \{1,...,r\}))$ is the matrix $R = (R_{in})_{in}$. $w = (w_1,...w_r)$. We prove that there exists a unique mild solution in Section \ref{S:exit-uniq}.
\end{definition}

To prove the large deviations results we will study the convergence properties of mild solutions to the stochastic control problems \eqref{eq:intro-contolled-SPDE}. The mild solution to \eqref{eq:intro-contolled-SPDE} will solve the integral equation
\begin{align} \label{eq:controlled-mild}
  X^{\e,u}_x(t) = &S(t)x + \int_0^t S(t-s)F(s,X^{\e,u}_x(s))ds \nonumber\\
  &+ \sqrt{\e}\int_0^t S(t-s)R(s,X^{\e,u}_x(s))dw(s)\nonumber\\
  &+ \int_0^t S(t-s)R(s,X^{\e,u}_x(s))Qu(s)ds.
\end{align}

\section{Uniform large deviations principle and the equicontinuous uniform Laplace principle} \label{S:ULDP}
In this section we recall the definitions of Freidlin and Wentzell's uniform large deviations principle (ULDP) and a result from \cite{s-2019} that proves that the uniform convergence in probability of certain controlled process implies that a collection of processes satisfies the ULDP.

Let $(\mathcal{E},\vartheta)$ be a Polish space and let $\mathcal{E}_0$ be a set used for indexing (there are no topological assumptions on $\mathcal{E}_0$). When we apply these results in the sequel, we will set $\mathcal{E} =E_T$ and $\mathcal{E}_0 =E$. Let $\{Y^\e_x\}_{x \in \mathcal{E}_0, \e>0}$ be a collection of $\mathcal{E}$-valued random variables. For every $x \in \mathcal{E}_0$, let $I_x: \mathcal{E} \to [0,+\infty]$ be a lower-semicontinuous function called a rate function. For each $x \in \mathcal{E}_0$ and $s \geq 0$, let
\[\Phi_x(s): = \left\{\varphi \in \mathcal{E}: I_x(\varphi)\leq s \right\}\]
be the level sets of the rate function. %Let $\mathscr{C}$ be a collection of subsets of $\mathcal{E}_0$.
Let $\dist_{\mathcal{E}}: \mathcal{E} \times 2^{\mathcal{E}} \to [0,+\infty)$ be defined as the minimal distance between an element of $\mathcal{E}$ and a set
\begin{equation} \label{eq:dist-def}
  \textnormal{dist}_\mathcal{E}(\varphi, \Psi) := \inf_{\psi \in \Psi} \vartheta(\varphi,\psi).
\end{equation}

\begin{definition}[Uniform large deviations principle (ULDP) (Section 3.3 of \cite{fw})] \label{def:ULDP}
  A family $\{Y^\e_x\}_{x \in \mathcal{E}_0,\e>0}$ of $\mathcal{E}$-valued random variables satisfies a uniform large deviations principle uniformly over a set $D \subset \mathcal{E}_0$ with respect to the rate functions $I_x$ if
  \begin{enumerate}
    \item for any $\delta>0$ and  $s_0 \geq 0$, %and $C \in \mathscr{C}$,
      \begin{equation} \label{eq:ULDP-lower}
        \liminf_{\e \to 0} \inf_{x \in D} \inf_{\varphi \in \Phi_x(s_0)} \left(\e \log \Pro\left( \vartheta(Y^\e_x, \varphi)<\delta \right) + I_x(\varphi) \right) \geq 0.
      \end{equation}
    \noindent and
    \item for any $\delta>0$ and  $s_0 \geq 0$, %and $C \in \mathscr{C}$,
      \begin{equation} \label{eq:ULDP-upper}
        \limsup_{\e \to 0} \sup_{x \in D} \sup_{s \in [0,s_0]} \left(\e \log \Pro\left(\dist_{\mathcal{E}}\left(Y^\e_x, \Phi_x(s) \right) \geq \delta \right) + s \right) \leq 0.
      \end{equation}
  \end{enumerate}
\end{definition}

%In \cite{s-2019}, the uniform large deviations principle is proved to be equivalent to the so-called equicontinuous uniform Laplace principle (EULP). Recall that a family $L$ of functions $h:\mathcal{E} \to \mathbb{R}$ is called equibounded and equicontinuous if
%\begin{equation}
%  \sup_{h \in L} \sup_{ \varphi \in \mathcal{E}} |h(\varphi)|< +\infty \text{ and } \lim_{\delta \to 0} \sup_{h \in L} \sup_{\vartheta(\varphi,\psi)<\delta} |h(\varphi) - h(\psi)| = 0.
%\end{equation}
%
%\begin{definition}[Equicontinuous uniform Laplace principle (EULP)] \label{def:EULP}
%  A family $\{Y^\e_x\}_{x \in \mathcal{E}_0,\e>0}$ of $\mathcal{E}$-valued random variables satisfies an equicontinuous uniform Laplace principle uniformly over $\mathscr{C}$ with respect to the rate functions $I_x$ if for any equibounded and equicontinuous family $L$ of functions $h: \mathcal{E} \to \mathbb{R}$ and any $C \in \mathscr{C}$,
%  \begin{equation} \label{eq:EULP}
%    \lim_{\e \to 0} \sup_{h \in L} \sup_{x \in C} \left|\e \log \E \exp \left( - \frac{h(Y^\e_x)}{\e}\right) + \inf_{\varphi \in \mathcal{E}}\{h(\varphi) + I_x(\varphi)\} \right|=0.
%  \end{equation}
%\end{definition}
%
%\begin{theorem}[Theorem 2.10 of \cite{s-2019}] \label{thm:EULP=ULDP}
%  The uniform large deviations principle and equicontinuous uniform Laplace principle are equivalent.
%\end{theorem}

The following theorem describes a sufficient condition that implies that measurable functions of a countable collection of Brownian motions satisfy the ULDP. %The result is based on a variational representation for measurable functionals of infinite dimensional Brownian motion due to Budhiraja and Dupuis \cite{bd-2000}.

Suppose that $W = \{W_j(\cdot)\}_{j=1}^\infty$ is a countable collection of i.i.d. one-dimensional Brownian motions on a filtered probability space $(\Omega, \mathcal{F}, \{\mathcal{F}_t\}, \Pro)$. For fixed $T>0$ and any $x \in \mathcal{E}_0$ suppose that $\mathscr{G}_x: C([0,T]\times \mathbb{N}) \to \mathcal{E}$ is a measurable mapping. For $\e\geq 0$ and $x \in \mathcal{E}_0$, let
 \begin{equation}
   Y^\e_x := \mathcal{G}_x(\sqrt{\e}W)
 \end{equation}
 For each $N>0$, let $\mathscr{B}_N$ be the collection of $u \in L^2(\Omega\times [0,T] \times \mathbb{N})$ that are adapted to the filtration $\mathcal{F}_t$ and satisfy
 \begin{equation} \label{eq:B_N-def}
   \Pro \left(\sum_{j=1}^\infty\int_0^T |u_j(s)|^2ds \leq N  \right)=1.
 \end{equation}

 For each $u \in \mathscr{B}_N$, let $Y^{\e,u}_x$ denote the controlled $\mathcal{E}$-valued random variable
 \begin{equation}
   Y^{\e,u}_x := \mathcal{G}_x \left(\sqrt{\e} W + \int_0^\cdot u(s)ds \right).
 \end{equation}
 
 Proving that a family $Y^\e_x$ satisfies a ULDP directly using Definition \ref{def:ULDP} can be cumbersome. The proofs of the main results in this paper are based on Theorem 2.13 of \cite{s-2019}, which proves that uniform convergence in probability of the controlled system $Y^{\e,u}_x$ to $Y^{0,u}_x$ as $\e \to 0$ implies the ULDP.
 
\begin{theorem}[Theorem 2.13 of \cite{s-2019}] \label{thm:ULDP-suff-cond-abstract}
  Let $D \subset \mathcal{E}_0$. If for any $\delta>0$ and $N>0$, %and $C \in \mathscr{C}$,
  \begin{equation}
    \lim_{\e \to 0} \sup_{x \in D} \sup_{u \in \mathscr{B}_N} \Pro \left( \vartheta \left(Y^{\e,u}_x,Y^{0,u}_x \right)>\delta \right)=0,
  \end{equation}
  then the family $\{Y^\e_x\}$ satisfies a ULDP uniformly over $D$ with respect to the rate functions $I_x: \mathcal{E} \to [0,+\infty]$ defined by
  \begin{equation}
    I_x(\varphi):= \inf \left\{\frac{1}{2}\sum_{j=1}^\infty \int_0^T |u_j(s)|^2ds: \varphi = Y^{0,u}_x, u \in L^2([0,T]\times \mathbb{N})  \right\}.
  \end{equation}
\end{theorem}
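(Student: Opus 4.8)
The plan is to verify the two estimates \eqref{eq:ULDP-lower} and \eqref{eq:ULDP-upper} of Definition~\ref{def:ULDP} directly, applying the Budhiraja--Dupuis variational representation to the measurable maps $\mathcal{G}_x$ and extracting all of the uniformity in $x$ from the single hypothesis. Write $\omega_{N,\delta}(\e):=\sup_{x\in D}\sup_{u\in\mathscr{B}_N}\Pro\bigl(\vartheta(Y^{\e,u}_x,Y^{0,u}_x)>\delta\bigr)$, so $\omega_{N,\delta}(\e)\to 0$ as $\e\to 0$ for each fixed $N,\delta$; let $\|\cdot\|$ be the $L^2([0,T]\times\mathbb{N})$ norm, and note $Y^{0,v}_x=\mathcal{G}_x\bigl(\int_0^\cdot v(s)\,ds\bigr)$ for deterministic $v$. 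The essential point, which lets the argument survive over non-compact and even unbounded $D$, is that the ``bad'' event in \eqref{eq:ULDP-upper} is always the complement of a $\delta$-neighborhood of a \emph{sublevel set} $\Phi_x(s)$ of the candidate rate function; this special structure will substitute for the uniform compactness of level sets needed in the classical passage from a Laplace principle to a large deviations upper bound.

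For the lower bound \eqref{eq:ULDP-lower}, fix $\eta>0$, $s_0\ge 0$, $\delta>0$, and for $x\in D$, $\varphi\in\Phi_x(s_0)$ pick a deterministic control $\phi$ with $Y^{0,\phi}_x=\varphi$ and $\tfrac12\|\phi\|^2\le I_x(\varphi)+\eta\le s_0+\eta$; set $N:=2(s_0+\eta)$, so $\phi\in\mathscr{B}_N$. Let $\mathbb{Q}_\e$ have density $d\mathbb{Q}_\e/d\Pro=\exp\bigl(\tfrac1{\sqrt\e}\sum_j\int_0^T\phi_j\,dW_j-\tfrac1{2\e}\|\phi\|^2\bigr)$; under $\mathbb{Q}_\e$ the process $\tilde W:=W-\tfrac1{\sqrt\e}\int_0^\cdot\phi\,ds$ is a Brownian motion, and since $\sqrt\e\,\tilde W+\int_0^\cdot\phi\,ds=\sqrt\e\,W$, the law of $Y^\e_x$ under $\mathbb{Q}_\e$ equals the law of $Y^{\e,\phi}_x$ under $\Pro$, so $\mathbb{Q}_\e\bigl(\vartheta(Y^\e_x,\varphi)<\delta\bigr)\ge 1-\omega_{N,\delta}(\e)$. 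Rewriting $\Pro\bigl(\vartheta(Y^\e_x,\varphi)<\delta\bigr)$ as a $\mathbb{Q}_\e$-integral against $d\Pro/d\mathbb{Q}_\e=\exp\bigl(-\tfrac1{\sqrt\e}\sum_j\int_0^T\phi_j\,d\tilde W_j-\tfrac1{2\e}\|\phi\|^2\bigr)$, then using Jensen's inequality on the event $\{\vartheta(Y^\e_x,\varphi)<\delta\}$ and bounding the stochastic-integral term by Cauchy--Schwarz and the It\^o isometry, one obtains
\begin{equation*}
\e\log\Pro\bigl(\vartheta(Y^\e_x,\varphi)<\delta\bigr)\;\ge\;\e\log\bigl(1-\omega_{N,\delta}(\e)\bigr)-\frac{\sqrt\e\,\|\phi\|}{1-\omega_{N,\delta}(\e)}-\tfrac12\|\phi\|^2.
\end{equation*}
Since $\|\phi\|^2\le 2(s_0+\eta)$, $\tfrac12\|\phi\|^2\le I_x(\varphi)+\eta$, and $\omega_{N,\delta}(\e)$ is independent of $x$ and $\varphi$, adding $I_x(\varphi)$, taking $\inf_{x\in D}\inf_{\varphi\in\Phi_x(s_0)}$, and then $\liminf_{\e\to 0}$ leaves only $-\eta$ on the right; as $\eta>0$ was arbitrary, \eqref{eq:ULDP-lower} follows.

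For the upper bound \eqref{eq:ULDP-upper}, fix $s_0\ge 0$, $\delta>0$ and set $M:=s_0/(1\wedge\delta)$. For $x\in D$, $s\in[0,s_0]$ put $F_{x,s}:=\{\varphi\in\mathcal{E}:\dist_{\mathcal{E}}(\varphi,\Phi_x(s))\ge\delta\}$ and $h(\varphi):=M\bigl(1\wedge\dist_{\mathcal{E}}(\varphi,F_{x,s})\bigr)$, a nonnegative, $M$-bounded, $M$-Lipschitz function vanishing on $F_{x,s}$. As $e^{-h(Y^\e_x)/\e}\ge\mathbf{1}_{\{Y^\e_x\in F_{x,s}\}}$, the Budhiraja--Dupuis representation gives $\e\log\Pro(Y^\e_x\in F_{x,s})\le-\inf_u\E\bigl[\tfrac12\|u\|^2+h(Y^{\e,u}_x)\bigr]$. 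Choosing a near-optimal $u^\e$ and using $h\ge 0$ one has $\E\|u^\e\|^2\le 2M+2\eta$, so truncating the accumulated energy at level $N':=(2M+2\eta)/\kappa$ yields $\tilde u^\e\in\mathscr{B}_{N'}$ with $\E[\tfrac12\|\tilde u^\e\|^2+h(Y^{\e,\tilde u^\e}_x)]\le\inf_u\E[\tfrac12\|u\|^2+h(Y^{\e,u}_x)]+\eta+M\kappa$. Applying the hypothesis with $(N',\delta')$ and the Lipschitz bound on $h$ replaces $h(Y^{\e,\tilde u^\e}_x)$ by $h(Y^{0,\tilde u^\e}_x)$ up to an error $M\delta'+(\tfrac{N'}2+M)\,\omega_{N',\delta'}(\e)$, and the matter reduces to the \emph{deterministic} inequality, valid for every control $v$ and every $x$,
\begin{equation*}
\tfrac12\|v\|^2+h\bigl(Y^{0,v}_x\bigr)\;\ge\;\min\bigl(s,\;M(1\wedge\delta)\bigr),
\end{equation*}
which holds because if $\tfrac12\|v\|^2\le s$ then $Y^{0,v}_x\in\Phi_x(s)$, whence $\dist_{\mathcal{E}}(Y^{0,v}_x,F_{x,s})\ge\delta$, while otherwise the first term already exceeds $s$. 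Since $M(1\wedge\delta)=s_0\ge s$, this gives $\inf_u\E[\tfrac12\|u\|^2+h(Y^{\e,u}_x)]\ge s-M\delta'-(\tfrac{N'}2+M)\,\omega_{N',\delta'}(\e)-\eta-M\kappa$, hence $\e\log\Pro(Y^\e_x\in F_{x,s})+s\le M\delta'+(\tfrac{N'}2+M)\,\omega_{N',\delta'}(\e)+\eta+M\kappa$, a bound independent of $x$ and $s$; taking $\limsup_{\e\to 0}$ and then $\delta',\kappa,\eta\to 0$ gives \eqref{eq:ULDP-upper}.

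The main obstacle is the upper bound. In the usual weak-convergence route one would first prove a uniform Laplace principle and then invoke compactness of the level sets $\Phi_x(s)$ to test against arbitrary closed sets, and this compactness genuinely fails when $D$ is unbounded. The workaround above is that one only ever needs to bound $\Pro(Y^\e_x\in F_{x,s})$ for the specific sets $F_{x,s}$, and for those the required lower bound on $\inf_u\E[\tfrac12\|u\|^2+h(Y^{\e,u}_x)]$ — namely that it is at least $s$ up to errors tending to $0$ — is supplied for free by the displayed deterministic inequality, which uses only that $\Phi_x(s)$ is a sublevel set of $I_x$. Two routine matters remain: one should check that each $I_x$ is a bona fide (lower semicontinuous) rate function, which follows from weak-to-strong continuity of $v\mapsto Y^{0,v}_x$ on bounded subsets of $L^2$ together with their weak compactness; and one must arrange the truncation so that $N'$ — and therefore the rate $\omega_{N',\delta'}(\e)\to 0$ furnished by the hypothesis — depends only on $M,\eta,\kappa$ and not on $x$ or $s$, which is precisely what makes every estimate above uniform over $x\in D$ and $s\in[0,s_0]$.
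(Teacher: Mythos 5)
Your proposal is correct, and it establishes exactly the two bounds \eqref{eq:ULDP-lower}--\eqref{eq:ULDP-upper} of Definition \ref{def:ULDP}, but it is organized differently from the proof in the cited source (this paper itself gives no proof; it imports the result from \cite{s-2019}). There, the argument passes through an intermediate object: the hypothesis of uniform convergence in probability of $Y^{\e,u}_x$ to $Y^{0,u}_x$ over $\mathscr{B}_N$ is first shown, via the Budhiraja--Dupuis representation \cite{bd-2000} and the same truncation of near-optimal controls you use, to imply an \emph{equicontinuous uniform Laplace principle}, and a separate equivalence theorem converts that Laplace principle into the Freidlin--Wentzell ULDP, using Lipschitz test functions of essentially the same form as your $h=M\bigl(1\wedge\dist_{\mathcal{E}}(\cdot,F_{x,s})\bigr)$. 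You instead verify the two Freidlin--Wentzell bounds directly: for the upper bound your mechanism (variational representation, energy bound $\E\|u^\e\|^2\le 2M+2\eta$, stopping-time truncation into $\mathscr{B}_{N'}$ with $N'$ depending only on $M,\eta,\kappa$, Lipschitz transfer from $Y^{\e,\tilde u^\e}_x$ to $Y^{0,\tilde u^\e}_x$, and the pathwise sublevel-set inequality $\tfrac12\|v\|^2+h(Y^{0,v}_x)\ge s$) is essentially the same engine as in \cite{s-2019}, just without the Laplace-principle detour; for the lower bound you replace the Laplace/variational estimate of \cite{s-2019} by a classical Girsanov tilt with the near-optimal deterministic control $\phi$, the key identity being that $Y^\e_x$ under $\mathbb{Q}_\e$ has the law of $Y^{\e,\phi}_x$ under $\Pro$, so the hypothesis at the fixed level $N=2(s_0+\eta)$ gives $\mathbb{Q}_\e$-mass near $\varphi$ uniformly in $x$ and $\varphi\in\Phi_x(s_0)$. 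Your route buys a shorter, self-contained proof of precisely this theorem; the route of \cite{s-2019} buys reusable equivalences between the ULDP and uniform Laplace principles for general bounded equicontinuous test families. One small caveat: lower semicontinuity of $I_x$ (required by the preamble of Definition \ref{def:ULDP}) does not follow from the stated hypothesis, and your suggested fix (weak-to-strong continuity of $v\mapsto Y^{0,v}_x$ on bounded sets of $L^2$) is an additional property of the model rather than a consequence of the theorem's assumptions; it is, however, not used anywhere in your verification of the two displayed bounds, so it is a presentational point rather than a gap.
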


%We will use Theorem \ref{thm:EULP-suff-cond} to prove the main results in this paper.
In the context of the system of reaction-diffusion equations \eqref{eq:mild-def}, we will let $\mathcal{E}_0 := E $ defined in \eqref{eq:E-def} be the set of initial data and for a fixed time horizon $T>0$ let $\mathcal{E}=E_T$ defined in \eqref{eq:E_T-def}. Because the driving noise is defined in terms of a countable collection of i.i.d. Brownian motions $W =\{\{W_{n,j}\}_{j=1}^\infty\}_{n=1}^r$ (see \eqref{eq:noise}) and because we will show that the mild solutions \eqref{eq:mild-def} exist and are unique (see Corollary \ref{cor:exist-uniq}), there exists a measurable mapping $\mathscr{G}_x: C([0,T]\times \{1,...,r\}\times \mathbb{N}) \to \mathcal{E}$ such that $X^\e_x := \mathscr{G}_x(\sqrt{\e}W)$ solves \eqref{eq:mild-def}.

According to \eqref{eq:B_N-def}, the spaces $\mathscr{B}_N$ will consist of adapted processes  $u \in L^2(\Omega \times [0,T] \times \{1,...,r\} \times \mathbb{N})$ satisfying
\begin{equation}
  \Pro \left(\sum_{j=1}^\infty \sum_{n=1}^r \int_0^T |u_{n,j}(s)|^2ds \leq N \right) =1.
\end{equation}

For $N>0$ and $u \in \mathscr{B}_N$, the controlled processes $Y^{\e,u}_x = \mathscr{G}_x\left(\sqrt{\e}W + \int_0^\cdot u(s)ds \right)$ solves the integral equation
\begin{align} \label{eq:controlled-mild-w-isom}
  Y^{\e,u}_x(t) = &S(t)x + \int_0^t S(t-s)F(s,Y^{\e,u}_x(s))ds \nonumber\\
  &+ \sqrt{\e}\int_0^t S(t-s)R(s,Y^{\e,u}_x(s))dw(s)\nonumber\\
  &+ \int_0^t S(t-s)R(s,Y^{\e,u}_x(s))Q\mathcal{I}u(s)ds
\end{align}
where $\mathcal{I}: L^2( \{1,...,r\}\times \mathbb{N}) \to L^2(\mathcal{O} \times \{1,...,r\})$ is the isometry defined by
\[[\mathcal{I}u]_n(\xi): = \sum_{j=1}^\infty u_{n,j} f_{n,j}(\xi).\]
In the above expression, $f_{n,j}$ are the orthonormal basis defined in Assumption \ref{assum:noise}. The noise $w(t)$ is defined in Assumption \ref{assum:noise}.
$F$ and $R$ are the Nemytskii operators defined in \eqref{eq:Nemytskii-F} and \eqref{eq:Nemytskii-R}.

Because $\mathcal{I}$ is an isometry, we can equivalently define $\mathscr{A}_N: = \mathcal{I}(\mathscr{B}_N)$ to be the family of adapted $L^2([0,T]\times \mathcal{O}\times \{1,...r\})$ processes satisfying
\begin{equation} \label{eq:A_N-def}
  \Pro \left(\sum_{n=1}^r \int_0^T \int_{\mathcal{O}} |u_n(s,\xi)|^2d\xi ds \leq N\right) = 1.
\end{equation}
and then define $X^{\e,u}_x$ for $u \in \mathscr{A}_N$,
%{\color{red}
\begin{align} %\label{eq:controlled-mild}
  X^{\e,u}_x(t) = &S(t)x + \int_0^t S(t-s)F(s,X^{\e,u}_x(s))ds \nonumber\\
  &+ \sqrt{\e}\int_0^t S(t-s)R(s,X^{\e,u}_x(s))dw(s)\nonumber\\
  &+ \int_0^t S(t-s)R(s,X^{\e,u}_x(s))Qu(s)ds.
\end{align}
this agrees with \eqref{eq:controlled-mild}.
%}

The isometry between $\mathscr{B}_N$ and $\mathscr{A_N}$ and Theorem \ref{thm:ULDP-suff-cond-abstract} imply the following result that we will use to prove our three main results.
\begin{corollary} \label{cor:ULDP-suff-cond}
  Let $D$ be a  subset of $E$. If for $T>0$ and any $\delta>0$, and $N>0$,
  \begin{equation}
    \lim_{\e \to 0} \sup_{x \in D} \sup_{u \in \mathscr{B}_N} \Pro \left(  \left|X^{\e,u}_x-X^{0,u}_x \right|_{E_T}>\delta \right)=0,
  \end{equation}
  then the family $\{X^\e_x\}$ satisfies a ULDP in the $E_T$ norm uniformly over $D$ with respect to the rate functions $I_{x,T}: \mathcal{E} \to [0,+\infty]$ defined by
  \begin{equation} \label{eq:rate-fct}
    I_{x,T}(\varphi):= \inf \left\{\frac{1}{2}\sum_{n=1}^r \int_0^T \int_{\mathcal{O}}|u_n(s,\xi)|^2d \xi ds: \varphi = X^{0,u}_x\right\}
  \end{equation}
  where the infimum is taken over all $u \in L^2([0,T]\times \mathcal{O}\times \{1,...,r\})$.
\end{corollary}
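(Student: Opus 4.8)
The plan is to deduce this corollary directly from Theorem \ref{thm:ULDP-suff-cond-abstract} by making explicit the dictionary between the abstract Wiener-space setup and the reaction-diffusion setup. First I would set $\mathcal{E} := E_T$ with $\vartheta$ the metric induced by $|\cdot|_{E_T}$, and $\mathcal{E}_0 := E$. The driving noise in Assumption \ref{assum:noise} is, by construction, generated by the countable i.i.d. family $W = \{\{W_{n,j}\}_{j=1}^\infty\}_{n=1}^r$, which I would re-index as $\{W_k\}_{k=1}^\infty$ over the countable set $\{1,\dots,r\}\times\mathbb{N}$; then $C([0,T]\times\{1,\dots,r\}\times\mathbb{N})$ plays the role of $C([0,T]\times\mathbb{N})$. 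By Corollary \ref{cor:exist-uniq} (existence and uniqueness of mild solutions), the solution map $\mathscr{G}_x$ with $X^\e_x = \mathscr{G}_x(\sqrt{\e}W)$ is well-defined and measurable, so $\{X^\e_x\}$ is of the form $\{Y^\e_x\}$ required by Theorem \ref{thm:ULDP-suff-cond-abstract}.

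The key step is to identify the controlled processes. By the Girsanov/shift structure underlying the Budhiraja--Dupuis representation, for $u\in\mathscr{B}_N$ the process $Y^{\e,u}_x = \mathscr{G}_x(\sqrt{\e}W + \int_0^\cdot u(s)\,ds)$ solves equation \eqref{eq:controlled-mild-w-isom}, where the shift enters through the isometry $\mathcal{I}$ and the covariance operator $Q$: concretely, adding $\int_0^\cdot u_{n,j}(s)\,ds$ to $W_{n,j}$ contributes the drift term $\int_0^t S(t-s)R(s,Y^{\e,u}_x(s))Q\mathcal{I}u(s)\,ds$ because $dw_n$ expands as $\sum_j \lambda_{n,j} f_{n,j}\,dW_{n,j}$ and $Q_n f_{n,j} = \lambda_{n,j}f_{n,j}$. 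Since $\mathcal{I}$ is a linear isometry from $L^2(\{1,\dots,r\}\times\mathbb{N})$ onto $L^2(\mathcal{O}\times\{1,\dots,r\})$ (this uses that each $\{f_{n,j}\}_j$ is an orthonormal basis of $L^2(\mathcal{O})$), it maps $\mathscr{B}_N$ bijectively onto $\mathscr{A}_N$ with $\sum_{n,j}\int_0^T|u_{n,j}(s)|^2ds = \sum_n\int_0^T\int_{\mathcal{O}}|[\mathcal{I}u]_n(s,\xi)|^2d\xi ds$, so the $L^2$-ball constraint \eqref{eq:B_N-def} transfers to \eqref{eq:A_N-def}. Hence, writing $v = \mathcal{I}u$, we have $Y^{\e,u}_x = X^{\e,v}_x$ and $Y^{0,u}_x = X^{0,v}_x$, and the supremum over $u\in\mathscr{B}_N$ equals the supremum over $v\in\mathscr{A}_N$.

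With this identification the hypothesis of the corollary, $\lim_{\e\to0}\sup_{x\in D}\sup_{u\in\mathscr{B}_N}\Pro(|X^{\e,u}_x - X^{0,u}_x|_{E_T}>\delta) = 0$, becomes exactly the hypothesis $\lim_{\e\to0}\sup_{x\in D}\sup_{u\in\mathscr{B}_N}\Pro(\vartheta(Y^{\e,u}_x, Y^{0,u}_x)>\delta) = 0$ of Theorem \ref{thm:ULDP-suff-cond-abstract}. That theorem then yields the ULDP for $\{X^\e_x\}$ uniformly over $D$ with rate function $I_x(\varphi) = \inf\{\tfrac12\sum_{n,j}\int_0^T|u_{n,j}(s)|^2ds : \varphi = Y^{0,u}_x\}$; applying the isometry $\mathcal{I}$ inside the infimum (and noting $Y^{0,u}_x = X^{0,\mathcal{I}u}_x$) rewrites this as $I_{x,T}(\varphi)$ in \eqref{eq:rate-fct}. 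The main obstacle is not any deep estimate but the bookkeeping: one must verify carefully that the drift induced by the Girsanov shift is precisely $S(t-s)R(\cdot)Q\mathcal{I}u$ — i.e. that the covariance $Q$ and the isometry $\mathcal{I}$ compose correctly when the noise is written as a series of scalar Brownian motions — and that $\mathcal{I}$ is a genuine $L^2$-isometry onto (not merely into) so that the two control families and the two rate-function infima match exactly. Both points are already essentially assembled in the discussion preceding the corollary, so the proof is short.
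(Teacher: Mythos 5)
Your proposal is correct and follows essentially the same route as the paper: the paper itself derives this corollary from Theorem \ref{thm:ULDP-suff-cond-abstract} via the preceding discussion identifying $\mathcal{E}=E_T$, $\mathcal{E}_0=E$, the measurable solution map $\mathscr{G}_x$ (from Corollary \ref{cor:exist-uniq}), the controlled equation \eqref{eq:controlled-mild-w-isom}, and the isometry $\mathcal{I}$ carrying $\mathscr{B}_N$ onto $\mathscr{A}_N$ and matching the rate-function infima. Your bookkeeping of the Girsanov shift producing the drift $S(t-s)R(\cdot)Q\mathcal{I}u$ and of the surjectivity of $\mathcal{I}$ is exactly the content the paper relies on.
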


\section{Main results} \label{S:main}

The first main result of this paper proves that under Assumptions \ref{assum:vector-field}, \ref{assum:sigma}, \ref{assum:elliptic-operators}, and \ref{assum:noise}, the mild solutions $X^\e_x$ satisfy a large deviations principle that is uniform over bounded subsets of initial data $x$. This result generalizes the result of Cerrai and R\"ockner \cite{cr-2004} by removing the restrictions to locally Lipschitz continuity and polynomial growth of the reaction term $f$. For these results, recall Definition \ref{def:ULDP} of the ULDP and the definitions of the rate function $I_{x,T}$ \eqref{eq:rate-fct} and define the level sets for $s\geq 0$,
\begin{equation} \label{eq:level-sets}
  \Phi_{x,T}(s) := \left\{\varphi \in E_T : I_{x,T}(\varphi)\leq s \right\}.
\end{equation}

\begin{theorem} \label{thm:ULDP-bounded-subsets}
  Assume Assumptions \ref{assum:vector-field}, \ref{assum:sigma}, \ref{assum:elliptic-operators}, and \ref{assum:noise}. For any fixed $T>0$, $X^\e_x$ satisfy a large deviations principle in $E_T$ that is uniform over bounded subsets of initial data. In particular, for any $K>0$, any $\delta>0$, and any $s_0\geq 0$,
  \begin{equation} \label{eq:ULDP-lower-bounded-x}
    \liminf_{\e \to 0} \inf_{|x|_E \leq K} \inf_{\varphi \in \Phi_{x,T}(s_0)} \left( \e \log \Pro \left(\left| X^\e_x - \varphi \right|_{E_T} <\delta \right) + I_{x,T}(\varphi) \right) \geq 0
  \end{equation}
  and
  \begin{equation}
    \limsup_{\e \to 0} \sup_{|x|_E \leq K} \sup_{s \in [0,s_0]} \left(\e \log \Pro\left(\dist_{E_T}\left(X^\e_x, \Phi_{x,T}(s) \right) \geq \delta \right) + s \right) \leq 0.
  \end{equation}
\end{theorem}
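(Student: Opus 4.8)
The plan is to invoke Corollary \ref{cor:ULDP-suff-cond} with $D = \{x \in E : |x|_E \leq K\}$, which reduces the entire large deviations statement to a single convergence-in-probability claim: for every $T, \delta, N > 0$,
\[
  \lim_{\e \to 0} \sup_{|x|_E \leq K} \sup_{u \in \mathscr{A}_N} \Pro\left( \left| X^{\e,u}_x - X^{0,u}_x \right|_{E_T} > \delta \right) = 0.
\]
So the real work is estimating the difference between a controlled stochastic equation and its deterministic (zero-noise) counterpart, uniformly over the bounded ball of initial data and over the bounded-energy controls. The first step I would take is to record the mild-form identity for the difference $V^{\e,u}_x := X^{\e,u}_x - X^{0,u}_x$: it satisfies
\[
  V^{\e,u}_x(t) = \int_0^t S(t-s)\bigl(F(s,X^{\e,u}_x(s)) - F(s,X^{0,u}_x(s))\bigr)ds + \Gamma^{\e,u}(t) + \int_0^t S(t-s)\bigl(R(s,X^{\e,u}_x(s)) - R(s,X^{0,u}_x(s))\bigr)Qu(s)ds,
\]
where $\Gamma^{\e,u}(t) = \sqrt{\e}\int_0^t S(t-s)R(s,X^{\e,u}_x(s))dw(s)$ is the stochastic convolution term, the only genuinely random piece.

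The key technical idea, flagged in the introduction and developed in Section \ref{S:M}, is to \emph{not} treat the dissipative part $g$ directly (it is not locally Lipschitz) but instead to pass to the solution mapping $\mathcal{M}$, which absorbs the semigroup and the decreasing nonlinearity $g$ and is shown there to be globally Lipschitz on $E_T$. Concretely, I would write $X^{\e,u}_x$ and $X^{0,u}_x$ as fixed points of $\mathcal{M}$ applied to the respective ``forcing'' data (the Lipschitz term $h$, the control term $R(\cdot)Qu$, and, in the stochastic case, the convolution $\Gamma^{\e,u}$). Subtracting and using the global Lipschitz bound for $\mathcal{M}$ gives
\[
  \left| V^{\e,u}_x \right|_{E_T} \leq C_T \left( \left| \Gamma^{\e,u} \right|_{E_T} + \int_0^T L(T)\bigl(1 + \text{growth}\bigr)\left| V^{\e,u}_x(s) \right|_E \, ds + \text{Lipschitz-in-}h \text{ and control terms}\right),
\]
and a Gronwall argument (with the constant depending on $N$ through the control energy but \emph{not} on $x$, since $\mathcal{M}$'s Lipschitz constant is initial-data-independent) reduces everything to controlling $\left| \Gamma^{\e,u} \right|_{E_T}$. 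For the control-term contribution one uses that $\sup_{u\in\mathscr{A}_N}$ of the relevant integral operator norm is finite (Cauchy--Schwarz in time against the energy bound $N$) together with the Lipschitz bound \eqref{eq:sigma-Lip} on $\sigma$.

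The main obstacle — and the place where the argument genuinely uses the structure of \eqref{eq:intro-SPDE} rather than soft functional analysis — is showing $\left| \Gamma^{\e,u} \right|_{E_T} \to 0$ in probability uniformly over $|x|_E \leq K$ and $u \in \mathscr{A}_N$. This requires the sharp regularity estimates on the stochastic convolution due to Cerrai (Appendix \ref{S:stoch-conv}), which under Assumption \ref{assum:noise} give moment bounds $\E |\Gamma^{\e,u}|_{E_T}^p \leq C_p \e^{p/2}(1 + \E\sup_{t\le T}|X^{\e,u}_x(t)|_E^p)$, combined with an a priori moment bound on $\sup_t |X^{\e,u}_x(t)|_E$ that is \emph{uniform over $|x|_E \le K$ and $u \in \mathscr{A}_N$}; the latter bound I would establish first by a standard energy/Gronwall estimate on \eqref{eq:controlled-mild}, using the dissipativity of $g$ (so its contribution has a favorable sign), the linear growth \eqref{eq:h-lin-growth} of $h$, and the linear growth \eqref{eq:sigma-lin-growth} of $\sigma$ together with $|Qu|_{L^2} \le \|Q\| \sqrt{N}$ for $u \in \mathscr{A}_N$. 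Once $\E|\Gamma^{\e,u}|_{E_T}^p \le C(K,N,p,T)\,\e^{p/2}$ is in hand, Chebyshev and the Gronwall bound for $|V^{\e,u}_x|_{E_T}$ give the required uniform convergence in probability, and Corollary \ref{cor:ULDP-suff-cond} then yields both the lower bound \eqref{eq:ULDP-lower-bounded-x} and the upper bound, completing the proof.
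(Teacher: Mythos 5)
Your proposal follows essentially the same route as the paper: reduce via Corollary \ref{cor:ULDP-suff-cond} to uniform convergence in probability of $X^{\e,u}_x$ to $X^{0,u}_x$, exploit the global Lipschitz continuity of $\mathcal{M}$ from Section \ref{S:M} to bound the difference by the control-term difference plus $\sqrt{\e}$ times the stochastic convolution, estimate the former with Theorem \ref{thm:control-Y-bounds} and the Lipschitz bound on $\sigma$, estimate the latter with the Cerrai convolution bounds together with an a priori moment bound uniform over $|x|_E\leq K$ and $u\in\mathscr{A}_N$, and close with Gr\"onwall and Chebyshev. The only cosmetic difference is that the paper obtains the a priori bound (Theorem \ref{thm:control-a-priori}) directly from the Lipschitz property of $\mathcal{M}$ rather than a separate energy estimate using the sign of $g$, but this is an equivalent subroutine and does not change the argument.
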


The proof of Theorem \ref{thm:ULDP-bounded-subsets} is in Section \ref{S:ULDP-bounded-x}.

The next result shows that if we restrict the multiplicative noise coefficients $\sigma_{in}$ to be uniformly bounded, then the large deviations principle actually holds uniformly over \textit{unbounded} subsets of initial data.
We continue to assume Assumptions \ref{assum:vector-field}, \ref{assum:elliptic-operators}, and \ref{assum:noise} and we add the following strengthening of Assumption \ref{assum:sigma}.

\begin{assumption}[Bounded $\sigma$] \label{assum:sigma-bounded}
  %The functions $\sigma_{ij}: [0,+\infty)\times \mathcal{O}\times \mathbb{R}^r \to \mathbb{R}$ are Lipschitz continuous in the third variable and are bounded uniformly on finite time intervals.
  There exists an increasing function $L:[0,+\infty) \to [0,+\infty)$ such that for all $x,y  \in \mathbb{R}^r$,
  \begin{equation} \label{eq:sigma-Lip-bounded}
    \sup_{i,j \in \{1,...,r\}} \sup_{s \in [0,t]} \sup_{\xi \in \mathcal{O}} |\sigma_{ij}(s,\xi,x) - \sigma_{ij}(s,\xi,y)| \leq L(t) \sup_{i \in \{1,...,r\}}|x_i-y_i|
  \end{equation}
  and
  \begin{equation} \label{eq:sigma-bounded}
    \sup_{i,j \in \{1,...,r\}} \sup_{s \in [0,t]} \sup_{\xi \in \mathcal{O}} \sup_{x \in \mathbb{R}^r} |\sigma_{ij}(s,\xi,x) | \leq L(t).
  \end{equation}
\end{assumption}

\begin{theorem} \label{thm:ULDP-sigma-bounded}
  Assume Assumptions \ref{assum:vector-field}, \ref{assum:elliptic-operators}, \ref{assum:noise}, and \ref{assum:sigma-bounded}.
  For any fixed $T>0$, $X^\e_x$ satisfy a large deviations principle in $E_T$ that is uniform over all initial conditions in $E$. In particular, for any $\delta>0, s_0 \geq 0$,
  \begin{equation} %\label{eq:LDP-lower}
    \liminf_{\e \to 0}\inf_{x \in E} \inf_{\varphi \in \Phi_x(s_0)} \left(  \e \log \Pro \left(|X^\e_x - \varphi|_{E_T}<\delta \right)  + I_x(\varphi)  \right) \geq 0,
  \end{equation}
  and
  \begin{equation} %\label{eq:LDP-upper}
    \limsup_{\e \to 0} \sup_{x \in E} \sup_{s \in [0,s_0]} \left(\e \log \Pro \left(\dist_{E_T}(X^\e_x, \Phi_x(s)) \geq \delta \right) + s \right) \leq 0.
  \end{equation}
\end{theorem}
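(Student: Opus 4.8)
The plan is to invoke Corollary \ref{cor:ULDP-suff-cond} with $D = E$, so that everything reduces to showing that for every $T>0$, $\delta>0$, and $N>0$,
\[
  \lim_{\e \to 0} \sup_{x \in E} \sup_{u \in \mathscr{B}_N} \Pro\left(\left|X^{\e,u}_x - X^{0,u}_x\right|_{E_T} > \delta\right) = 0.
\]
Write $Z^\e := X^{\e,u}_x - X^{0,u}_x$. Subtracting the two mild equations \eqref{eq:controlled-mild}, the initial data terms $S(t)x$ cancel exactly (this is the crucial point that kills the dependence on $|x|_E$), leaving
\[
  Z^\e(t) = \int_0^t S(t-s)\left(F(s,X^{\e,u}_x(s)) - F(s,X^{0,u}_x(s))\right)ds + \sqrt{\e}\,\Gamma^{\e,u}_x(t) + \int_0^t S(t-s)\left(R(s,X^{\e,u}_x(s)) - R(s,X^{0,u}_x(s))\right)Qu(s)\,ds,
\]
where $\Gamma^{\e,u}_x$ is the stochastic convolution $\int_0^\cdot S(t-s)R(s,X^{\e,u}_x(s))dw(s)$. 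First I would handle the nonlinear drift term using the machinery of Section \ref{S:M}: the monotonicity decomposition $F = F_g + F_h$ with $F_g$ dissipative and $F_h$ globally Lipschitz means that the solution map $\mathcal M$ (which solves $\partial_t y = \mathcal A y + F(t,y) + \psi$ in mild form) is globally Lipschitz from the forcing $\psi$ to the solution in $E_T$. Apply $\mathcal M$ with the two forcings $\psi^\e := \sqrt\e\, dw$-convolution-part $+ \int_0^\cdot S(\cdot-s)R(s,X^{\e,u}_x(s))Qu(s)ds$ and $\psi^0 := \int_0^\cdot S(\cdot-s)R(s,X^{0,u}_x(s))Qu(s)ds$. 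Then
\[
  \left|Z^\e\right|_{E_T} \leq C_T\left|\psi^\e - \psi^0\right|_{E_T} \leq C_T\left(\sqrt\e\,\left|\Gamma^{\e,u}_x\right|_{E_T} + \left|\int_0^\cdot S(\cdot-s)\left(R(s,X^{\e,u}_x(s)) - R(s,X^{0,u}_x(s))\right)Qu(s)ds\right|_{E_T}\right),
\]
with $C_T$ depending only on $L(T)$ and $T$, \emph{not} on $x$.

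The two remaining terms must be bounded uniformly in $x$, and this is where Assumption \ref{assum:sigma-bounded} does the work. For the stochastic convolution $\sqrt\e\,\Gamma^{\e,u}_x$: since $|\sigma_{in}| \leq L(T)$ uniformly, the factorization/Da Prato--Kwapień--Zabczyk estimates recalled in Appendix \ref{S:stoch-conv} give a bound on $\E|\Gamma^{\e,u}_x|_{E_T}^p$ (for suitable $p$ governed by $\beta, \rho$ in Assumption \ref{assum:noise}) that depends only on $\sup|\sigma|$ and hence is uniform over $x \in E$ and $u \in \mathscr{B}_N$; multiplying by $\sqrt\e$ makes this term vanish as $\e \to 0$, and Markov's inequality converts the moment bound into the probability bound we need. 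For the control-difference term: the Lipschitz bound \eqref{eq:sigma-Lip-bounded} gives $\|R(s,X^{\e,u}_x(s)) - R(s,X^{0,u}_x(s))\|_{\mathscr L(L^2)} \leq L(T)|Z^\e(s)|_E$ pointwise, and since $u \in \mathscr{B}_N$ forces $|Qu|_{L^2([0,T]\times\mathcal O\times\{1,\dots,r\})} \leq \|Q\|\sqrt N$, one estimates $\int_0^t S(t-s)(\cdots)Qu(s)ds$ in $E_T$ via the smoothing of $S(t)$ together with Hölder/Young in time, obtaining something like $C_T(N)\,\sup_{s\le t}|Z^\e(s)|_E \cdot (\text{small power of }t)$, which for small enough time intervals can be absorbed into the left side; a standard iteration over $[0,T]$ in steps of fixed small length then propagates the estimate to all of $[0,T]$. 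Combining: $|Z^\e|_{E_T} \leq C_T(N)\,\sqrt\e\,|\Gamma^{\e,u}_x|_{E_T}$ modulo the absorbed terms, and taking $\e \to 0$ finishes the argument.

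The main obstacle I anticipate is controlling the control-difference term $\int_0^\cdot S(\cdot-s)(R(s,X^{\e,u}_x(s)) - R(s,X^{0,u}_x(s)))Qu(s)ds$ in the \emph{supremum} ($E_T$) norm rather than an $L^2$-in-space norm: because $u$ is only $L^2$ in time and space, one cannot pull $|Z^\e(s)|_E$ out of the integral naively, and one must exploit the $\tilde E$-smoothing of $S(t-s)$ (i.e. $|S(t-s)\phi|_{\tilde E} \leq C(t-s)^{-\gamma}|\phi|_{L^p}$ for appropriate $\gamma < 1$, $p$) combined with a Hölder inequality in $s$ against $|Qu(\cdot)|_{L^2}$, and verify the resulting time-singularity is integrable — this is exactly the kind of estimate where the parameters $\beta \in (0,1)$, $\rho \in [2,\infty]$ and the relation \eqref{eq:beta-rho-relation} must be used carefully. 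Once that interpolation-type bound is in place, the dissipativity of $g$ (via the global Lipschitz property of $\mathcal M$) and the uniform boundedness of $\sigma$ make the $x$-uniformity essentially automatic, since no term in the final estimate retains any dependence on the initial datum.
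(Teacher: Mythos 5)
Your proposal follows essentially the same route as the paper: reduce via Corollary \ref{cor:ULDP-suff-cond} to showing $\sup_{x\in E}\sup_{u\in\mathscr{A}_N}\Pro(|X^{\e,u}_x-X^{0,u}_x|_{E_T}>\delta)\to 0$, use the global Lipschitz continuity of $\mathcal{M}$ (Theorem \ref{thm:M-Lipschitz}) so the $S(\cdot)x$ term drops out of the difference, bound the stochastic convolution uniformly in $x$ by combining Theorem \ref{thm:Z-bound} with the uniform bound \eqref{eq:sigma-bounded} on $\sigma$, and handle the $Qu$-convolution of the $R$-difference in the supremum norm exactly as in Theorem \ref{thm:control-Y-bounds} -- which is precisely the estimate you flagged as the main obstacle. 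The only difference is cosmetic: the paper closes the estimate by taking $p$-th moments (with $p>2/(1-\beta(\rho-2)/\rho)$, via H\"older) and applying Gr\"onwall's inequality, rather than your pathwise absorption over short time intervals followed by iteration, and then concludes with Chebyshev as you do.
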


The proof of Theorem \ref{thm:ULDP-sigma-bounded} is in Section \ref{S:sigma-bounded}.

The third main result identifies a sufficient condition that implies that the large deviations principle holds uniformly over all initial data even when $\sigma$ is unbounded. The result requires the reaction term $f$ to feature sufficiently strong superlinearly dissipativity to counteract the expansive effects of the unbounded $\sigma$. Specifically we assume the following.

\begin{assumption}[Super-linear dissipativity] \label{assum:super-dissip}
  The reaction term $f$ can be written as $f_i=g_i + h_i$ where $g_i$ and $h_i$ satisfy Assumption \ref{assum:vector-field}. Additionally, there exists $m>1$ (not necessarily an integer), $\mu>0$, and $c_0>0$ such that for any $i \in \{1,...,r\}$, $t>0$,  $\xi \in \bar{\mathcal{O}}$, and $|v_i| >c_0$,
  \begin{equation} \label{eq:g-super-dissip}
    g_i(t,\xi,v_i) \sgn(v_i) \leq - \mu |v_i|^m.
  \end{equation}

  We further assume  that there exists an increasing function $L:[0,+\infty) \to [0,+\infty)$ such that for any $x, y\in \mathbb{R}^r$,
  \begin{equation} \label{eq:sigma-Lip-super-dissip}
    \sup_{i,j \in \{1,...,r\}} \sup_{s \in [0,t]} \sup_{\xi \in \mathcal{O}} |\sigma_{ij}(s,\xi,x) - \sigma_{ij}(s,\xi,y)| \leq L(t) \sup_{i \in \{1,...,r\}} |x_i-y_i|.
  \end{equation}
  and that there exists
  \begin{equation} \label{eq:nu}
    \nu \in \left[ 0, \frac{m-1}{2} \left(1 - \frac{\beta(\rho-2)}{\rho} \right) \right) \cap [0,1]
  \end{equation}
  such that for any $x \in \mathbb{R}^r$,
  \begin{equation} \label{eq:sigma-growth-super-dissip}
  \sup_{i,n \in \{1,...,r\}}\sup_{\xi \in \mathcal{O}}\sup_{t \in [0,T]}|\sigma_{in}(t,\xi,x)| \leq L(T) \left(1 + \sup_{i \in \{1,...,r\}}|x_i|\right)^{\nu}.  \end{equation}
\end{assumption}

%
%\begin{remark}
%  In Hypothesis 4(iii)(b) of \cite{cr-2004}, the assumption that $g$ features $m$-degree super-dissipativity is coupled with an assumption that $\sigma$ grows at a rate of at most $\frac{1}{m}$. Under Hypothesis 4(iii)(b) of \cite{cr-2004}, therefore, higher degrees of dissipativity require stronger restrictions on the growth rate of $\sigma$.
%
%  Our Assumption \ref{assum:super-dissip} features the opposite relationship between the degree of super-dissipativity $m$ and the growth rate of $\sigma$, $\nu$. As the super-dissipativity exponent $m$ grows, the growth rate $\nu$ is allowed to grow as well. Furthermore, if $\frac{m-1}{2} \left(1 - \frac{\beta(\rho-2)}{\rho}\right)> 1$, then $\nu$ can be $1$ and $\sigma$ can grow linearly.
%
%  Hypothesis 4(iii)(b) of \cite{cr-2004} was assumed to ensure the well-posedness of the mild solutions to the stochastic reaction-diffusion equation, but using different arguments we showed in Theorem \ref{thm:exist-uniq} above that the solutions are well-posed whenever $\sigma$ grows linearly and a restriction to $\frac{1}{m}$ growth is unnecessary.
%\end{remark}

\begin{theorem} \label{thm:ULDP-super-dissip}
  Assume Assumptions \ref{assum:vector-field},  \ref{assum:elliptic-operators}, \ref{assum:noise}, and \ref{assum:super-dissip}. For any fixed $T>0$
  $X^\e_x$ satisfy a large deviations principle in $E_T$ that is uniform over all initial conditions in $E$. In particular, for any $\delta>0, s_0 \geq 0$,
  \begin{equation} \label{eq:LDP-lower}
    \liminf_{\e \to 0}\inf_{x \in E} \inf_{\varphi \in \Phi_x(s_0)} \left(  \e \log \Pro \left(|X^\e_x - \varphi|_{E_T}<\delta \right)  + I_{x,T}(\varphi)  \right) \geq 0,
  \end{equation}
  \begin{equation} \label{eq:LDP-upper}
    \limsup_{\e \to 0} \sup_{x \in E} \sup_{s \in [0,s_0]} \left(\e \log \Pro \left(\dist_{E_T}(X^\e_x, \Phi_{x,T}(s)) \geq \delta \right) + s \right) \leq 0.
  \end{equation}
\end{theorem}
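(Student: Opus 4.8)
The plan is to apply Corollary~\ref{cor:ULDP-suff-cond} with $D=E$: it suffices to show that for every $T>0$, $\delta>0$ and $N>0$,
\begin{equation*}
\lim_{\e\to0}\ \sup_{x\in E}\ \sup_{u\in\mathscr{B}_N}\ \Pro\Big(\big|X^{\e,u}_x-X^{0,u}_x\big|_{E_T}>\delta\Big)=0,
\end{equation*}
and I would in fact prove the stronger quantitative statement that for each $p\geq1$ there is $C=C(p,N,T)$ with $\E\big|X^{\e,u}_x-X^{0,u}_x\big|_{E_T}^{\,p}\leq C\,\e^{p/2}$ for all $\e\in(0,1]$, $x\in E$ and $u\in\mathscr{B}_N$; the displayed limit then follows by Chebyshev's inequality. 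Via the isometry between $\mathscr{B}_N$ and $\mathscr{A}_N$ I work with the mild formulation of $X^{\e,u}_x$ for $u\in\mathscr{A}_N$.

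The first and most important step is to record \emph{initial-data-uniform} a priori bounds for the controlled processes, using the super-linear dissipativity \eqref{eq:g-super-dissip} and the estimates collected in Appendix~\ref{S:unif-bounds}. Comparison with the scalar inequality $\dot v\leq -\mu|v|^m+\text{const}$, whose solution is bounded by $(\mu(m-1)t)^{-1/(m-1)}+\text{const}$ \emph{regardless of} $v(0)$, should yield, for each $p\geq1$, a deterministic $x$-independent function $\phi_p:(0,T]\to[0,\infty)$ with $\phi_p(t)\sim t^{-1/(m-1)}$ near $0$ and a nonnegative random variable $\Xi^{\e,u}_x$ such that
\begin{equation*}
\big|X^{\e,u}_x(t)\big|_E\leq \phi_p(t)+\Xi^{\e,u}_x\ \text{ for }t\in(0,T],\qquad
\sup_{\e\in[0,1]}\ \sup_{x\in E}\ \sup_{u\in\mathscr{A}_N}\ \E\big(\Xi^{\e,u}_x\big)^{p}<\infty,
\end{equation*}
and the same for $X^{0,u}_x$ (take $\e=0$). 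The role of the restriction \eqref{eq:nu} on the growth exponent $\nu$ is twofold: the factor $\tfrac{m-1}{2}$ is what makes the bootstrap closing this a priori bound work (the dissipative term $|X|^m$ must beat the $|X|^{2\nu}$-driven noise feedback), and the factor $1-\tfrac{\beta(\rho-2)}{\rho}$ provides exactly the room needed so that the weakly singular integrals $\int_0^T(t-s)^{-\theta}(1+\phi_p(s))^{2\nu}\,ds$ are finite, where $\theta\in(0,1)$ is the smoothing exponent of $S(\cdot)$ from $L^2(\mathcal{O})$ into $\tilde E$ determined by $\beta,\rho$ through Assumption~\ref{assum:noise}; indeed $\tfrac{2\nu}{m-1}<1-\tfrac{\beta(\rho-2)}{\rho}<1$.

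With these bounds in hand, I would estimate the stochastic convolution $\Psi^{\e,u}_x(t):=\sqrt{\e}\int_0^tS(t-s)R(s,X^{\e,u}_x(s))\,dw(s)$: feeding the growth bound \eqref{eq:sigma-growth-super-dissip} and the pointwise bound on $|X^{\e,u}_x(s)|_E$ into Cerrai's stochastic-convolution estimates (Appendix~\ref{S:stoch-conv}) gives $\E\big|\Psi^{\e,u}_x\big|_{E_T}^{\,p}\leq C(p,N,T)\,\e^{p/2}$, \emph{uniformly} in $x\in E$ and $u\in\mathscr{A}_N$ --- this is the step that fails for merely linearly growing $\sigma$ and is rescued here by super-linear dissipativity. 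Then I close the difference estimate: both $X^{\e,u}_x$ and $X^{0,u}_x$ equal the mapping $\mathcal{M}$ of Section~\ref{S:M} evaluated on data comprising the common initial condition $x$, the terms $F_h(\cdot,\cdot)$ and $R(\cdot,\cdot)Qu$ along the respective solutions, and (for $\e>0$) the convolution $\Psi^{\e,u}_x$. Using the global Lipschitz continuity of $\mathcal{M}$ (which needs only that $g$ is decreasing, not locally Lipschitz), the Lipschitz bounds \eqref{eq:h-Lip-assum}, \eqref{eq:sigma-Lip-super-dissip}, the contraction property of $S(\cdot)$ on $E$, the bound $\int_0^T\big|Qu(s)\big|_{L^2(\mathcal{O}\times\{1,...,r\})}^2\,ds\leq\|Q\|^2N$, and a singular Gronwall argument (Cauchy--Schwarz to tame the weakly singular heat-kernel factor in the control term --- passing through intermediate $L^\rho(\mathcal{O})$ spaces if the dimension $d$ requires --- then Henry's lemma), one obtains $\big|X^{\e,u}_x-X^{0,u}_x\big|_{E_T}\leq C(N,T)\,\big|\Psi^{\e,u}_x\big|_{E_T}$. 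Taking $p$-th moments and inserting the bound on $\Psi^{\e,u}_x$ finishes the argument.

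The hard part is the combination of the first two steps. The difference estimate is essentially the same as in the bounded-$\sigma$ case of Theorem~\ref{thm:ULDP-sigma-bounded}; what is genuinely new is obtaining moment bounds on $|X^{\e,u}_x(t)|_E$ that are uniform over \emph{all} of $E$, and then showing that the stochastic convolution built on the possibly very large values $\sigma(\cdot,X^{\e,u}_x)$ is still $O(\sqrt{\e})$ uniformly in $x$. This requires pushing the ``coming down from infinity'' estimate through the controlled, noisy equation (a bootstrap that closes only because the dissipativity exponent $m$ dominates the noise growth $\nu$) and a careful bookkeeping of $\nu$ against the time singularity of the heat semigroup --- exactly the balance encoded in \eqref{eq:nu}. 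I expect most of Section~\ref{S:super-dissip} to be devoted to these bounds.
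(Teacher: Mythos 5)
Your proposal is correct and follows essentially the same route as the paper: reduction via Corollary \ref{cor:ULDP-suff-cond}, the $x$-independent ``coming down from infinity'' bound $|\mathcal{M}_x(z)(t)|_E\leq C_t(1+t^{-1/(m-1)}+|z|_{E_t})$ from Appendix \ref{S:unif-bounds} fed into the stochastic-convolution and control estimates of Appendix \ref{S:stoch-conv}, a Gr\"onwall bootstrap (closing exactly because of \eqref{eq:nu}, which makes the Beta-function integral with the $s^{-2\nu/(m-1)}$ singularity finite) to get $\E|Z^{\e,u}_x|_{E_T}^p$ bounded uniformly in $x\in E$ and $u\in\mathscr{A}_N$, and then the Lipschitz continuity of $\mathcal{M}$ plus the Lipschitz bound on $\sigma$, Gr\"onwall, and Chebyshev to conclude. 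The only cosmetic differences are bookkeeping (the paper bounds $\E|Y^{\e,u}_x+\sqrt{\e}Z^{\e,u}_x|_{E_T}^p$ directly rather than introducing a separate $\Xi^{\e,u}_x$, and applies Gr\"onwall after taking expectations), not a different argument.
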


The proof of Theorem \ref{thm:ULDP-super-dissip} is in Section \ref{S:super-dissip}.

\section{Example: System of stochastic reaction-diffusion equations exposed to space-time white noise in spatial dimension $d=1$} \label{S:example}
We consider a class of reaction-diffusion equations with polynomially dissipative forcing and polynomially growing multiplicative noise term in spatial dimension $d=1$. For simplicity, we only consider one equation ($r=1$), rather than a system of equations. Let $m\geq0$ and $\nu \leq 1$.  $m$ does not need to be an integer. Let $X^\e_x(t,\xi)$ be the  mild solution to
\begin{equation}
  \begin{cases}
    \frac{\partial}{\partial t} X^\e_{x}(t,\xi)  = \frac{\partial^2}{\partial \xi^2} X^\e_{x}(t,\xi) -|X^\e_x(t,\xi)|^m\sgn(X^\e_x(t,\xi))   \\
    \hspace{2.5cm}+\sqrt{\e} \left(1+|X^\e_x(t,\xi)|\right)^\nu\frac{\partial w}{\partial t}(t,\xi)\\
    X^\e_x(0,\xi) = x(\xi)\\
    X^\e_x(t,0) = X^\e_x(t,\pi) = 0
  \end{cases}
\end{equation}
defined on the one-dimensional spatial domain $\mathcal{O} = (0, \pi)$. $\frac{\partial w}{\partial t}$ is a space-time white noise.
%
%The nonlinearity $h$ has the property that there exists $p \in (0,m)$ and $C>0$ such that
%\[|\phi(x)| \leq C (1 + |x|^p).\]
%We check that $f(x) = -|x|^m\sgn(x) + \tilde{h}(x)$ satisfies Assumptions \ref{assum:vector-field} and \ref{assum:super-dissip}. This generalizes the properties of odd-degree polynomials with negative leading term to any degree $m>1$.
%
%\begin{proposition}
%  There exists $\mu>0$ and $C>0$ such that for any $x,y \in \mathbb{R}$,
%  \begin{equation}  \label{eq:example-super}
%    (-|x+y|^m \sgn(x+y) +|x|^m\sgn(x) +\phi(x+y) - \phi(x))\sgn(y) \leq -\mu|y|^m + C(1 + |x|^p).
%  \end{equation}
%\end{proposition}
%
%\begin{proof}
%  Without loss of generality let $y>0$ and $x \in \mathbb{R}$.
%  \begin{align*}
%    &-|x+y|^m\sgn(x+y) +|x|^m\sgn(x)\\
%    &= -\int_0^y m|x+t|^{m-1}dt\\
%    &\leq - \int_{-\frac{y}{2}}^{\frac{y}{2}} m|t|^{m-1}dt\\
%    &\leq -2\mu y^m
%  \end{align*}
%  for some $\mu>0$.
%
%  Furthermore, from the growth rate on $h$, we get that
%  \[|h(x+y) - h(x)| \leq |h(x+y)| + |h(x)| \leq C(1 + |x+y|^p + |x|^p) \leq C(1 + |x|^p + |y|^p).\]
%  By Young's inequality,
%  \[|h(x+y) - h(x)| \leq C( 1+ |x|^p) + \frac{\mu|y|^m}{2}.\]
%  These calculation show that
%  \[(-|x+y|^m \sgn(x+y) +|x|^m\sgn(x) +\tilde h(x+y) - \tilde h(y))\sgn(y) \leq -\mu|y|^m + C(1 + |x|^p).\]
%\end{proof}
%
%\begin{proposition}
%  There exists $K>0$ such that
%  \[g(x) = -|x|^m \sgn(x) + \phi(x) - Kx\]
%  is decreasing and
%  \[x \mapsto Kx\]
%  is Lipschitz continuous.
%\end{proposition}
%\begin{proof}
%  
%\end{proof}

In this spatial dimension $d=1$ setting, the eigenvalues of $\frac{\partial^2}{\partial \xi^2}$ are $-\alpha_k$ where $\alpha_{k} = k^2$. Because $\frac{\partial w}{\partial t}$ is a space-time white noise, $\lambda_{j}\equiv 1$. These sequences satisfy Assumption \ref{assum:noise} with $\rho=+\infty$ and any $\beta \in \left( \frac{1}{2}, 1 \right)$.

For any $m \geq 0$, the function $g(x) = -|x|^m \sgn(x)$ is decreasing. For any $\nu \leq 1$, $\sigma(x):=(1+|x|)^\nu$ is Lipschitz continuous. Therefore, if $m \geq 0$ and $\nu\leq 1$, Theorem \ref{thm:ULDP-bounded-subsets} guarantees that $X^\e_x$ satisfies a ULDP that is uniform over bounded subsets of initial data.

If $\nu\leq 0$ (the case of bounded noise coefficients) and $m\geq 0$, then Theorem \ref{thm:ULDP-sigma-bounded} guarantees that the system satisfies a uniform large deviations principle that is uniform over all $E$-valued initial data.

When $\nu$ satisfies
\begin{equation}
  \nu < \frac{(m-1)(1 - \beta)}{2} < \frac{m-1}{4} \ \ \ \ \  \text{ and } \ \ \ \ \  \nu \leq 1 ,
\end{equation}
Theorem \ref{thm:ULDP-super-dissip} guarantees that $\{X^\e_x\}$ will satisfy a large deviations principle that is uniform over all $E$-valued initial data.
The restriction  $\nu\leq 1$ is required because $\sigma(x) = (1 + |x|)^\nu$ fails to be globally Lipschitz continuous if $\nu>1$.

If $m=3$ and $\nu< \frac{1}{2}$,  then Theorem \ref{thm:ULDP-super-dissip} guarantees that $X^\e_x$ will satisfy a uniform large deviations principle that is uniform over all continuous initial data. If $m=5$ and $\nu<1$, then the large deviations principle will hold uniformly over all $E$-valued data.
If $m>5$ and $\nu\leq 1$ then $X^\e_x$ will satisfy a uniform large deviations principle that is uniform over all $E$-valued initial conditions.

%More generally, if the forcing term $-|x|^m\sgn(x)$ is replaced by any $m$th degree polynomial where $m$ is an odd integer and $m\geq 5$, and the multiplicative noise coefficient is locally Lipschitz continuous with linear growth, then the uniform large deviations principle holds uniformly over all initial data.

%The main results of this paper prove that under very general assumptions, $\{X^\e_x\}$ satisfies a ULDP that is uniform over arbitrary subsets of $E = C(\mathcal{O}\times\{1,..,r\})$. This means that for the rest of the paper we will take $\mathscr{C}$ to be the collection of all subsets of $E$ in Definitions \ref{def:ULDP} and \ref{def:EULP}. In fact, to prove our results it will be enough to show that for any $N>0$ and $\delta>0$,
%\begin{equation}
%  \lim_{\e \to 0} \sup_{u \in \mathscr{A}_N}\sup_{x \in E} \Pro \left(|X^{\e,u}_x - X^{0,u}_x|_{E_T}>\delta \right) = 0.
%\end{equation}

\section{Lipschitz continuity of $\mathcal{M}$} \label{S:M}

In order to prove Theorems \ref{thm:ULDP-bounded-subsets}, \ref{thm:ULDP-sigma-bounded}, and \ref{thm:ULDP-super-dissip}, and even to prove that the mild solutions to \eqref{eq:mild-def} and \eqref{eq:controlled-mild} are well defined, we define a mapping $\mathcal{M}: E_T \to E_T$ that sends an element $z \in E_T$ to the fixed point solution
\begin{equation} \label{eq:M-def}
 \mathcal{M}(z)(t): = \int_0^t S(t-s) F(s,\mathcal{M}(z)(s))ds + z(t).
\end{equation}

In this section we prove that $\mathcal{M}$ is well-defined and globally Lipschitz continuous whenever $f$ satisfies Assumption \ref{assum:vector-field}, even if $f$  fails to be locally Lipschitz continuous.

The mapping $\mathcal{M}$ is essential to our investigation of the mild solutions to the reaction-diffusion equations because $X^\e_x$ will be a mild solution solving \eqref{eq:mild-def} if and only if
\[X^\e_x = \mathcal{M}(S(\cdot)x + \sqrt{\e}Z^\e_x)\]
where
\[Z^\e_x(t) = \int_0^t S(t-s)R(s,X^\e_x(s))dw(s).\]
Similarly, $X^{\e,u}_x$ solves \eqref{eq:controlled-mild} if and only if
\[X^{\e,u}_x = \mathcal{M}(S(\cdot)x + Y^{\e,u}_x + \sqrt{\e}Z^{\e,u}_x)\]
where
\[Y^{\e,u}_x(t) = \int_0^t S(t-s) R(s,X^{\e,u}_x(s))Qu(s)ds\]
and
\[Z^{\e,u}_x(t) = \int_0^t S(t-s)R(s,X^{\e,u}_x(s))dw(s).\]

\begin{theorem}
  For any $z \in E_T$, there exists a solution $\mathcal{M}(z) \in E_T$ to \eqref{eq:M-def}.
\end{theorem}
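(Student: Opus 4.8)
The plan is to construct $\mathcal{M}(z)$ by a fixed-point / continuation argument, exploiting the monotonicity of $F$ rather than any Lipschitz continuity. First I would reduce to the case $z \equiv 0$ is not possible since $z$ appears as an additive forcing, so instead I work directly with the integral equation \eqref{eq:M-def}. The key observation is that $F = G + H$ where $G$ is the Nemytskii operator associated with the decreasing functions $g_i$ and $H$ is globally Lipschitz. Because $g_i$ is only continuous and decreasing (not Lipschitz), I cannot directly apply the Banach fixed-point theorem to the full nonlinearity, so the idea is to first handle the decreasing part via a monotone approximation or via the theory of $m$-accretive operators on $\tilde E$: the operator $-\mathcal{A}_i$ generates a contraction semigroup and $-G$ is accretive in the sup-norm sense (this is where one uses that $g_i$ is decreasing, together with a maximum-principle-type argument showing that $S(t)$ is positivity/order preserving).

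The concrete steps I would carry out are: (1) Truncate $g_i$ to a bounded Lipschitz function $g_i^{(k)}$ by, say, composing with a cutoff, so that $f_i^{(k)} := g_i^{(k)} + h_i$ satisfies a global Lipschitz condition; for each $k$, standard contraction-mapping arguments (on a small time interval, then iterated to $[0,T]$ using the linear growth \eqref{eq:h-lin-growth}) give a unique solution $\mathcal{M}^{(k)}(z) \in E_T$ to the truncated equation. (2) Derive an a priori bound on $|\mathcal{M}^{(k)}(z)|_{E_T}$ that is uniform in $k$: here the monotonicity \eqref{eq:g-decr} is essential — one tests the equation against the sign of the solution at the point and time where its sup-norm is attained (using the left-derivative of the sup-norm, Appendix \ref{S:appendix-subdiff}), so that the $g_i$ contribution has a favorable sign and only the Lipschitz part $h_i$ and the forcing $z$ contribute, yielding a Gronwall inequality. (3) Conclude that for $k$ larger than this uniform bound, the truncation is inactive, so $\mathcal{M}^{(k)}(z)$ solves the untruncated equation \eqref{eq:M-def}; set $\mathcal{M}(z) := \mathcal{M}^{(k)}(z)$ for such $k$. (4) Verify $\mathcal{M}(z) \in E_T$, i.e. that it is continuous in $(t,\xi)$ with zero boundary values — the boundary values vanish because $S(t)$ maps $\tilde E$ to $\tilde E$ and $F(s,\cdot)$ maps $E$ to $E$, and continuity follows from the regularizing properties of the analytic semigroup $S_i(t)$ acting on the bounded forcing term.

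The main obstacle I expect is step (2), the uniform a priori estimate: because $g_i$ is not assumed to have any growth bound at all (it can grow arbitrarily fast, it is merely decreasing), one must be careful that the estimate truly does not see $g_i$. The right tool is the observation that in the mild formulation one cannot differentiate, so one should instead argue via the variational/approximation route — approximate $S_i(t)$ by its Yosida approximation or work with strong solutions of the approximating problems — and use that for the strong formulation $\frac{d}{dt}\tfrac12|u|^2 \leq \langle \mathcal{A}_i u, u\rangle + \langle g_i(u) + h_i(u) + \dot z, u\rangle$ with $\langle g_i(u),u\rangle \leq \langle g_i(0),u\rangle$ by monotonicity, the $\mathcal{A}_i$ term being $\leq 0$. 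Translating this $L^2$ (or sup-norm, via the subdifferential lemma in Appendix \ref{S:appendix-subdiff}) computation back to the mild solution requires the density/closure argument that is somewhat delicate; alternatively one can avoid differentiation entirely by comparing $\mathcal{M}^{(k)}(z)$ with $\mathcal{M}^{(k)}$ applied to a comparison function and invoking order-preservation of the semigroup. Either way, this monotonicity-based a priori bound, independent of any structure of $g_i$ beyond \eqref{eq:g-decr}, is the crux that replaces the localization arguments of \cite{cr-2004}.
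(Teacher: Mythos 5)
There is a genuine gap at the very first step, and it is precisely the point the paper's hypotheses are designed to handle. You propose to truncate $g_i$ ``by composing with a cutoff'' so that $f_i^{(k)}=g_i^{(k)}+h_i$ becomes globally Lipschitz and the approximating problems can be solved by Banach fixed point. But under Assumption \ref{assum:vector-field} the function $g_i$ is only continuous and decreasing, \emph{not} locally Lipschitz (think of $g_i(t,\xi,x)=-\sgn(x)|x|^{1/3}$): cutting off the argument at level $k$ yields a bounded continuous function that is still non-Lipschitz near the bad points, so the contraction-mapping step in your item (1) is unavailable, and with it the whole ``truncation is inactive for large $k$'' shortcut of item (3) collapses. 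The paper's proof replaces the cutoff by the Yosida approximation $g_{i,n}=n(J_{i,n}-\mathrm{id})$, which is automatically $2n$-Lipschitz \emph{whatever} the modulus of continuity of $g_i$, remains decreasing, satisfies $|g_{i,n}|\le|g_i|$, and converges pointwise to $g_i$; since this approximation perturbs $g_i$ everywhere (not just outside a ball), one cannot conclude that the approximate solution eventually solves the original equation, and instead the paper extracts a convergent subsequence via a uniform H\"older bound (elliptic regularity) and Arzel\`a--Ascoli, passing to the limit by dominated convergence.

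A secondary issue is your a priori estimate in step (2): as written it involves $\dot z$, which does not exist for a generic $z\in E_T$. The paper's device is to estimate $v_n=u_n-z$, which solves an equation in which $z$ enters only through the nonlinearity evaluated at $v_n+z$; applying the left-derivative lemma (Proposition \ref{prop:left-deriv}) at a maximizer and adding and subtracting $f_{i,n}(t,\xi_t,z(t,\xi_t))$, monotonicity kills the dangerous difference of $g_{i,n}$-terms and what survives is $\sup_n|f_{i,n}(\cdot,\cdot,z)|\le |f_i(\cdot,\cdot,z)|+2|h_i(\cdot,\cdot,z)|$, finite because $z$ is a fixed bounded continuous function. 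So the estimate does ``see'' $g_i$, but only through its values on the compact range of $z$, which is harmless; your worry that it must not see $g_i$ at all, and the $L^2$-energy detour with $\dot z$, are both unnecessary once this subtraction trick is used. Your intuition that monotonicity plus the sup-norm left-derivative is the crux is correct, but the construction of the approximating solutions must go through Yosida (or some other Lipschitz regularization that preserves monotonicity and is dominated by $|g_i|$), not through truncation.
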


\begin{proof}
  Let $g_i$ be the non-increasing functions from Assumption \ref{assum:vector-field}. For $n \in \mathbb{N}$, $t\geq 0$ and $\xi \in \mathcal{O}$ define $x \mapsto g_{i,n}(t,\xi,x)$ to be the Yosida approximation
  \[g_{i,n}(t,\xi,x) := n\left(J_{i,n}(t,\xi,x)  -x \right), \ \ \  J_{i,n}(t,\xi,x) = \left(\bullet - \frac{1}{n} g_{i}(t,\xi,\bullet)\right)^{-1}(x). \]
  %For each $n \in \mathbb{N}$, $i \in \{1,...,r\}$, $t \geq 0$, $\xi \in \mathcal{O}$, $x \mapsto g_{i,n}(t,\xi,x)$ is Lipschitz continuous.
  Let $f_{i,n}(t,\xi,x) := g_{i,n}(t,\xi,x_i) + h_i(t,\xi,x)$.

  According to \cite[Proposition D.11]{dpz}, for $t\geq 0$, $ \xi \in \bar{\mathcal{O}}$, $i \in \{1,...,r\}$, and $x,y \in \mathbb{R}$
  \begin{align}
    &|g_{i,n}(t,\xi,x) - g_{i,n}(t,\xi,y)| \leq 2n |x-y|\\
    &|g_{i,n}(t,\xi,x)| \leq |g_{i}(t,\xi,x)|\\
    &g_{i,n}(t,\xi,x) - g_{i,n}(t,\xi,y) \leq 0 \text{ for } x>y,\\
    &\lim_{n \to +\infty} g_{i,n}(t,\xi,x) = g_i(t,\xi,x).
  \end{align}

  Because $h_i$ is Lipschitz continuous \eqref{eq:h-Lip-assum} and $f_{i,n} = g_{i,n} + h_i$, for any $t \geq 0$, $\xi \in \bar{\mathcal{O}}$, $i \in \{1,....,r\}$, and $x,y \in \mathbb{R}^r$,
  \begin{align}
    &|f_{i,n}(t,\xi,x) - f_{i,n}(t,\xi,y)| \leq (2n + L(t)) |x-y|\\
    &|f_{i,n}(t,\xi,x)| \leq |f_{i}(t,\xi,x)| + 2|h_i(t,\xi,x)| \label{eq:f-bound}\\
    &f_{i,n}(t,\xi,x) - f_{i,n}(t,\xi,y) \leq L(t)|x-y| \text{ for } x>y,\\
    &\lim_{n \to +\infty} f_{i,n}(t,\xi,x) = f_i(t,\xi,x). \label{eq:f-limit}
  \end{align}

  Let $F_{n}: [0,T]\times E \to E$ be the Nemytskii operator for $(f_{1,n},...,f_{r,n})$
  \begin{equation}
    [F_n(t,x)]_i(\xi) = f_{i,n}(t,\xi,x(\xi)).
  \end{equation}
  Because the $f_{i,n}$ are each globally Lipschitz continuous, standard Picard iteration arguments show that there exists a unique solution $u_n \in E_T$ solving
  \[u_n(t) = \int_0^t S(t-s)F_n(s,u_n(s))ds + z(t).\]

  We prove some uniform bounds on the sequence $u_n$. Let $v_n(t) = u_n(t) - z(t)$. These $v_n$ are weakly differentiable and they solve the integral equation
  \[v_n(t) = \int_0^t S(t-s)F_n(s,v_n(s) + z(s))ds.\]
  The $v_n$ weakly solve the differential equation
  \[\frac{\partial v_{n,i}}{\partial t}(t,\xi) = \mathcal{A}_i v_{n,i}(t,\xi) + g_{i,n}(t,\xi,v_{n,i}(t,\xi) + z_i(t,\xi)) + h_i(t,\xi, v_n(t,\xi) + z(t,\xi)).\]
  Arguing as in Theorem 7.7 of \cite{dpz} and Proposition 6.2.2 of \cite{cerrai}, we can assume without loss of generality that $v_n$ are strongly differentiable.

  By Proposition \ref{prop:left-deriv} in the appendix, for $i_t \in \{1,...,r\}$, $\xi_t \in \mathcal{O}$, such that $v_{i_t}(t,\xi_t)\sgn(v_{i_t}(t,\xi_t)) = |v(t,\cdot)|_E$,
  \begin{align*}
    \frac{d^-}{dt} &|v_n(t)|_E\\
    \leq &\mathcal{A}_{i_t} v_{n,i_t}(t,\xi_t)\sgn(v_{n,i_t}(t,\xi_t)) \\
    &+ g_{i_t,n}(t,\xi_t, v_{n,i_t}(t,\xi_t) + z_{n,i_t}(t,\xi_t))\sgn(v_{n,i_t}(t,\xi_t)) \\
    &+ h_{i_t}(t,\xi_t, v_n(t,\xi_t) + z_n(t,\xi_t))\sgn(v_{n,i_t}(t,\xi_t)).
  \end{align*}
  Because $\mathcal{A}_{i_t}$ is a second-order elliptic differential operator (see Assumption \ref{assum:elliptic-operators}) and $i_t, \xi_t$ are a maximizer or minimizer, the concavity of a function at its maximum/minimum implies that
  \[\mathcal{A}_{i_t} v_{n,i_t}(t,\xi_t)\sgn(v_{n,i_t}(t,\xi_t)) \leq 0.\]
   Because $g_{i_t,n}(t,\xi_t,\cdot)$ is non-increasing and $h_{i_t}$ is Lipschitz continuous, by adding and subtracting $f_{i_t,n}(t,\xi_t,z(t,\xi_t))\sgn(v_{n,i_t}(t,\xi_t))$, we see that
  \begin{align*}
    \frac{d^-}{dt} &|v_n(t)|_E\\
    &\leq f_{i_t,n}(t,\xi_t, z(t,\xi_t))\sgn(v_{n,i_t}(t,\xi_t)) \\
    &\qquad+ (g_{i_t,n}(t,\xi_t, v_{n,i_t}(t,\xi_t)+z_{i_t}(t,\xi_t)) - g_{i_t,n}(t,\xi_t, z_{i_t}(t,\xi_t)))\sgn(v_{n,i_t}(t,\xi_t)) \\
    &\qquad+ |h_{i_t}(t,\xi_t, v_{n,i_t}(t,\xi_t)+z_{i_t}(t,\xi_t)) - h_{i_t}(t,\xi_t, z(t,\xi_t))|\\
    &\leq |f_{i_t,n}(t,\xi_t, z(t,\xi_t))| + L(t)|v_{i_t}(t,\xi_t)| \\
    &\leq \sup_n\sup_{s \in [0,t]} \sup_{i \in \{1,...,r\}} \sup_{\xi \in \mathcal{O}} |f_{i,n}(s,\xi,z(s,\xi))| + L(t)|v(t)|_E.
  \end{align*}
  By Gr\"onwall's inequality and \eqref{eq:f-bound},
  \[\sup_n \sup_{t \in [0,T]} |v_n(t)|_E \leq C_T \sup_n \sup_{s \in [0,t]} \sup_{i \in \{1,...,r\}} \sup_{\xi \in \mathcal{O}} |f_{i,n}(s,\xi,z(s,\xi))|<+\infty.\]
  Then
  \[\sup_n  |u_n|_{E_T} \leq \sup_n (|v_n|_{E_T} + |z|_{E_T})<+\infty. \]
  By \eqref{eq:f-bound} and \eqref{eq:h-lin-growth}
  $$\sup_{n}|F_n(\cdot, u_n(\cdot))|_{E_T} \leq |F(\cdot,u_n(\cdot))|_{E_T} + 2L(T)(1 + |u_n|_{E_T}) <+\infty$$
  By standard elliptic regularity arguments, because
  \[v_n(t) = \int_0^t S(t-s)F_n(s,u_n(s))ds\]
  and $\sup_{n}|F_n(\cdot, u_n(\cdot))|_{E_T} <+\infty$,
  there exist $\gamma>0$, $\beta>0$ such that
  \[\sup_n \sup_{i \in \{1,...,r\}}\sup_{\substack{s,t \in [0,T]\\s\not = t}} \sup_{\substack{\xi,\eta \in \mathcal{O}\\\xi \not =\eta}} \frac{|v_{n,i}(t,\xi) - v_{n,i}(s,\eta)|}{|t-s|^\gamma + |x-y|^\beta}<+\infty.\]

  By the Arzela-Ascoli Theorem, there exists a subsequence (relabeled $v_n$) and a limit $\tilde{v} \in E_T$ such that $v_n \to \tilde{v}$ in the $E_T$ norm.

  By the dominated convergence theorem and \eqref{eq:f-limit},
  \[\tilde{v}(t) = \int_0^t S(t-s)F(s,\tilde{v}(s) + z(s))ds.\]
  Then $\tilde{u}:= \tilde{v} + z$ is a solution to \eqref{eq:M-def}.
\end{proof}

\begin{theorem}[Lipschitz continuity of $\mathcal{M}$] \label{thm:M-Lipschitz}
  For any $T>0$, $\mathcal{M}$ is a Lipschitz continuous operator from $E_T \to E_T$. There exists $C_T>0$ depending only on $L(T)$ from Assumption \ref{assum:vector-field} such that for any $z_1,z_2 \in E_T$,
  \begin{equation} \label{eq:M-Lipschitz}
    |\mathcal{M}(z_1) - \mathcal{M}(z_2)|_{E_T} \leq C_T |z_1 - z_2|_{E_T}.
  \end{equation}
  In particular, this theorem proves that the solution to \eqref{eq:M-def} is unique.
\end{theorem}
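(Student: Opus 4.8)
The plan is to compare the two fixed points $\mathcal{M}(z_1)$ and $\mathcal{M}(z_2)$ by setting $w(t) := \mathcal{M}(z_1)(t) - \mathcal{M}(z_2)(t) - (z_1(t) - z_2(t))$, so that $w$ solves the integral equation
\[
  w(t) = \int_0^t S(t-s)\big(F(s,\mathcal{M}(z_1)(s)) - F(s,\mathcal{M}(z_2)(s))\big)\,ds.
\]
As in the existence proof, one works first with the Yosida approximations $f_{i,n} = g_{i,n} + h_i$, whose fixed points $u_n^{(1)}, u_n^{(2)}$ are classical, passes to the strongly differentiable setting (Theorem 7.7 of \cite{dpz}, Proposition 6.2.2 of \cite{cerrai}), derives the estimate at the approximate level, and then sends $n \to \infty$ using the $E_T$-convergence $u_n^{(k)} \to \mathcal{M}(z_k)$ already established. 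So I would really estimate $w_n(t) := u_n^{(1)}(t) - u_n^{(2)}(t) - (z_1(t) - z_2(t))$, which weakly solves
\[
  \frac{\partial w_{n,i}}{\partial t}(t,\xi) = \mathcal{A}_i w_{n,i}(t,\xi) + f_{i,n}(t,\xi,u_n^{(1)}(t,\xi)) - f_{i,n}(t,\xi,u_n^{(2)}(t,\xi)).
\]

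The core step is the left-derivative estimate for $|w_n(t)|_E$ via Proposition \ref{prop:left-deriv}: at a space--component point $(i_t,\xi_t)$ realizing the supremum norm of $w_n(t)$, the elliptic term contributes $\mathcal{A}_{i_t} w_{n,i_t}(t,\xi_t)\sgn(w_{n,i_t}(t,\xi_t)) \leq 0$ by concavity/convexity at the extremum, exactly as in the existence proof. For the reaction term I would write, with the shorthand $a = u_n^{(1)}(t,\xi_t)$, $b = u_n^{(2)}(t,\xi_t)$,
\[
  \big(f_{i_t,n}(t,\xi_t,a) - f_{i_t,n}(t,\xi_t,b)\big)\sgn(w_{n,i_t}(t,\xi_t)),
\]
and split it using $f_{i,n} = g_{i,n} + h_i$. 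The Lipschitz part gives at most $L(t)|a_{i_t} - b_{i_t}| \le L(t)(|w_n(t)|_E + |z_1 - z_2|_{E_T})$ since $a - b = w_n(t,\xi_t) + (z_1(t,\xi_t) - z_2(t,\xi_t))$. For the decreasing part $g_{i_t,n}$, the key observation is that $\sgn(w_{n,i_t}(t,\xi_t)) = \sgn(a_{i_t} - b_{i_t} - (z_{1,i_t}(t,\xi_t) - z_{2,i_t}(t,\xi_t)))$; monotonicity of $g_{i_t,n}(t,\xi_t,\cdot)$ only controls the sign of $(g_{i_t,n}(t,\xi_t,a_{i_t}) - g_{i_t,n}(t,\xi_t,b_{i_t}))\sgn(a_{i_t} - b_{i_t})$, not against $\sgn(w_{n,i_t})$. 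I expect this to be the main obstacle. The resolution is again to add and subtract: write $g_{i_t,n}(t,\xi_t,a_{i_t}) - g_{i_t,n}(t,\xi_t,b_{i_t}) = [g_{i_t,n}(t,\xi_t,a_{i_t}) - g_{i_t,n}(t,\xi_t, b_{i_t} + (z_{1,i_t} - z_{2,i_t})(t,\xi_t))] + [g_{i_t,n}(t,\xi_t, b_{i_t} + (z_{1,i_t} - z_{2,i_t})(t,\xi_t)) - g_{i_t,n}(t,\xi_t,b_{i_t})]$, where the first bracket, multiplied by $\sgn(w_{n,i_t}) = \sgn(a_{i_t} - (b_{i_t} + (z_{1,i_t} - z_{2,i_t})(t,\xi_t)))$, is $\le 0$ by monotonicity (the two arguments differ exactly by $w_{n,i_t}$), and the second bracket, being a difference of values of the possibly non-Lipschitz $g_{i_t,n}$, is bounded in magnitude by $2n|z_1 - z_2|_{E_T}$ — but this $n$-dependent bound is useless in the limit.

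To avoid the $2n$ blow-up I would instead not isolate $g_{i_t,n}$ alone but absorb the troublesome increment into the elliptic/monotone structure differently: regroup as $f_{i_t,n}(t,\xi_t,a) - f_{i_t,n}(t,\xi_t, \tilde b)$ plus $f_{i_t,n}(t,\xi_t,\tilde b) - f_{i_t,n}(t,\xi_t,b)$ where $\tilde b := b + (z_1(t,\xi_t) - z_2(t,\xi_t))$, i.e. $\tilde b - a$ differs from $-w_n(t,\xi_t)$... hmm, this requires care since $f_i$ couples components through $h_i$. A cleaner route: since $\mathcal{M}(z_1)(t) - \mathcal{M}(z_2)(t) = w_n(t) + (z_1 - z_2)(t)$ at the limit, set $\phi_n(t) := u_n^{(1)}(t) - u_n^{(2)}(t)$ directly (not subtracting $z_1 - z_2$) and estimate $|\phi_n(t) - (z_1(t) - z_2(t))|_E = |w_n(t)|_E$ pointwise at its maximizer, but evaluate the reaction difference at the arguments $u_n^{(1)}, u_n^{(2)}$ and add/subtract $f_{i_t,n}(\cdot, u_n^{(2)}(t,\xi_t) + (\mathcal{M}(z_1) - \mathcal{M}(z_2))(t,\xi_t))$; the monotone part then cancels against $\sgn(w_{n,i_t})$ because the two arguments differ precisely by $w_{n,i_t}(t,\xi_t)$, and the remaining increment of $f_{i_t,n}$ is over a displacement equal to $(z_1 - z_2)(t,\xi_t)$ handled only through the Lipschitz part $h_{i_t}$ after noting $g_{i_t,n}(t,\xi_t, \cdot)$-differences over that displacement combine with the first monotone term. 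Once the pointwise bound
\[
  \frac{d^-}{dt}|w_n(t)|_E \leq L(t)\,|w_n(t)|_E + L(t)\,|z_1 - z_2|_{E_T}
\]
is in hand (uniformly in $n$), Gr\"onwall gives $|w_n|_{E_T} \le C_T |z_1 - z_2|_{E_T}$ with $C_T = C_T(L(T))$; passing $n \to \infty$ via the already-established $E_T$-convergence yields $|\mathcal{M}(z_1) - \mathcal{M}(z_2) - (z_1 - z_2)|_{E_T} \le C_T |z_1 - z_2|_{E_T}$, and the triangle inequality upgrades this to \eqref{eq:M-Lipschitz} with constant $C_T + 1$. Uniqueness of the solution to \eqref{eq:M-def} follows by taking $z_1 = z_2$.
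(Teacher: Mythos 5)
Your overall framework is the paper's: you compare the fixed points through $w=\mathcal{M}(z_1)-\mathcal{M}(z_2)-(z_1-z_2)$, estimate $\frac{d^-}{dt}|w(t)|_E$ at a maximizer $(i_t,\xi_t)$ via Proposition \ref{prop:left-deriv}, kill the elliptic term by concavity at the extremum, and close with Gr\"onwall plus the triangle inequality. You also correctly identified the crux: monotonicity of $g$ controls the reaction increment only against $\sgn$ of the \emph{full} argument difference, which is $w_{i_t}+\tilde z_{i_t}$ (with $\tilde z=z_1-z_2$), not against $\sgn(w_{i_t})$. But your resolution of this crux has a genuine gap. In your ``cleaner route'' you split the increment at the intermediate point $b+\tilde z(t,\xi_t)$; the first bracket is indeed nonpositive, but the second bracket is an increment of $g_{i_t,n}$ (or of $g_{i_t}$ in the limit) over a displacement of size $|\tilde z|$, and since $g$ is only continuous and decreasing this is \emph{not} $O(|\tilde z|)$ --- for the Yosida approximation it is only bounded by $2n|\tilde z|$, exactly the blow-up you yourself flagged in your first attempt. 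The claim that this term ``combines with the first monotone term'' is where the proof breaks: the first bracket is nonpositive but of unknown magnitude, so it cannot be used quantitatively to absorb a possibly huge positive second bracket (think $g(x)=-x^3$ with $b$ large: the increment over a $|\tilde z|$-sized displacement is of order $b^2|\tilde z|$, and no bound depending only on $L(T)$ results). Consequently the pointwise inequality $\frac{d^-}{dt}|w(t)|_E\le L(t)|w(t)|_E+L(t)|\tilde z|_{E_T}$, which your Gr\"onwall step needs \emph{for all} $t$, is not established, and it cannot be obtained by any splitting of the displacement $w+\tilde z$ into $w$ and $\tilde z$ pieces.

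The missing idea, and the paper's fix, is a dichotomy rather than a splitting: at each time $t$ either $\sgn(w_{i_t}(t,\xi_t))=\sgn(w_{i_t}(t,\xi_t)+\tilde z_{i_t}(t,\xi_t))$, in which case monotonicity applies to the \emph{undivided} $g$-increment (whose argument displacement is exactly $w_{i_t}+\tilde z_{i_t}$) and yields your differential inequality with only the $h$-contribution $L(t)(|w(t)|_E+|\tilde z(t)|_E)$; or the signs disagree, in which case no derivative estimate is needed at all, because the sign mismatch forces $|w_{i_t}(t,\xi_t)|\le|\tilde z_{i_t}(t,\xi_t)|$ and hence $|w(t)|_E\le|\tilde z(t)|_E$ directly. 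The two regimes are then glued by the auxiliary function $\phi(t):=\max\{|\tilde z|_{E_T},|w(t)|_E\}$, writing $\phi(t)\le|\tilde z|_{E_T}+\int_0^t\frac{d^-}{ds}|w(s)|_E\,\mathbbm{1}_{\{|w(s)|_E>|\tilde z|_{E_T}\}}ds$ and applying Gr\"onwall to $\phi$, which gives $|w|_{E_T}\le C_T|\tilde z|_{E_T}$ with $C_T$ depending only on $L(T)$; the triangle inequality then yields \eqref{eq:M-Lipschitz} and uniqueness, as you say. A minor additional remark: the Yosida regularization is not needed for this theorem --- the paper runs the argument directly with $g_i$, since no Lipschitz constant for $g$ is ever used once the dichotomy is in place.
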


\begin{proof}
  Let $z_1,z_2 \in E_T$. Let $u_1 := \mathcal{M}(z_1)$ and $u_2 := \mathcal{M}(z_2)$ be solutions to \eqref{eq:M-def}. Let $\tilde{u} = u_1-u_2$ and $\tilde z = z_1 - z_2$. Let $v_1 = u_1 - z_1$, $v_2 = u_2 - z_2$.  Let $\tilde v= v_1 - v_2$. Then by the definition \eqref{eq:M-def},
  \[\tilde v(t) = \int_0^t S(t-s) (F(s,v_1(s) + z_1(s)) - F(s,v_2(s) + z_2(s)))ds.\]

  Because $\tilde v$ is written as a convolution with a semigroup generated by an elliptic operator, $\tilde v$ is weakly differentiable and
  \[\frac{\partial \tilde v_i}{\partial t}(t,\xi) = \mathcal{A}_i \tilde v_i(t,\xi) + (f_i(t,\xi,v_1(t,\xi) + z_1(t,\xi))-f_i(t,\xi ,v_2(t,\xi) + z_2(t,\xi))).\]
  By arguing as in Theorem 7.7 of \cite{dpz} or Proposition 6.2.2 of \cite{cerrai}, we can assume without loss of generality that $\tilde{v}$ is strongly differentiable.

  By Proposition \ref{prop:left-deriv} in the Appendix, for any $t>0$ and any index $i_t \in \{1,...,r\}$ and $\xi_t \in \mathcal{O}$ such that
  \[|\tilde v(t)|_E = \tilde v_{i_t}(t,\xi_t)\sgn(\tilde v_{i_t}(t,\xi_t)),\]
  \begin{align*}
    &\frac{d^-}{dt} |\tilde v(t)|_E \\
    &\leq \mathcal{A}_{i_t} \tilde v_{i_t}(t,\xi_t) \sgn(\tilde v_{i_t}(t,\xi_t)) \\
    &\qquad+ (g_{i_t}(t,\xi_t,v_{1,i_t}(t,\xi_t) + z_{1,i_t}(t,\xi_t)) - g_{i_t}(t,\xi_t,v_{2,i_t}(t,\xi_t) + z_{2,i_t}(t,\xi_t)))\sgn(\tilde v_{i_t}(t,\xi_t))\\
    &\qquad+ (h_{i_t}(t,\xi_t,v_1(t,\xi_t) +z_1(t,\xi_t)) - h_{i_t}(t,\xi_t,v_2(t,\xi_t) + z_2(t,\xi_t)))\sgn(\tilde v_{i_t}(t,\xi_t)),
  \end{align*}
  where $g_i$ is the non-increasing function and $h_i$ is the Lipschitz continuous function from Assumption \ref{assum:vector-field}.

  By the ellipticity condition on $\mathcal{A}_{i_t}$ from Assumption \ref{assum:elliptic-operators}, because $\xi_t$ is a maximizer or minimizer of $\tilde{v}_{i_t}(t,\cdot)$, the concavity of a function at its maximum/minimum implies that
  \[\mathcal{A}_{i_t} \tilde{v}_{i_t}(t,\xi_t) \sgn(\tilde{v}_{i_t}(t,\xi_t)) \leq 0.\]

  For any $t>0$ there are two cases: either
  $$\sgn(\tilde{v}_{i_t}(t,\xi_t)) = \sgn(\tilde{v}_{i_t}(t,\xi_t) + \tilde{z}_{i_t}(t,\xi_t))$$
   or
  $$\sgn(\tilde{v}_{i_t}(t,\xi_t)) \not= \sgn(\tilde{v}_{i_t}(t,\xi_t) + \tilde{z}_{i_t}(t,\xi_t)).$$

  If $\sgn(\tilde{v}_{i_t}(t,\xi_t)) = \sgn(\tilde{v}_{i_t}(t,\xi_t) + \tilde{z}_{i_t}(t,\xi_t))$, then because $g_{i_t}$ is non-increasing \eqref{eq:g-decr},
  \[(g_{i_t}(t,\xi_t, v_{1,i_t}(t,\xi_t) + z_{1,i_t}(t,\xi_t)) - g_{i_t}(t,\xi_t, v_{2,i_t}(t,\xi_t) + z_{2,i_t}(t,\xi_t)))\sgn(\tilde v_{i_t}(t,\xi_t)) \leq 0.\]
  Due to the Lipschitz continuity of $h$ \eqref{eq:h-Lip-assum},
  \[\frac{d^-}{dt} |\tilde v(t)|_E \leq  L(t)|\tilde{v}(t)|_E + L(t)|\tilde{z}(t)|_E.\]

  On the other hand, if $\sgn(\tilde{v}_{i_t}(t,\xi_t)) \not= \sgn(\tilde{v}_{i_t}(t,\xi_t) + \tilde{z}_{i_t}(t,\xi_t))$, then
  \[|\tilde v_{i_t}(t,\xi_t)| \leq |\tilde z_{i_t}(t,\xi_t)|.\]
  Because $i_t$ and $\xi_t$ maximize $\tilde v$, and the $E$ norm is a supremum norm, in the case where $\sgn(\tilde{v}_{i_t}(t,\xi_t)) \not= \sgn(\tilde{v}_{i_t}(t,\xi_t) + \tilde{z}_{i_t}(t,\xi_t))$,
  \[|\tilde v(t)|_E = |\tilde v_{i_t}(t,\xi_t)| \leq |\tilde z_{i_t}(t,\xi_t)| \leq |\tilde z(t)|_E.\]

  We have shown that for any given $t>0$ there are only two possibilities. For any $t>0$, either
  \[\frac{d^-}{dt}|\tilde v(t)|_E \leq L(t) |\tilde{v}(t)|_E + L(t) |\tilde{z}(t)|_E,\]
  or
  \[|\tilde v(t)|_E \leq |\tilde z(t)|_E.\]
  For $t \in [0,T]$, let $\phi(t): = \max\{ |\tilde z|_{E_T}, |\tilde v(t)|_E\}$. Note that because $|\tilde{v}(0)|_E=0$ it follows that $\phi(0) = |\tilde z|_{E_T}$. Therefore,
  \begin{align*}
    \phi(t) &\leq |\tilde{z}|_{E_T} + \int_0^t \frac{d^{-}}{ds}|\tilde{v}(s)|_E \mathbbm{1}_{\{|\tilde{v}(s)|_E> |\tilde{z}|_{E_T}\}}ds\\
    %& \leq |\tilde{z}|_{E_T} + \int_0^t \frac{d^{-}}{ds}|\tilde{v}(s)|_E \mathbbm{1}_{\{|\tilde{v}(s)|_E> \tilde{z}(s)|_E\}}ds\\
    &\leq C_T |\tilde{z}|_{E_T} + L(T)\int_0^t \left(|\tilde{v}(s)|_E + |\tilde{z}(s)|_E\right)ds\\
    &\leq C_T  |\tilde{z}|_{E_T} +2 L(T)\int_0^t \phi(s)ds.
  \end{align*}
  By Gr\"onwall's inequality, there exists $C_T>0$ such that
  \[\sup_{t \in [0,T]}\phi(t) \leq C_T |\tilde{z}|_{E_T}.\]
  Therefore
  \[|\tilde{v}|_{E_T}\leq C_T |\tilde{z}|_{E_T}.\]
  Because $\tilde u(t) = \tilde v(t) + \tilde z(t)$,
  \[|\tilde u|_{E_T} \leq (C_T + 1) |\tilde z|_{E_T},\]
  proving our result.
\end{proof}

\section{Existence and uniqueness of the solution to controlled stochastic reaction diffusion equations} \label{S:exit-uniq}
In this section we prove that under Assumptions \ref{assum:vector-field}, \ref{assum:sigma}, \ref{assum:elliptic-operators}, and \ref{assum:noise}, the solutions to the controlled SPDE \eqref{eq:controlled-mild} exist and are unique. Because our assumptions are weaker than previous results, these existence and uniqueness results cannot be found in the literature.  The existence of the mild solutions to the uncontrolled SPDE \eqref{eq:mild-def} is a corollary obtained by using the trivial control $u\equiv 0$. Notice that our assumptions are strictly weaker than those in \cite{c-2003} or \cite{cr-2004}.

\begin{theorem} \label{thm:exist-uniq}
  For any $x \in E$,  $N>0$, $u \in \mathscr{A}_N$, and $\e>0$, there exists a unique solution $X^{\e,u}_x$ to \eqref{eq:controlled-mild} and the solution is $E_T$ valued.
\end{theorem}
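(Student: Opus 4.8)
The plan is to combine the already-established Lipschitz continuity of the fixed-point mapping $\mathcal{M}$ (Theorem \ref{thm:M-Lipschitz}) with a fixed-point argument for the stochastic and control convolutions. Recall that $X^{\e,u}_x$ solves \eqref{eq:controlled-mild} if and only if $X^{\e,u}_x = \mathcal{M}(S(\cdot)x + Y^{\e,u}_x + \sqrt{\e}Z^{\e,u}_x)$, where $Y^{\e,u}_x(t) = \int_0^t S(t-s)R(s,X^{\e,u}_x(s))Qu(s)ds$ and $Z^{\e,u}_x(t) = \int_0^t S(t-s)R(s,X^{\e,u}_x(s))dw(s)$. So the idea is to set up a map $\Gamma$ on a suitable space of adapted $E_T$-valued processes by $\Gamma(X) := \mathcal{M}(S(\cdot)x + Y(X) + \sqrt{\e}Z(X))$, where $Y(X)$ and $Z(X)$ denote the control and stochastic convolutions built from $X$, and to show $\Gamma$ has a unique fixed point.

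First I would record the relevant bounds on the two convolutions. For the stochastic convolution $Z(X)$, I would invoke the factorization-method estimates of Cerrai \cite{c-2003,c-2009-khasminskii} recalled in Appendix \ref{S:stoch-conv}: under Assumption \ref{assum:noise} the map $X \mapsto Z(X)$ takes $E_T$-valued adapted processes to $E_T$-valued adapted processes, with a moment bound $\E|Z(X)|_{E_T}^p \lesssim 1 + \E|X|_{E_T}^p$ coming from the linear growth \eqref{eq:sigma-lin-growth} of $\sigma$, and a Lipschitz-type estimate on a short interval $[0,\tau]$ of the form $\E|Z(X_1) - Z(X_2)|_{E_\tau}^p \leq c(\tau)\,\E|X_1 - X_2|_{E_\tau}^p$ with $c(\tau)\to 0$ as $\tau\to 0$, using \eqref{eq:sigma-Lip}. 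For the control convolution $Y(X)$, I would estimate $|S(t-s)R(s,X(s))Qu(s)|_{\tilde E}$ using the smoothing of the semigroup together with the Hölder inequality in time against $\int_0^T|u(s)|^2_{L^2(\mathcal{O}\times\{1,\dots,r\})}ds \leq N$; the key point is that the bound on $Y(X_1)-Y(X_2)$ over $[0,\tau]$ again carries a factor that is small for small $\tau$ and a constant depending on $L(T)$, $N$, and the operator $Q$, uniformly over $u\in\mathscr{A}_N$. Composing with the Lipschitz constant $C_T$ of $\mathcal{M}$ from Theorem \ref{thm:M-Lipschitz}, I get that $\Gamma$ is a contraction on the space of adapted $E_\tau$-valued processes with finite $p$-th moment, for $\tau$ small enough depending only on $C_T$, $L(T)$, $N$, $Q$, and $\e$ (but not on $x$).

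Then I would run the standard argument: a contraction on $[0,\tau]$ gives existence and uniqueness of the mild solution on $[0,\tau]$; since the contraction time $\tau$ does not depend on the initial value, I can iterate on $[\tau,2\tau]$, $[2\tau,3\tau]$, and so on, patching the solutions together to obtain a unique $E_T$-valued solution on all of $[0,T]$. One must first check that $\Gamma$ genuinely maps the chosen space into itself — i.e., that a solution candidate has finite moments — which follows from the linear-growth bounds on $Z$ and $Y$ and the Lipschitz (hence linear-growth) bound on $\mathcal{M}$, combined with a Grönwall argument once the local solutions are patched. Uniqueness globally follows because two solutions must agree on each subinterval by the local uniqueness. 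Finally, the uncontrolled mild solution \eqref{eq:mild-def} is recovered as the special case $u\equiv 0\in\mathscr{A}_0$, giving the measurable solution map $\mathscr{G}_x$ referenced in Section \ref{S:ULDP}.

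The main obstacle is handling the stochastic convolution $Z(X)$ in the supremum (i.e. $E_T$) norm rather than in an $L^2$-type norm: the Da Prato–Kwapień–Zabczyk factorization argument needs the precise spectral conditions \eqref{eq:beta-rho-relation}–\eqref{eq:lambda-sum} of Assumption \ref{assum:noise} to produce $\tilde E$-continuity of the convolution and the requisite moment and Lipschitz estimates, and one has to make sure all the constants are uniform over $u\in\mathscr{A}_N$ and over $x\in E$ (so that the contraction time $\tau$ is uniform). Once those estimates are in hand — and they are exactly the content of the cited results of Cerrai in Appendix \ref{S:stoch-conv} — the rest is the routine Banach fixed-point/patching scheme described above.
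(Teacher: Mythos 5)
Your proposal is correct and follows essentially the same route as the paper: the same fixed-point map $\psi \mapsto \mathcal{M}(S(\cdot)x + Y^u(\psi) + \sqrt{\e}Z(\psi))$, with the Lipschitz continuity of $\mathcal{M}$ (Theorem \ref{thm:M-Lipschitz}) combined with the Appendix estimates on the stochastic and control convolutions (Theorems \ref{thm:Z-bound} and \ref{thm:control-Y-bounds}) to produce a contraction on a short time interval, uniform in $x$ and over $u\in\mathscr{A}_N$, followed by iteration on successive subintervals. The paper realizes the smallness of the contraction constant through the factor $C_{T_0}T_0(\e^{p/2}+N^{p/2})<1$ obtained from the time integral, which is the same mechanism as your $c(\tau)\to0$.
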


\begin{proof}
  We build a contraction mapping. Let $\hat{E}_T$ denote the collection of continuous random fields $\psi: \Omega \times [0,T] \times \bar{\mathcal{O}}\times\{1,...,r\} \to \mathbb{R}$ that are adapted to the filtration $\mathcal{F}_t$.

  By the definition of $\mathcal{M}$ \eqref{eq:M-def}, $X^{\e,u}_x$ is a solution to \eqref{eq:controlled-mild} if and only if it satisfies
  \begin{equation} \label{eq:X-M-def}
    X^{\e,u}_x = \mathcal{M}\big(S(\cdot) x + Y^u(X^{\e,u}_x) + \sqrt{\e} Z(X^{\e,u}_x)\big)
  \end{equation}
  where for any $\psi \in \hat{E}_T$, $Y^u(\psi) \in \hat{E}_T$ and $Z(\psi) \in \hat{E}_T$ are defined by
  \begin{equation} \label{eq:Y-mild-def}
    Y^u(\psi)(t) = \int_0^t S(t-s)R(s,\psi(s))Qu(s)ds
  \end{equation}
  %\[dY^{\e,u}_{x,i}(t) = \left[A_i Y^{\e,u}_{x,i}(t) + \sum_{n=1}^r \sigma_{in}(t,\cdot,X^{\e,u}_x(t))Q_n u_n(t)\right]dt \]
  and
  \begin{equation} \label{eq:Z-mild-def}
    Z(\psi)(t) = \int_0^t S(t-s)R(s,\psi(s))dw(s).
  \end{equation}
  %\[dZ^{\e,u}_{x,i}(t) = A_i Z^{\e,u}_{x,i}(t)dt + \sum_{n=1}^r \sigma_{in}(t,\cdot,X^{\e,u}_x(t))dw^{Q_n}(t). \]

  Let $\mathcal{K}^{\e,u}_x:\hat E_T \to \hat E_T$ be defined by
  \[\mathcal{K}^{\e,u}_x(\psi) = \mathcal{M} \left(S(\cdot) x + Y^u(\psi) + \sqrt{\e}Z(\psi)\right).\]

  Let $\beta, \rho>0$ be the constants from Assumption \ref{assum:noise}. Let $\alpha \in \left(0, \frac{1}{2}\left(1 - \frac{\beta(\rho-2)}{\rho} \right) \right)$, $\gamma \in (0,\alpha)$, and $p> \max\left\{\frac{1}{\alpha-\gamma}, \frac{d}{\gamma} \right\}$.
  By Theorem \ref{thm:M-Lipschitz}, there exists a constant $C_T>0$ such that for $\psi_1, \psi_2 \in L^p(\Omega: E_T)$,
  \begin{align} \label{eq:K-contr-bound}
    &\E|\mathcal{K}^{\e,u}_x(\psi_1) - \mathcal{K}^{\e,u}_x(\psi_2)|_{E_T}^p \nonumber\\
    &\leq C_T \left(\E|Y^u(\psi_1) - Y^u(\psi_2)|_{E_T}^p + \e^{\frac{p}{2}}\E|Z(\psi_1) - Z(\psi_2)|_{E_T}^p \right).
  \end{align}
  By Theorem \ref{thm:Z-bound}  in the Appendix,
  \begin{align*}
    &\E|Z(\psi_1) - Z(\psi_2)|_{E_T}^p \\
    &\leq C \E \int_0^T \left(\int_0^t (t-s)^{-2\alpha - \frac{\beta(\rho-2)}{\rho}} \max_{i \in \{1,...,r\}}|R_{i \cdot}(s,\psi_1(s))-R_{i \cdot}(s,\psi_2(s))|_{E}^2ds \right)^{\frac{p}{2}}dt.
  \end{align*}
  By the Lipschitz continuity of $R$, and the fact that $-2\alpha - \frac{\beta(\rho-2)}{\rho}>-1$,
  \begin{align} \label{eq:Z-psi-bound}
    &\E|Z(\psi_1) - Z(\psi_2)|_{E_T}^p \leq C_T \int_0^T \E|\psi_1 - \psi_2|_{E_t}^pdt.
  \end{align}
  Similarly, by Theorem \ref{thm:control-Y-bounds}, because $u \in \mathscr{A}_N$,
  \begin{equation} \label{eq:Y-psi-bound}
    \E\left|Y^u(\psi_1) - Y^u(\psi_2)\right|_{E_T}^p \leq C_T N^{\frac{p}{2}} \int_0^T \E |\psi_1 - \psi_2|_{E_t}^pdt.
  \end{equation}
  By \eqref{eq:K-contr-bound}, \eqref{eq:Z-psi-bound}, and \eqref{eq:Y-psi-bound},
  \begin{align*}
    &\E|\mathcal{K}^{\e,u}_x(\psi_1) - \mathcal{K}^{\e,u}_x(\psi_2)|^p_{E_T}% \nonumber\\
    \leq C_T \left( \e^{\frac{p}{2}} + N^{\frac{p}{2}} \right) \int_0^T |\psi_1 - \psi_2|_{E_t}^p dt.
  \end{align*}

  There exists a $T_0$ small enough so that $C_{T_0}T_0\left( \e^{\frac{p}{2}} + N^{\frac{p}{2}} \right)<1$. Then $\mathcal{K}^{\e,u}_x$ is a contraction mapping on $L^p(\Omega:E_{T_0})$ and there exists a unique fixed point $X^{\e,u}_x$ solving \eqref{eq:X-M-def} for $t \in [0,T_0]$. This argument can be repeated on $[T_0,2T_0]$, $[2T_0,3T_0]$ and so forth to prove that there exists a unique global solution to the control equation \eqref{eq:X-M-def}.
\end{proof}

\begin{corollary} \label{cor:exist-uniq}
  For any $x \in E$ and $\e>0$, there exists a unique global mild solution to the uncontrolled SPDE $X^\e_x$ \eqref{eq:mild-def}.
\end{corollary}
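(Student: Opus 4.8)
The plan is to deduce the corollary directly from Theorem \ref{thm:exist-uniq} by specializing to the trivial control. First I would observe that the constant control $u \equiv 0$ is admissible: for every $N > 0$ it lies in $\mathscr{A}_N$, since
\[
  \Pro\left(\sum_{n=1}^r \int_0^T \int_{\mathcal{O}} |0|^2 \, d\xi\, ds \leq N\right) = 1 .
\]
In particular $0 \in \mathscr{A}_1$, so Theorem \ref{thm:exist-uniq} applies with this choice of control and yields, for every $x \in E$ and $\e > 0$, a unique $E_T$-valued process $X^{\e,0}_x$ solving \eqref{eq:controlled-mild} with $u \equiv 0$.

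Next I would note that when $u \equiv 0$ the drift term coming from the control vanishes identically, $\int_0^t S(t-s) R(s, X^{\e,0}_x(s)) Q \cdot 0 \, ds = 0$, so the integral equation \eqref{eq:controlled-mild} satisfied by $X^{\e,0}_x$ is literally the mild-solution equation \eqref{eq:mild-def}. Hence $X^\e_x := X^{\e,0}_x$ is a mild solution of \eqref{eq:intro-SPDE}, it is $E_T$-valued, and uniqueness of solutions to \eqref{eq:mild-def} follows from the uniqueness part of Theorem \ref{thm:exist-uniq}. Since the time horizon $T > 0$ is arbitrary and the solutions obtained on $[0,T]$ and $[0,T']$ with $T<T'$ agree on $[0,T]$ by uniqueness, this furnishes a single global mild solution.

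There is essentially no obstacle here: the only point worth a word is that the contraction-mapping construction in the proof of Theorem \ref{thm:exist-uniq} does not degenerate when $N$ is taken as small as we like — the step size $T_0$ there was chosen so that $C_{T_0} T_0 (\e^{p/2} + N^{p/2}) < 1$, which only becomes easier to satisfy as $N \downarrow 0$ — and that the measurable selection making $X^\e_x = \mathscr{G}_x(\sqrt{\e} W)$, used in Section \ref{S:ULDP}, is precisely the one produced by this fixed-point argument, so no additional measurability verification is needed.
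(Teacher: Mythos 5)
Your proposal is correct and follows exactly the paper's argument: apply Theorem \ref{thm:exist-uniq} with the trivial control $u\equiv 0$, under which the controlled equation \eqref{eq:controlled-mild} reduces to \eqref{eq:mild-def}, so $X^\e_x = X^{\e,0}_x$ exists, is unique, and is $E_T$-valued. The additional remarks on the contraction step and measurability are harmless but not needed.
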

\begin{proof}
  This is immediate by using the trivial control $u\equiv 0$ in Theorem \ref{thm:exist-uniq} because $X^\e_x = X^{\e,0}_x$.
\end{proof}

Next we prove that the solutions to the control equation \eqref{eq:X-M-def} are bounded in $L^p(\Omega:E_T)$ uniformly for $u \in \mathscr{A}_N$, and bounded subsets of $\e>0$ and bounded subsets of $x \in E$.

\begin{theorem} \label{thm:control-a-priori}
  For $T>0$ and $p>1$, there exists $C_{T,p}>0$ such that for any $N>0$, $u \in \mathscr{A}_N$, $\e>0$, and $x \in E$,
  \begin{equation} \label{eq:control-a-priori}
    \E \left|X^{\e,u}_x\right|_{E_T}^p \leq C_{T,p} e^{C_{T,p}(\e^{\frac{p}{2}} + N^{\frac{p}{2}})}  \left( 1 +|x|_E^p\right).
  \end{equation}
\end{theorem}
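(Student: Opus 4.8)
The plan is to use the fixed-point identity $X^{\e,u}_x = \mathcal{M}\big(S(\cdot)x + Y^u(X^{\e,u}_x) + \sqrt{\e}\,Z(X^{\e,u}_x)\big)$ from \eqref{eq:X-M-def} together with the global Lipschitz bound on $\mathcal{M}$ from Theorem \ref{thm:M-Lipschitz}. Since $\mathcal{M}(0)(t) = 0$ for $z \equiv 0$ (because $F$ has linear growth and the only solution to the homogeneous fixed-point equation starting from $0$ is $0$; alternatively one observes $\mathcal{M}(0) = \mathcal{M}(0) $ solves $v(t) = \int_0^t S(t-s)F(s,v(s))ds$, which by the argument in Theorem \ref{thm:M-Lipschitz} with $\tilde z = 0$ forces $|v(t)|_E \le L(T)\int_0^t |v(s)|_E ds$, hence $v \equiv 0$ — but in fact it is cleaner to just write $|\mathcal{M}(z)|_{E_T} \le |\mathcal{M}(z) - \mathcal{M}(0)|_{E_T} + |\mathcal{M}(0)|_{E_T} \le C_T|z|_{E_T} + C_T$ using linear growth of $F$ in the estimate for $\mathcal{M}(0)$). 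Consequently there is $C_T$ with
\[
  \E\big|X^{\e,u}_x\big|_{E_T}^p \le C_{T,p}\Big(1 + |x|_E^p + \E\big|Y^u(X^{\e,u}_x)\big|_{E_T}^p + \e^{p/2}\,\E\big|Z(X^{\e,u}_x)\big|_{E_T}^p\Big),
\]
using $|S(\cdot)x|_{E_T} \le |x|_E$ by contractivity of the semigroup.

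Next I would estimate the two stochastic/control terms using the linear-growth half of Assumption \ref{assum:sigma} (the bound \eqref{eq:sigma-lin-growth}), which gives $\max_i |R_{i\cdot}(s,\psi(s))|_E \le L(T)(1 + |\psi(s)|_E)$. Applying Theorem \ref{thm:Z-bound} (the stochastic-convolution estimate in the Appendix) exactly as in the existence proof, but with the Lipschitz bound replaced by the linear-growth bound, yields
\[
  \E\big|Z(X^{\e,u}_x)\big|_{E_T}^p \le C_T \int_0^T \big(1 + \E|X^{\e,u}_x|_{E_t}^p\big)\,dt,
\]
and similarly Theorem \ref{thm:control-Y-bounds}, together with $u \in \mathscr{A}_N$ so that $\int_0^T |u_n(s,\xi)|^2 d\xi ds \le N$ a.s., gives
\[
  \E\big|Y^u(X^{\e,u}_x)\big|_{E_T}^p \le C_T N^{p/2} \int_0^T \big(1 + \E|X^{\e,u}_x|_{E_t}^p\big)\,dt.
\]

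Combining the three displays, and writing $\Lambda(t) := \E|X^{\e,u}_x|_{E_t}^p$ (which is finite and non-decreasing by Theorem \ref{thm:exist-uniq}), I obtain an integral inequality of the form
\[
  \Lambda(T) \le C_{T,p}\big(1 + |x|_E^p\big) + C_{T,p}\big(\e^{p/2} + N^{p/2}\big)\int_0^T \big(1 + \Lambda(t)\big)\,dt.
\]
Since the same inequality holds with $T$ replaced by any $t' \le T$ (all constants being monotone in the time horizon, by the structure of $L(\cdot)$ and the Appendix estimates), Gr\"onwall's inequality applied to $1 + \Lambda(t')$ gives $1 + \Lambda(T) \le C_{T,p}(1+|x|_E^p)\exp\!\big(C_{T,p}(\e^{p/2}+N^{p/2})\big)$, which is the claimed bound \eqref{eq:control-a-priori}. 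The one point requiring a little care — and the main technical obstacle — is ensuring the constants from Theorems \ref{thm:Z-bound} and \ref{thm:control-Y-bounds} are uniform in $\e$, in $u \in \mathscr{A}_N$, and in $x$ (they are: they depend only on $T$, $p$, the exponents $\beta,\rho$, and $L(T)$, with the $N$-dependence pulled out explicitly), and that the a priori finiteness of $\Lambda(t)$ holds before the Gr\"onwall step, which is guaranteed by Theorem \ref{thm:exist-uniq} since the solution is $E_T$-valued and the $L^p(\Omega)$ bound is part of the fixed-point construction.
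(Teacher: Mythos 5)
Your proposal is correct and follows essentially the same route as the paper: apply the Lipschitz continuity of $\mathcal{M}$ (Theorem \ref{thm:M-Lipschitz}) to the fixed-point identity \eqref{eq:X-M-def}, bound $\E|Y^u(X^{\e,u}_x)|_{E_T}^p$ and $\E|Z(X^{\e,u}_x)|_{E_T}^p$ via Theorems \ref{thm:control-Y-bounds} and \ref{thm:Z-bound} together with the linear growth of $R$, and close with Gr\"onwall. The only quibble is your parenthetical about $\mathcal{M}(0)$: the claim that $\mathcal{M}(0)=0$ (or that $F$ has linear growth) is not justified, since $g_i$ carries no growth bound and $F(s,0)$ need not vanish, but this is harmless because all that is needed --- and all the paper uses --- is that $|\mathcal{M}(0)|_{E_T}$ is a finite constant depending only on $T$, which follows from the existence theorem for \eqref{eq:M-def}.
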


\begin{proof}
  By \eqref{eq:X-M-def} and the Lipschitz continuity of $\mathcal{M}$, Theorem \ref{thm:M-Lipschitz},
  \begin{align*}
    &\E|X^{\e,u}_x|_{E_T}^p = \E\left|\mathcal{M}\left( S(\cdot)x + Y^u(X^{\e,u}_x) + \sqrt{\e}Z(X^{\e,u}_x) \right)\right|^p_{E_T}\\
    &\leq  C_p\E \left|\mathcal{M}\left( S(\cdot)x + Y^u(X^{\e,u}_x) + \sqrt{\e}Z(X^{\e,u}_x) \right) - \mathcal{M}(0) \right|_{E_T}^p + C_p|\mathcal{M}(0)|_{E_T}^p\\
    &\leq C_{T,p} \left( 1 + |x|_E^p + \E|Y^u(X^{\e,u}_x)|_{E_T}^p + \e^{\frac{p}{2}}\E\left|Z(X^{\e,u}_x) \right|_{E_T}^p \right).
  \end{align*}
  By Theorem \ref{thm:Z-bound}, Theorem \ref{thm:control-Y-bounds} and the fact that $R$ has linear growth,
  for large enough $p$,
  \begin{align*}
    &\E|X^{\e,u}_x|_{E_T}^p
    \leq C_{T,p} \left( 1 +|x|_E+ \left(\e^{\frac{p}{2}} + N^{\frac{p}{2}} \right)\int_0^T \E|X^{\e,u}_x|_{E_t}^pdt\right).
  \end{align*}
  The result follows by Gr\"onwall's inequality.
\end{proof}

\begin{corollary} \label{cor:SPDE-a-priori}
  For $T>0$ and $p>1$, there exists $C_{T,p}>0$ such that for any $\e>0$, and $x \in E$,
  \begin{equation} \label{eq:SPDE-a-priori}
    \E \left|X^{\e}_x\right|_{E_T}^p \leq C_{T,p} e^{C_{T,p}\e^{\frac{p}{2}} } \left( 1 +|x|_E^p\right).
  \end{equation}
\end{corollary}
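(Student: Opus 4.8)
The plan is to derive this corollary as an immediate specialization of Theorem~\ref{thm:control-a-priori} to the trivial control. By Corollary~\ref{cor:exist-uniq}, the uncontrolled mild solution coincides with the controlled solution driven by $u\equiv 0$, that is $X^\e_x = X^{\e,0}_x$. Moreover, the zero control satisfies the constraint \eqref{eq:A_N-def} with any constant on the right-hand side, so $u\equiv 0 \in \mathscr{A}_N$ for every $N>0$ (indeed for $N=0$).

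Consequently I would apply the a priori bound \eqref{eq:control-a-priori} to $X^{\e,0}_x$ with control norm $N$: for every $N>0$,
\[
  \E\left|X^\e_x\right|_{E_T}^p = \E\left|X^{\e,0}_x\right|_{E_T}^p \leq C_{T,p}\, e^{C_{T,p}\left(\e^{p/2} + N^{p/2}\right)}\left(1 + |x|_E^p\right).
\]
Letting $N \downarrow 0$ on the right-hand side (the left-hand side does not depend on $N$) gives
\[
  \E\left|X^\e_x\right|_{E_T}^p \leq C_{T,p}\, e^{C_{T,p}\e^{p/2}}\left(1 + |x|_E^p\right),
\]
which is exactly \eqref{eq:SPDE-a-priori}.

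There is no genuine obstacle here: the only point worth noting is that the constant $C_{T,p}$ in \eqref{eq:control-a-priori} is uniform in $N$ (it depends only on $T$, $p$, and the structural constants from the assumptions), so the limit $N\downarrow 0$ is legitimate. If one prefers to avoid the limiting argument, it suffices to observe that taking $N=1$ (say) already yields the stated bound with a possibly larger constant, since $e^{C_{T,p}\e^{p/2}} \le e^{C_{T,p}(\e^{p/2}+1)}$, and one absorbs the factor $e^{C_{T,p}}$ into $C_{T,p}$.
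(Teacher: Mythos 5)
Your proposal is correct and matches the paper's own argument: the paper also deduces the bound immediately from Theorem \ref{thm:control-a-priori} via the identity $X^\e_x = X^{\e,0}_x$, with the $N$-dependence disappearing exactly as you note since the constant there is uniform in $N$.
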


\begin{proof}
  This is an immediate consequence of Theorem \ref{thm:control-a-priori} and the fact that $X^\e_x = X^{\e,0}_x$.
\end{proof}

\section{Uniform large deviations principle over bounded subsets of $E$ -- Proof of Theorem \ref{thm:ULDP-bounded-subsets}} \label{S:ULDP-bounded-x}
In this section, we use Corollary \ref{cor:ULDP-suff-cond} to prove that the mild solutions $\{X^\e_x\}_{\substack{\e>0\\x \in E}}$ to \eqref{eq:intro-SPDE} satisfy a uniform large deviations principle that is uniform over bounded subsets of $E$.
%
%For $x \in E$ and $T>0$ define the rate functions $I_{x,T}: E_T \to [0,+\infty)$  by
%\begin{equation} \label{eq:rate-function}
%  I_{x,T}(\varphi) : = \inf\left\{\frac{1}{2}\sum_{n=1}^r\int_0^T \int_{\mathcal{O}} |u_n(s,\xi)|^2 d\xi ds : \varphi = X^{0,u}_x\right\}.
%\end{equation}
%The infimum is taken over $u \in L^2([0,T]\times \mathcal{O}\times \{1,...,r\})$ and $X^{0,u}_x$ is the solution to the control problem \eqref{eq:controlled-mild}.
%
%For $s\geq 0$, $T>0$, and $x \in E$ define the level sets by
%\begin{equation} \label{eq:level-sets}
%  \Phi_{x,T}(s): =  \left\{\varphi \in E_T: I_{x,T}(\varphi) \leq s \right\}.
%\end{equation}
%
%\begin{theorem} \label{thm:ULDP-bounded-subsets}
%  Assume Assumptions \ref{assum:vector-field}, \ref{assum:sigma}, \ref{assum:elliptic-operators}, and \ref{assum:noise}. Then for any $K>0$, any $\delta>0$, and any $s_0\geq 0$,
%  \begin{equation} \label{eq:ULDP-lower-bounded-x}
%    \liminf_{\e \to 0} \inf_{|x|_E \leq K} \inf_{\varphi \in \Phi_{x,T}(s_0)} \left( \e \log \Pro \left(\left| X^\e_x - \varphi \right|_{E_T} <\delta \right) + I_{x,T}(\varphi) \right) \geq 0
%  \end{equation}
%  and
%  \begin{equation}
%    \limsup_{\e \to 0} \sup_{|x|_E \leq K} \sup_{s \in [0,s_0]} \left(\e \log \Pro\left(\dist_{E_T}\left(X^\e_x, \Phi_{x,T}(s) \right) \geq \delta \right) + s \right) \leq 0.
%  \end{equation}
%\end{theorem}

\begin{proof}[Proof of Theorem \ref{thm:ULDP-bounded-subsets}]
  By Corollary \ref{cor:ULDP-suff-cond}, it is sufficient to prove that for any $K>0$, $N>0$, and $\delta>0$,
  \begin{equation}
    \lim_{\e \to 0} \sup_{|x|_E \leq K} \sup_{u \in \mathscr{A}_N} \Pro \left( \left|X^{\e,u}_x - X^{0,u}_x \right|_{E_T}>\delta \right) = 0.
  \end{equation}
  Let
  \begin{equation} \label{eq:Y-e-u}
    Y^{\e,u}_{x}(t) = \int_0^t S(t-s) R(s,X^{\e,u}_x(s))Q u(s)ds
  \end{equation}
  and
  \begin{equation} \label{eq:Z-e-u}
    Z^{\e,u}_{x}(t) = \int_0^t S(t-s)R(s,X^{\e,u}_x(s))dw(s).
  \end{equation}
  Using this notation,
  \[X^{\e,u}_x = \mathcal{M}\left(S(\cdot) x + Y^{\e,u}_x + \sqrt{\e} Z^{\e,u}_x\right),\]
  where $\mathcal{M}:E_T \to E_T$ solves \eqref{eq:M-def}.

  By the Lipschitz continuity of $\mathcal{M}$ (Theorem \ref{thm:M-Lipschitz}),
  \begin{align} \label{eq:X-diff-bounded}
    & |X^{\e,u}_{x} - X^{0,u}_{x}|_{E_T} %\nonumber \\
    \leq C_T |Y^{\e,u}_{x} - Y^{0,u}_{x}|_{E_T} + C_T \sqrt{\e} |Z^{\e,u}_{x}|_{E_T}.
  \end{align}
  By Theorem \ref{thm:control-Y-bounds}, for $u \in \mathcal{A}_N$, $\e>0$, and $x \in {E}$,
  \begin{align*}
    &|Y^{\e,u}_{x} - Y^{0,u}_{x}|_{E_T}\\
    & \leq CN^{\frac{1}{2}} \sup_{t \in [0,T]}\left(\int_0^t (t-s)^{-\frac{\beta(\rho-2)}{\rho}} \max_{i \in \{1,...,r\}} |R_{i \cdot}(s,X^{\e,u}_{x}(s)) - R_{i \cdot}(s,X^{0,u}_{x}(s))|^2_{E} ds  \right)^{\frac{1}{2}}.
  \end{align*}
  By the Lipschitz continuity of $R$ (Assumption \ref{assum:sigma}),
  \begin{align*}
    &|Y^{\e,u}_{x} - Y^{0,u}_{x}|_{E_T}\\
    &\leq C N^{\frac{1}{2}}\sup_{t \in [0,T]}\left(\int_0^t (t-s)^{-\frac{\beta(\rho-2)}{\rho}}  |X^{\e,u}_{x}(s) - X^{0,u}_{x}(s)|_E^2 ds  \right)^{\frac{1}{2}}.
  \end{align*}
  By Assumption \eqref{eq:beta-rho-relation}, $\frac{\beta(\rho-2)}{\rho}<1$. %Therefore,
  For $p>\frac{2}{1 - \frac{\beta(\rho-2)}{\rho}}$, the H\"older inequality shows that
  \begin{align} \label{eq:Y-diff-bound}
    &|Y^{\e,u}_{x} - Y^{0,u}_{x}|_{E_T}^p %\nonumber\\
    \leq C_{p,T}N^{\frac{p}{2}} \int_0^T |X^{\e,u}_{x} - X^{0,u}_{x}|_{E_t}^p dt
  \end{align}

  Let $\alpha \in \left(0, \frac{1}{2}\left(1 - \frac{\beta(\rho-2)}{\rho} \right) \right)$, $\gamma \in (0,\alpha)$, and $p> \max\left\{\frac{1}{\alpha-\gamma}, \frac{d}{\gamma} \right\}$. By Theorem \ref{thm:Z-bound},
  \begin{align}
    &\E|Z^{\e,u}_x|_{E_T}^p \nonumber\\
    &\leq C_{T,p} \E \int_0^T \left(\int_0^t (t-s)^{-2\alpha - \frac{\beta(\rho-2)}{\rho}} \max_{i \in \{1,...r\}}|R_{i \cdot}(s,X^{\e,u}_x(s))|_{E}^2ds \right)^{\frac{p}{2}}dt.
  \end{align}
  By the linear growth of $R$, and the fact that $-2\alpha - \frac{\beta(\rho-2)}{\rho}>-1$,
  \begin{align*}
    &\E|Z^{\e,u}_x|_{E_T}^p \leq C_{T,p} \left( 1 +  \E|X^{\e,u}_x|_{E_T}^p\right).
  \end{align*}
  By \eqref{eq:control-a-priori},
  \begin{align} \label{eq:Z-control-bound}
    &\E|Z^{\e,u}_x|_{E_T}^p \leq C_{T,p} e^{\left( \e^{\frac{p}{2}} + N^{\frac{p}{2}} \right)C_{T,p}} \left(1 + |x|_E^p \right).
  \end{align}

  Therefore by \eqref{eq:X-diff-bounded}, \eqref{eq:Y-diff-bound}, and \eqref{eq:Z-control-bound},
  \begin{align*}
    &\E |X^{\e,u}_x - X^{0,u}_x|_{E_T}^p \\
     &\leq C_{T,p}N^{\frac{p}{2}} \int_0^T \E|X^{\e,u}_{x} - X^{0,u}_{x}|_{E_t}^p dt + C_{T,p} \e^{\frac{p}{2}}e^{\left( \e^{\frac{p}{2}} + N^{\frac{p}{2}} \right)C_{T,p}} \left(1 + |x|_E^p \right).
  \end{align*}
  By Gr\"onwall's inequality, for any $K>0$,
  \[\sup_{|x|_E \leq K}\sup_{u \in \mathcal{A}_N}\E |X^{\e,u}_x - X^{0,u}_x|_{E_T}^p \leq \e^{\frac{p}{2}} C_{T,p}e^{\left( \e^{\frac{p}{2}} + N^{\frac{p}{2}} \right)C_{T,p}} \left( 1 + K^p\right).\]
  By the Chebyshev inequality,
  \[\lim_{\e \to 0} \sup_{|x|_E \leq K}\sup_{u \in \mathcal{A}_N}\Pro \left( |X^{\e,u}_x - X^{0,u}_x|_{E_T}>\delta \right) =
    0.\]
    Then Theorem \ref{thm:ULDP-bounded-subsets} is a consequence of Corollary \ref{cor:ULDP-suff-cond}.
\end{proof}

\section{Uniform large deviations when $\sigma$ is uniformly bounded -- Proof of Theorem \ref{thm:ULDP-sigma-bounded}} \label{S:sigma-bounded}
%In Theorem \ref{thm:ULDP-bounded-subsets} we showed that under Assumptions \ref{assum:vector-field}, \ref{assum:sigma}, \ref{assum:elliptic-operators}, and \ref{assum:noise}, the mild solution \eqref{eq:mild-def} satisfies a uniform large deviations principe that is uniform over bounded subsets of initial conditions. In this section, we analyze a common situation in which the large deviations principle is uniform over all $E$-valued initial conditions.

\begin{proof}[Proof of Theorem \ref{thm:ULDP-sigma-bounded}]
  By Corollary \ref{cor:ULDP-suff-cond} it suffices to show that for any $\delta>0$ and $N>0$,
  \begin{align*}
    \lim_{\e \to 0} \sup_{x \in E} \sup_{u \in \mathcal{A}_N} \Pro \left( |X^{\e,u}_x - X^{0,u}_x|_{E_T}>\delta\right)=0.
  \end{align*}

  Let $Y^{\e,u}_x$ and $Z^{\e,u}_x$ be the solutions to \eqref{eq:Y-e-u} and \eqref{eq:Z-e-u}. Then
  \[X^{\e,u}_x = \mathcal{M}(S(\cdot) x + Y^{\e,u}_x + \sqrt{\e} Z^{\e,u}_x).\]
  By the Lipschitz continuity of $\mathcal{M}$ (Theorem \ref{thm:M-Lipschitz}), \eqref{eq:X-diff-bounded} holds.
  By Theorem \ref{thm:control-Y-bounds}, \eqref{eq:Y-diff-bound} holds.
  By Theorem \ref{thm:Z-bound}, for large enough $p>\frac{2}{1 - \frac{\beta(\rho-2)}{\rho}}$, there exists $C_{T,p}>0$ such that
  \begin{align}
    &\E|Z^{\e,u}_x|_{E_T}^p \nonumber\\
    &\leq C_{T,p} \E \int_0^T \left(\int_0^t (t-s)^{-2\alpha - \frac{\beta(\rho-2)}{\rho}} \max_{i \in \{1,...r\}}|R_{i \cdot}(s,X^{\e,u}_x(s))|_{E}^2ds \right)^{\frac{p}{2}}dt.
  \end{align}
  By \eqref{eq:sigma-bounded},
  \[\sup_{s \in [0,T]} \sup_{ x \in E} \sup_{n \in \{1,...r\}}|R_{\cdot n}(s,X)|_E \leq L(T).\]
  Because $-2\alpha - \frac{\beta(\rho-2)}{\rho}>-1$,
  \begin{equation}  \label{eq:Z-unif-bound}
    \sup_{\e \in (0,1)}\sup_{x \in E} \sup_{u \in \mathcal{A}_N}\E|Z^{\e,u}_x|^p_{E_T} \leq C_{T,p}.
  \end{equation}

  By \eqref{eq:X-diff-bounded}, \eqref{eq:Y-diff-bound}, and \eqref{eq:Z-unif-bound}, there exists $C_{T,p}>0$ such that for any $x \in E$, $\e>0$, $N>0$, and $u \in \mathscr{A}_N$,
  \begin{equation}
    \E|X^{\e,u}_x - X^{0,u}_x|_{E_T}^p \leq C_{T,p} N^{\frac{p}{2}} \int_0^T \E|X^{\e,u}_x - X^{0,u}_x|_{E_t}^pdt + C_{T,p} \e^{\frac{p}{2}}.
  \end{equation}
  By Gr\"onwall's inequality,
  \begin{equation}
    \E|X^{\e,u}_x - X^{0,u}_x|_{E_T}^p \leq C_{T,p} \e^{\frac{p}{2}} e^{C_{p,T}N^{\frac{p}{2}} T}.
  \end{equation}
  This estimate is uniform with respect to $x \in E$ and therefore
  \begin{equation}
    \lim_{\e \to 0} \sup_{x \in E} \sup_{u \in \mathscr{A}_N}\E|X^{\e,u}_x- X^{0,u}_x|_{E_T}^p = 0.
  \end{equation}
  By the Chebyshev inequality,
  \[\lim_{\e \to 0} \sup_{x \in E}\sup_{u \in \mathcal{A}_N}\Pro \left( |X^{\e,u}_x - X^{0,u}_x|_{E_T}>\delta \right) =
    0.\]
    Then Theorem \ref{thm:ULDP-sigma-bounded} is a consequence of Corollary \ref{cor:ULDP-suff-cond}.
\end{proof}

\section{Uniform large deviations when $f$ has super-linear dissipativity -- Proof of Theorem \ref{thm:ULDP-super-dissip}} \label{S:super-dissip}
%In this section, we assume that the nonlinearity $f$ features super-linear dissipativity. Under this stronger assumption, the mild solutions \eqref{eq:mild-def} will satisfy a uniform large deviations principle that is uniform over all $E$-valued initial conditions  when $\sigma$ is unbounded but does not grow too fast.

\begin{proof}[Proof of Theorem \ref{thm:ULDP-super-dissip}]
By Corollary \ref{cor:ULDP-suff-cond}, it is sufficient to show that for any $\delta>0$ and $N >0$,
\[\lim_{\e \to 0} \sup_{x \in E} \sup_{u \in \mathscr{A}_N} \Pro\left(\left|X^{\e,u}_x - X^{0,u}_x \right|_{E_T}>\delta \right)=0.\]

For $x \in E$, define $\mathcal{M}_x: E_T \to E_T$ by
\begin{equation} \label{eq:M_x-def-not-appendix}
  \mathcal{M}_x(\varphi) := \mathcal{M}(S(\cdot)x + \varphi).
\end{equation}
Under the super-linear dissipativity assumption (Assumption \ref{assum:super-dissip}), $\mathcal{M}_x$ satisfies certain bounds that are independent of the initial condition $x$. These results are presented in Appendix \ref{S:unif-bounds}.

We observe that $X^{\e,u}_x$ can be written as
\begin{equation}
  X^{\e,u}_x = \mathcal{M}_x(Y^{\e,u}_x + \sqrt{\e} Z^{\e,u}_x)
\end{equation}
where
\begin{equation}
  Y^{\e,u}_{x}(t) = \int_0^t S(t-s) R(s,X^{\e,u}_x(s))Qu(s)ds
\end{equation}
\begin{equation}
  Z^{\e,u}_{x}(t) = \int_0^t S(t-s) R(s,X^{\e,u}_x(s))dw(s)
\end{equation}

By Theorem \ref{thm:Z-bound}, for $u \in \mathscr{A}_N$, $\e>0$, $x \in E$, and any $\alpha \in \left(0, \frac{1}{2} \left( 1 - \frac{\beta(\rho-2)}{\rho} \right) \right)$, $\gamma \in (0,\alpha)$, and $p> \max\left\{\frac{1}{\alpha - \gamma}, \frac{d}{\gamma} \right\}$,
\begin{align*}
  &\E|Z^{\e,u}_x|_{E_T}^p \\
  &\leq C_{T,p} \E \int_0^T\left( \int_0^t (t-s)^{-2\alpha - \frac{\beta(\rho-2)}{\rho}}  \max_{i \in \{1,...,r\}} |R_{i \cdot}(s,X^{\e,u}_x(s))|_E^2ds\right)^{\frac{p}{2}}dt.
\end{align*}
By the assumed growth rate on $\sigma$ (and therefore $R$) in Assumption \ref{assum:super-dissip},
\begin{align*}
  &\E|Z^{\e,u}_x|_{E_T}^p \\
  &\leq C_{T,p} \int_0^T\E\left( \int_0^t (t-s)^{-2\alpha - \frac{\beta(\rho-2)}{\rho}}  (1 + |X^{\e,u}_x(s)|_E^{2\nu})ds\right)^{\frac{p}{2}}dt.
\end{align*}
By the fact that $X^\e_x=\mathcal{M}_x(Y^\e_x + Z^\e_x)$ and \eqref{eq:M-bound-super-dissip}, 
\begin{align}
  |X^\e_x(t)|_E &\leq  C_t \left(1 + t^{-\frac{1}{m-1}} + |Y^{\e,u}_x + \sqrt{\e}Z^{\e,u}_x|_{E_t}\right)
\end{align}

Therefore,
\begin{align*}
  &\E|Z^{\e,u}_x|_{E_T}^p \\
  &\leq C_{T,p}\int_0^T\E\left( \int_0^t (t-s)^{-2\alpha - \frac{\beta(\rho-2)}{\rho}}  \left(1 + s^{-\frac{2\nu}{m-1}} + |Y^{\e,u}_x + \sqrt{\e}Z^{\e,u}_x|_{E_s}^{2\nu}\right)ds\right)^{\frac{p}{2}}dt.
\end{align*}
By \eqref{eq:nu} we can choose
\[\alpha:= \frac{1}{2} \left(1 - \frac{\beta(\rho-2)}{\rho} - \frac{2\nu}{m-1}  \right) \in \left(0, \frac{1}{2}\right).\]
Then for any $t>0$, by the properties of the Beta function, for any $t>0$,
\begin{align} \label{eq:Beta-function}
  &\int_0^t (t-s)^{-2\alpha - \frac{\beta(\rho-2)}{\rho}}   s^{-\frac{2\nu}{m-1}}ds \nonumber\\
  &=\int_0^1 (1-s)^{-2\alpha - \frac{\beta(\rho-2)}{\rho}}   s^{-\frac{2\nu}{m-1}}ds \nonumber\\
  &= \frac{\pi}{\sin\left(\frac{2\nu \pi}{m-1} \right)}.
\end{align}
Furthermore, by Assumption \ref{assum:super-dissip}, $\nu \in [0,1]$.
There will exist large enough constants such that
\begin{equation} \label{eq:Z-bound-a-p}
\E|Z^{\e,u}_x|_{E_T}^p \leq C_{T,p} \left( 1 +  \int_0^T  \E|Y^{\e,u}_x + \sqrt{\e}Z^{\e,u}_x|_{E_t}^p dt\right).
\end{equation}

Similarly, by Theorem \ref{thm:control-Y-bounds} and \eqref{eq:M-bound-super-dissip}
\begin{align*}
  |Y^{\e,u}_x|_{E_T} \leq &C_T N^{\frac{1}{2}}\sup_{t \in [0,T]} \int_0^t (t-s)^{-\frac{\beta(\rho-2)}{\rho}} \left(1 + s^{-\frac{2\nu}{m-1}} + |Y^{\e,u}_x + \sqrt{\e} Z^{\e,u}_x|_{E_s}^2 \right)ds \\
  &\leq C_T N^{\frac{1}{2}} \left( 1 + \sup_{t \in [0,T]} \int_0^t (t-s)^{-\frac{\beta(\rho-2)}{\rho}} (|Y^{\e,u}_x + \sqrt{\e} Z^{\e,u}_x|_{E_s}^2 )ds\right).
\end{align*}
By a H\"older inequality,
\begin{equation} \label{eq:Y-bound-a-p}
  |Y^{\e,u}_x|_{E_T}^p \leq C_{T,p} N^{\frac{p}{2}} \left( 1+  \int_0^T |Y^{\e,u}_x + \sqrt{\e} Z^{\e,u}_x|_{E_t}^p dt \right).
\end{equation}
Combining \eqref{eq:Z-bound-a-p} and \eqref{eq:Y-bound-a-p},
\begin{equation*}
  \E|Y^{\e,u}_x + \sqrt{\e} Z^{\e,u}_x|_{E_T}^p \leq  C_{T,p} \left(N^{\frac{p}{2}} + \e^{\frac{p}{2}} \right)\left( 1+  \int_0^T |Y^{\e,u}_x + \sqrt{\e} Z^{\e,u}_x|_{E_t}^p dt \right).
\end{equation*}

By Gr\"onwall's inequality,
\begin{equation}  \label{eq:Y+Z-bound}
  \E|Y^{\e,u}_x + \sqrt{\e}Z^{\e,u}_x|_{E_T}^p \leq C_{T,p}( N^{\frac{p}{2}} + \e^{\frac{p}{2}} ) e^{C_{T,p} ( N^{\frac{p}{2}} + \e^{\frac{p}{2}} )}.
\end{equation}

From Theorem \ref{thm:Z-bound},  \eqref{eq:Beta-function}, and \eqref{eq:Y+Z-bound} we can conclude that
\begin{align} \label{eq:Z-eps-bound-super-dissip}
  &\e^{\frac{p}{2}}\E |Z^{\e,u}_x|_{E_T}^p \nonumber\\
  &\leq C_{T,p}\e^{\frac{p}{2}}  \int_0^T\E\left( \int_0^t (t-s)^{-2\alpha - \frac{\beta(\rho-2)}{\rho}}  \left(1 + s^{-\frac{2\nu}{m-1}} + |Y^{\e,u}_x + \sqrt{\e}Z^{\e,u}_x|_{E_s}^{2\nu}\right)ds\right)^{\frac{p}{2}}dt\nonumber \\
  &\leq \e^{\frac{p}{2}}C_{T,p}(1 +  N^{\frac{p}{2}} + \e^{\frac{p}{2}} ) e^{C_{T,p} ( N^{\frac{p}{2}} + \e^{\frac{p}{2}} )}.
\end{align}
The above bound is uniform over $x \in E$.

The remainder of the proof is very similar to the proofs of Theorems \ref{thm:ULDP-bounded-subsets} and \ref{thm:ULDP-sigma-bounded}. By the Lipschitz continuity of $\mathcal{M}$ (Theorem \ref{thm:M-Lipschitz}),
\begin{align*}
    & |X^{\e,u}_{x} - X^{0,u}_{x}|_{E_T}
    \leq C_T |Y^{\e,u}_{x} - Y^{0,u}_{x}|_{E_T} + C_T \sqrt{\e} |Z^{\e,u}_{x}|_{E_T}
  \end{align*}
By \eqref{eq:Y-diff-bound},
\begin{align*}
    &|Y^{\e,u}_{x} - Y^{0,u}_{x}|_{E_T}^p
    \leq  C_{T,p}N^{\frac{p}{2}} \int_0^T |X^{\e,u}_{x} - X^{0,u}_{x}|_{E_t}^p dt
  \end{align*}
Therefore, \eqref{eq:Z-eps-bound-super-dissip} implies
  \begin{align*}
    &\E |X^{\e,u}_x - X^{0,u}_x|_{E_T}^p \\
    &\leq C_{T,p}N^{\frac{p}{2}} \int_0^T \E|X^{\e,u}_{x} - X^{0,u}_{x}|_{E_t}^p dt + \e^{\frac{p}{2}}C_{T,p}( N^{\frac{p}{2}} + \e^{\frac{p}{2}} ) e^{C_{T,p} ( N^{\frac{p}{2}} + \e^{\frac{p}{2}} )}  .
  \end{align*}
  By Gr\"onwall's inequality, there exists $C_{N,T,p}>0$ such that for all $\e \in (0,1)$,
  \[\sup_{x \in E}\sup_{u \in \mathcal{A}_N}\E |X^{\e,u}_x - X^{0,u}_x|_{E_T}^p \leq C_{N,T,p} \e^{\frac{p}{2}}\]
  By the Chebyshev inequality,
  \[\lim_{\e \to 0} \sup_{x \in E}\sup_{u \in \mathcal{A}_N}\Pro \left( |X^{\e,u}_x - X^{0,u}_x|_{E_T}>\delta \right) = 0.\]
  Then Theorem \ref{thm:ULDP-super-dissip} is a consequence of Corollary \ref{cor:ULDP-suff-cond}.
\end{proof}

%\begin{example}
%  Let $m$ be an odd number. Let $\alpha_i>0$, $i \in \{1,...r\}$. Let $c_{i,k} \in \mathbb{R}$ for $i \in \{1,...r\}$ and $k \in \{0,...,m-1\}$. Let $\tilde{g}_i$ be $m$-degree polynomials with negative leading terms
%  \[\tilde g_i(x) = -\alpha_i x^m + \sum_{k=0}^{m-1} c_{i,k} x^k.\]
%  Let $\tilde h_i: [0,+\infty)\times \bar{\mathcal{O}}\times \mathbb{R}^r \to \mathbb{R}$ be Lipschitz continuous satisfying \eqref{eq:h-Lip-assum}. Let
%  \[f_i(t,\xi, (x_1,...,x_r)) = \tilde g_i(x_i) + \tilde h_i(t,\xi,(x_1,...,x_r))\]
%  There exists $\tilde{c}_{i,1}>0$ such that
%  \[g_i(x):= \tilde{g}_i(x) - \tilde{c}_{i}x\]
%  is decreasing.
%  Let
%  \[h_i(t,\xi,(x_1,...,x_r)) := \tilde{h}_i(t,\xi,(x_1,...,x_r)) +\tilde{c}_{i} x_i.\]
%  Then $g_i$ is decreasing and satisfies \eqref{eq:g-super-dissip}, $h$ satisfies \eqref{eq:h-Lip-assum}, and $f_i(t,\xi,x) = g_i(x_i) + h_i(t,\xi,x)$.
%\end{example}

%%%%%%%%%%%%%%%%%%%%%%%%%%%%%%%%%%%%%%%%%%%%%%
%% Single Appendix:                         %%
%%%%%%%%%%%%%%%%%%%%%%%%%%%%%%%%%%%%%%%%%%%%%%
%\begin{appendix}
%\section*{???}%% if no title is needed, leave empty \section*{}.
%\end{appendix}
%%%%%%%%%%%%%%%%%%%%%%%%%%%%%%%%%%%%%%%%%%%%%%
%% Multiple Appendixes:                     %%
%%%%%%%%%%%%%%%%%%%%%%%%%%%%%%%%%%%%%%%%%%%%%%
\begin{appendix}
\section{The left derivative of the supremum norm} \label{S:appendix-subdiff}
Let $E = \left\{x \in  C(\bar{\mathcal{O}}\times \{1,...,r\}): x(\xi) = 0 \text{ for } \xi \in \partial \mathcal{O} \right\}$ endowed with the supremum norm
\[|x|_E := \sup_{\xi \in \bar{\mathcal{O}}}\sup_{i \in \{1,...,r\}} |x_i(\xi)|.\]

%The dual space of $E$, denoted $E^\star$ is the space of continuous bounded linear functionals from $E \to \mathbb{R}$. We will denote the duality by
%\[\left<x,x_\star\right>_{E,E^\star}.\]
%The norm on $E^\star$ is defined by
%\[|x_\star|_{E^\star}: = \sup_{|x|_E \leq 1} |\left<x,x_\star \right>_{E,E^\star}|.\]
%The Riesz representation theorem (see \cite{conway}) proves that $E^\star$ is isometric the set of signed Radon measures on $\bar{\mathcal{O}}\times \{1,...r\}$.
%
%The subdifferential of a function $x \in E$ is defined to be the subset of $E^\star$
%\begin{equation} \label{eq:subdiff}
%  \partial |x|_E : = \left\{x^\star \in E^\star : |x^\star|_{E^\star} = 1 \text{ and } \left<x,x^\star\right>_{E,E^\star}=1 \right\}.
%\end{equation}
%
%\begin{proposition}
%  Let $x \in E$. Let $(i,\xi) \in \{1,...r\}\times \bar{\mathcal{O}}$ be maximizers such that
%  \[|x|_E = |x_i(\xi)|.\]
%  Then the signed delta measure
%  \[\delta_{i,\xi}\sgn(x_i(\xi)) \in \partial |x|_E.\]
%\end{proposition}
%\begin{proof}
%  Because this is a delta measure, we can easily check that
%  \[|\delta_{i,\xi}|_{E^\star} = 1.\]
%  Furthermore,
%  \[\left<x, \delta_{i,\xi} \right>_{E,E^\star} = x_i(\xi)\sgn(x_i(\xi)) = |x|_E.\]
%\end{proof}

For a real-valued function $\phi: [0,T] \to \mathbb{R}$, define the left derivative by
\[\frac{d^-\phi}{dt}(t) = \limsup_{h \downarrow 0} \frac{\phi(t)- \phi(t-h)}{h}.\]

Let $\psi: [0,T]\times \bar{\mathcal{O}}\times \{1,...r\} \to \mathbb{R}$ be differentiable in its first argument.

\begin{proposition}[Proposition D.4 of \cite{dpz}] \label{prop:left-deriv}
  Assume that $\psi: [0,T]\times \bar{\mathcal{O}}\times \{1,...r\}$ has a continuous partial derivative in time.
  The left-derivative of the $E$ norm is bounded above by
  \begin{equation}
    \frac{d^-}{dt} |\psi_\cdot(t,\cdot)|_E \leq \left[  \frac{\partial \psi}{\partial t}\right]_{i_t}(t,\xi_t) \sgn(\psi_{i_t}(t,\xi_t))
  \end{equation}
  for any maximizer/minimizer $(i_t, \xi_t) \in \{1,...,r\}\times \bar{\mathcal{O}}$ such that
  \begin{equation}
    |\psi_{i_t}(t,\xi_t)|=|\psi_\cdot(t,\cdot)|_E.
  \end{equation}
\end{proposition}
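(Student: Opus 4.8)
The plan is to reduce the statement to an elementary pointwise inequality together with the assumed differentiability of $\psi$ in time. First I would record two trivial facts. Since $\bar{\mathcal{O}}\times\{1,\dots,r\}$ is compact and $\psi(t,\cdot,\cdot)$ is continuous, the supremum defining $|\psi_\cdot(t,\cdot)|_E$ is attained, so a maximizer/minimizer $(i_t,\xi_t)$ exists; fix one such pair. Second, for any real numbers $a,b$ one has the inequality $|a|-|b|\le\sgn(a)\,(a-b)$. Indeed, $|a|=\sgn(a)\,a$ always, and $|b|\ge\sgn(a)\,b$ always (this is clear when $\sgn(a)=\pm1$, and when $\sgn(a)=0$ it reads $0\le|b|$), so $|a|-|b|\le\sgn(a)\,a-\sgn(a)\,b$.

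Next I would set $\phi(t):=|\psi_\cdot(t,\cdot)|_E$ and bound the backward difference quotient. For $h>0$ the definition of the supremum norm gives $\phi(t-h)\ge|\psi_{i_t}(t-h,\xi_t)|$, while $\phi(t)=|\psi_{i_t}(t,\xi_t)|$ by the choice of $(i_t,\xi_t)$; hence
\[
\phi(t)-\phi(t-h)\le |\psi_{i_t}(t,\xi_t)|-|\psi_{i_t}(t-h,\xi_t)|.
\]
Applying the elementary inequality above with $a=\psi_{i_t}(t,\xi_t)$ and $b=\psi_{i_t}(t-h,\xi_t)$, and then dividing by $h>0$, yields
\[
\frac{\phi(t)-\phi(t-h)}{h}\le \sgn(\psi_{i_t}(t,\xi_t))\,\frac{\psi_{i_t}(t,\xi_t)-\psi_{i_t}(t-h,\xi_t)}{h}.
\]

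Finally I would pass to the limit $h\downarrow0$. Since $\psi$ has a continuous partial derivative in its first variable, the real-valued map $s\mapsto\psi_{i_t}(s,\xi_t)$ is differentiable at $s=t$, so the difference quotient on the right converges to $\left[\frac{\partial\psi}{\partial t}\right]_{i_t}(t,\xi_t)$; multiplying by the fixed scalar $\sgn(\psi_{i_t}(t,\xi_t))$ preserves this convergence. Taking $\limsup_{h\downarrow0}$ of both sides (the right-hand side has a genuine limit, which therefore bounds the $\limsup$ of the left-hand side) gives
\[
\frac{d^-}{dt}|\psi_\cdot(t,\cdot)|_E\le \left[\frac{\partial\psi}{\partial t}\right]_{i_t}(t,\xi_t)\,\sgn(\psi_{i_t}(t,\xi_t)),
\]
which is the claim. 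There is no genuine obstacle in this argument; the only point that needs a moment's care is the treatment of the absolute value, and the inequality $|a|-|b|\le\sgn(a)(a-b)$ handles it uniformly in the sign of $\psi_{i_t}(t,\xi_t)$ — including the degenerate case $\psi_{i_t}(t,\xi_t)=0$, where $\phi(t)=0\le\phi(t-h)$ forces the left derivative to be $\le0=$ the right-hand side — so no separate case analysis is needed.
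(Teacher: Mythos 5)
Your proposal is correct and follows essentially the same route as the paper: fix a maximizer $(i_t,\xi_t)$, bound the backward difference quotient of the supremum norm by the signed difference quotient of $\psi_{i_t}(\cdot,\xi_t)$ (the paper uses exactly your inequality in the form $\psi_{i_t}(t-h,\xi_t)\sgn(\psi_{i_t}(t,\xi_t))\le|\psi_\cdot(t-h,\cdot)|_E$ together with $|\psi_\cdot(t,\cdot)|_E=\psi_{i_t}(t,\xi_t)\sgn(\psi_{i_t}(t,\xi_t))$), and pass to the limit $h\downarrow 0$ using differentiability in time. Your explicit handling of the degenerate case $\psi_{i_t}(t,\xi_t)=0$ is a minor extra precaution, not a different argument.
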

\begin{proof}
  Fix $t >0$. Let $(i_t, \xi_t) \in \{1,....r\}\times \bar{\mathcal{O}}$ be a maximizer/minimizer of $\psi_\cdot(t,\cdot)$ such that
  \[|\psi_{i_t}(t,\xi_t)|=|\psi|_E.\]
  Notice that for another time $h\in(0,t)$,
  \[\psi_{i_t}(t-h,\xi_t)\sgn(\psi_{i_t}(t,\xi_t)) \leq |\psi_\cdot(t-h,\cdot)|_E.\]
  The left-derivative of $|\psi(t)|_E$ is
  \begin{align*}
    &\frac{d^-}{dt} |\psi(t)|_E\\
    &= \limsup_{h \downarrow 0} \frac{|\psi(t)|_E - |\psi(t+h)|_E}{h}\\
    &\leq \limsup_{h \downarrow 0} \frac{\left(\psi_{i_t}(t,\xi_t) - \psi_{i_t}(t-h,\xi_t) \right)\sgn(\psi_{i_t}(t,\xi_t))}{h}\\
    & = \left[\frac{\partial \psi}{\partial t}\right]_{i_t}\left(t,\xi_t \right) \sgn(\psi_{i_t}(t,\xi_t)).
  \end{align*}
\end{proof}

\section{Continuity in time and space of the stochastic convolution } \label{S:stoch-conv}
This appendix collects some results from \cite{c-2003,c-2009-khasminskii} about the continuity of stochastic convolution terms.

Assume Assumptions  \ref{assum:elliptic-operators}, and \ref{assum:noise}. Let $E_T$ be defined by \eqref{eq:E_T-def}. For arbitrary $\sigma \in E_T$, define the multiplication operators $R_n: [0,T]\to \mathscr{L}( L^2(\mathcal{O}))$ such that for any $n \in \{1,...r\}$, $t \in [0,T]$, and $\xi \in \mathcal{O}$, and $f \in L^2(\mathcal{O})$.
\begin{equation} \label{eq:Rn}
  [R_n(t) f](\xi): = \sigma_n(t,\xi)f(\xi).
\end{equation}
In this appendix, we investigate the continuity in time and space of the stochastic convolutions
\begin{equation} \label{eq:Z-mild-def-appendix}
  Z_i(t): = \int_0^t S_i(t-s)\sum_{n=1}^rR_{n}(s)dw_n(s)
\end{equation}
solving
\[dZ_i(t) = A_i Z_i(t) + \sum_{n=1}^r R_{n}(t)dw_n(t).\]
In these expressions, $S_i(t)$ are the semigroups defined in Section \ref{SS:semigroup} generated by the unbounded operators $A_i$ and $w_n$ are Gaussian noises satisfying Assumption \ref{assum:noise}.
$Z_i(t)$ also solves
\[dZ_i(t) = B_i Z_i(t) + L_i Z_i(t) + \sum_{n=1}^r R_{n}(t)dw_n(t),\]
where $B_i$ and $L_i$ are defined in Proposition \ref{prop:A_i}.
The mild solution of $Z_i$ solves
\begin{equation} \label{eq:Z-mild}
  Z_i(t) = \int_0^t T_i(t-s) L_i Z_i(s)ds + \tilde{Z}_i(t)
\end{equation}
where
\begin{equation} \label{eq:Z-stoch-conv-appendix}
  \tilde{Z}_i(t):= \int_0^t T_i(t-s) \sum_{n=1}^rR_{n}(s)dw_n(s)
\end{equation}
where $T_i(t)$ is the semigroup generated by the realization of $\mathcal{B}_i$ in $L^2(\mathcal{O})$ where $\mathcal{A}$ satisfies Assumption \ref{assum:elliptic-operators} and $w$ satisfies Assumption \ref{assum:noise}.
Similar results can be found in Section 4 of \cite{c-2003}.
For $i \in \{1,...,r\}$, let $T_i(t)$ be the semigroup on $H=L^2(\mathcal{O})$ generated by $B_i$ (see Proposition \ref{prop:A_i}).
There exists a kernel
\begin{equation} \label{eq:kernel-def}
  K_i(t,\xi,\eta): = \sum_{k=1}^\infty e^{-\alpha_{i,k} t} e_{i,k}(\xi)e_{i,k}(\eta)
\end{equation}
such that for any $\varphi \in L^2(\mathcal{O})$ and $t>0$,
\begin{equation} \label{eq:kernel}
  [T_i(t) \varphi](\xi) = \int_\mathcal{O} K_i(t,\xi,\eta)\varphi(\eta)d\eta.
\end{equation}

The $T_i(t)$  semigroups have many useful smoothing properties including for $\gamma \in (0,1)$, $p>1$,
\begin{align}
  &|T_i(t) \varphi|_{W^{\gamma,p}(\mathcal{O})} \leq C t^{-\frac{\gamma}{2}} |\varphi|_{L^p(\mathcal{O})} \label{eq:semigroup-regularization-Sobolev}\\
  &|T_i(t) \varphi|_{C(\bar {\mathcal{O}})}  \leq C t^{-\frac{1}{2}} |\varphi|_{C^{-1}(\bar {\mathcal{O}})} \label{eq:semigroup-regularization-C-1}
\end{align}

We use the stochastic factorization method of Da Prato and Zabczyk \cite{dpz}. For $\alpha \in \left( 0, \frac{1}{2} \right)$, let
\begin{equation} \label{eq:Z-alpha-appendix}
  \tilde{Z}_{i,\alpha}(\tau):= \int_0^\tau (\tau-s)^{-\alpha}T_i(\tau-s)\sum_{n=1}^r R_{n}(s)dw_n(s).
\end{equation}
Then because $\int_s^t (t-\tau)^{\alpha-1}(\tau-s)^{-\alpha}d\tau = \frac{\pi}{\sin(\pi \alpha)}$,
\begin{equation} \label{eq:factorization}
  \tilde{Z}_i(t) = \frac{\sin(\pi \alpha)}{\pi}\int_0^t (t-\tau)^{1-\alpha}\tilde{Z}_{i,\alpha}(\tau)d\tau.
\end{equation}

\begin{lemma} \label{lem:sum-of-semigroup}
  Let  $\sigma \in E$. Let $R_n \in \mathscr{L}(L^2(\mathcal{O}))$ be given by $[R_n h](\xi) = \sigma(\xi)h(\xi)$.
   Let $\{f_{n,j}\}$ and $\beta>0$ be as in Assumption \ref{assum:noise}.
  Then for any $i \in \{1,...,r\}$, $t>0$, and $\xi \in \mathcal{O}$,
  \begin{equation} \label{eq:sum-of-squares}
    \sum_{j=1}^\infty  \left(\left(T_i(t) \sum_{n=1}^r R_n f_{n,j} \right)(\xi) \right)^2
    \leq C t^{-\beta}  |\sigma|_{E}^2.
  \end{equation}
\end{lemma}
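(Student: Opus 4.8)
The plan is to combine the integral–kernel representation \eqref{eq:kernel}–\eqref{eq:kernel-def} of the self-adjoint semigroup $T_i(t)$ with Parseval's identity in $L^2(\mathcal{O})$. First I would observe that for fixed $t>0$ and $\xi\in\mathcal{O}$ the function $\eta\mapsto K_i(t,\xi,\eta)\sigma(\eta)$ lies in $L^2(\mathcal{O})$, so that
\[\left(T_i(t)R_nf_{n,j}\right)(\xi)=\int_{\mathcal{O}}K_i(t,\xi,\eta)\sigma(\eta)f_{n,j}(\eta)\,d\eta=\big\langle K_i(t,\xi,\cdot)\sigma,\,f_{n,j}\big\rangle_{L^2(\mathcal{O})}.\]
Using the elementary bound $\big(\sum_{n=1}^r a_n\big)^2\le r\sum_{n=1}^r a_n^2$ and then Parseval's identity for the orthonormal basis $\{f_{n,j}\}_{j}$ of $L^2(\mathcal{O})$ (Assumption \ref{assum:noise}), this gives
\[\sum_{j=1}^\infty\left(\Big(T_i(t)\sum_{n=1}^rR_nf_{n,j}\Big)(\xi)\right)^2\le r\sum_{n=1}^r\sum_{j=1}^\infty\big\langle K_i(t,\xi,\cdot)\sigma,f_{n,j}\big\rangle_{L^2}^2=r\sum_{n=1}^r\big|K_i(t,\xi,\cdot)\sigma\big|_{L^2(\mathcal{O})}^2.\]

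Next I would bound $\big|K_i(t,\xi,\cdot)\sigma\big|_{L^2(\mathcal{O})}^2\le|\sigma|_E^2\,\big|K_i(t,\xi,\cdot)\big|_{L^2(\mathcal{O})}^2$ (pointwise in $\xi$, using $|\sigma(\eta)|\le|\sigma|_E$), and then compute the $L^2$-norm of the kernel by expanding \eqref{eq:kernel-def} and invoking the orthonormality of $\{e_{i,k}\}_k$ in $L^2(\mathcal{O})$:
\[\big|K_i(t,\xi,\cdot)\big|_{L^2(\mathcal{O})}^2=\int_{\mathcal{O}}\Big(\sum_{k=1}^\infty e^{-\alpha_{i,k}t}e_{i,k}(\xi)e_{i,k}(\eta)\Big)^2\,d\eta=\sum_{k=1}^\infty e^{-2\alpha_{i,k}t}e_{i,k}(\xi)^2\le\sum_{k=1}^\infty e^{-2\alpha_{i,k}t}|e_{i,k}|_{\tilde{E}}^2.\]
Since $\beta\in(0,1)$, the map $x\mapsto x^\beta e^{-2x}$ is bounded on $[0,\infty)$ by some $C_\beta$, so $e^{-2\alpha_{i,k}t}\le C_\beta(\alpha_{i,k}t)^{-\beta}=C_\beta t^{-\beta}\alpha_{i,k}^{-\beta}$ for every $k$ (note $\alpha_{i,k}>0$, as otherwise \eqref{eq:alpha-sum} would diverge). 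Hence the last sum is at most $C_\beta t^{-\beta}\sum_{i=1}^r\sum_{k=1}^\infty\alpha_{i,k}^{-\beta}|e_{i,k}|_{\tilde{E}}^2$, which is finite by \eqref{eq:alpha-sum}. Absorbing the factor $r$ and this finite constant into $C$ yields \eqref{eq:sum-of-squares}.

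The routine but slightly delicate points will be the bookkeeping around the kernel representation — namely that the series $\sum_k e^{-\alpha_{i,k}t}e_{i,k}(\xi)e_{i,k}(\cdot)$ converges in $L^2(\mathcal{O},d\eta)$ (which follows since its coefficients are square-summable by \eqref{eq:alpha-sum}) and that the interchange of summation and integration above is thereby justified. I do not expect a genuine obstacle here; the only real ``trick'' is to postpone taking the supremum over $\xi$ until the very last step, replacing $e_{i,k}(\xi)^2$ by $|e_{i,k}|_{\tilde{E}}^2$, so that the bound is automatically uniform in $\xi$ and matches the right-hand side of \eqref{eq:sum-of-squares}.
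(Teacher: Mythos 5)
Your argument is correct and follows essentially the same route as the paper: kernel representation of $T_i(t)$, Parseval/Bessel over the orthonormal basis $\{f_{n,j}\}_j$, bounding the multiplier by $|\sigma|_E$, expanding $|K_i(t,\xi,\cdot)|_{L^2}^2$ via the orthonormality of $\{e_{i,k}\}_k$, and the elementary bound $\sup_{x>0}x^\beta e^{-x}<\infty$ together with \eqref{eq:alpha-sum}. The only cosmetic difference is that the paper keeps the components $\sigma_n$ of $\sigma\in E$ (each $R_n$ multiplies by $\sigma_n$) while you write a scalar $\sigma$, but since every component is bounded by $|\sigma|_E$ this does not affect the estimate.
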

\begin{proof}
  Using the Kernel representation of the semigroup \eqref{eq:kernel},
  \begin{align*}
    &\sum_{j=1}^\infty  \left(\left(T_i(t) \sum_{n=1}^rR_n f_{n,j} \right)(\xi) \right)^2\\
    &\leq\sum_{j=1}^\infty  \left(  \int_\mathcal{O} \sum_{n=1}^r K_i(t,\xi,\eta) \sigma_n(\eta) f_{n,j}(\eta) d\eta \right)^2\\
    &\leq C\sum_{j=1}^\infty \sum_{n=1}^r \left(  \int_\mathcal{O} K_i(t,\xi,\eta) \sigma_n(\eta) f_{n,j}(\eta) d\eta \right)^2.
  \end{align*}
  Because, for fixed $n$, $\{f_{n,j}\}_{n=1}^\infty $ is a complete orthonormal basis of $L^2(\mathcal{O})$,
  \begin{align} \label{eq:kernel-squared}
    &\sum_{n=1}^r\sum_{j=1}^\infty  \left( \int_\mathcal{O} K_i(t,\xi,\eta) \sigma_n(\eta) f_{n,j}(\eta) d\eta \right)^2\nonumber\\
    &\leq  \sum_{n=1}^r\int_\mathcal{O} \left(  K_i(t,\xi,\eta) \sigma_n(\eta) \right)^2d\eta\nonumber\\
    &\leq  C |\sigma|_{E}^2 \int_\mathcal{O} \left( K_i(t,\xi,\eta) \right)^2d\eta.
  \end{align}
  Because $\{e_{i,k}\}_{k=1}^\infty$ are an orthonormal basis of eigenfunctions, by \eqref{eq:kernel-def}
  \begin{align*}
    &\int_\mathcal{O} \left( K_i(t,\xi,\eta) \right)^2d\eta\\
    &\leq \int_\mathcal{O} \left( \sum_{k=1}^\infty e^{-\alpha_{i,k} t} e_{i,k}(\eta) e_{i,k}(\xi) \right)^2 d\eta\\
    &\leq \sum_{k=1}^\infty e^{-2\alpha_{i,k} t} |e_{i,k}(\xi)|^2
  \end{align*}
  Let $c_\beta := \sup_{x>0} x^\beta e^{-x}<+\infty$.
  Then.
  \begin{align*}
    &\sum_{k=1}^\infty e^{-2\alpha_{i,k} t} |e_{i,k}(\xi)|^2\\
    %&\leq \sum_{k=1}^\infty e^{-\alpha_{i,k} t} |e_{i,k}(\xi)|^2\\
    & \leq \sum_{k=1}^\infty 2^{-\beta}\alpha_{i,k}^{-\beta}|e_{i,k}|_{L^\infty(\mathcal{O})}^2 t^{-\beta} (2\alpha_{i,k} t)^{\beta} e^{-2\alpha_{i,k} t}\\
    & \leq c_\beta t^{-\beta} \sum_{k=1}^\infty \alpha_{i,k}^{-\beta} |e_{i,k}|_{L^\infty(\mathcal{O})}^2\\
    &\leq C t^{-\beta}.
  \end{align*}
  The sum is finite by Assumption \ref{assum:noise}. The result now follows from \eqref{eq:kernel-squared}.
\end{proof}

\begin{lemma}
  For each $n \in \{1,...,r\}$, let $\{f_{n,j}\}_{j=1}^\infty$ be the complete orthonormal basis of $L^2(\mathcal{O})$ and let $\{\lambda_{n,j}\}_{j=1}^\infty$ be the eigenvalues from Assumption \ref{assum:noise}. Let $\beta$ and $\rho$ be the constants from Assumption \ref{assum:noise}.  There exists $C>0$ such that for any  $\sigma \in E$, $R_n$ defined as in Lemma \ref{lem:sum-of-semigroup},  $i \in \{1,...,r\}$, $t>0$, and $\xi \in \mathcal{O}$,
  \begin{equation} \label{eq:sum-of-squares-w-lambda}
    \sum_{j=1}^\infty  \left(\left(T_i(t) \sum_{n=1}^rR_n \lambda_{n,j} f_{n,j} \right)(\xi) \right)^2
    \leq C t^{-\frac{\beta(\rho-2)}{\rho}}  |\sigma|_{E}^2.
  \end{equation}
\end{lemma}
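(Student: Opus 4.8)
The plan is to derive \eqref{eq:sum-of-squares-w-lambda} by interpolating between two elementary bounds on the single-mode quantities
\[
  a_{n,j}(t,\xi):=\bigl[T_i(t)\bigl(\sigma_n f_{n,j}\bigr)\bigr](\xi)=\int_{\mathcal O}K_i(t,\xi,\eta)\,\sigma_n(\eta)\,f_{n,j}(\eta)\,d\eta ,
\]
in the same spirit as the proof of Lemma~\ref{lem:sum-of-semigroup}. Using linearity of $T_i(t)$ and the Cauchy--Schwarz inequality in the finite index $n\in\{1,\dots,r\}$,
\[
  \sum_{j=1}^\infty\Bigl(\bigl[T_i(t)\textstyle\sum_{n=1}^r R_n\lambda_{n,j}f_{n,j}\bigr](\xi)\Bigr)^2=\sum_{j=1}^\infty\Bigl(\sum_{n=1}^r\lambda_{n,j}\,a_{n,j}(t,\xi)\Bigr)^2\le r\sum_{n=1}^r\sum_{j=1}^\infty\lambda_{n,j}^2\,a_{n,j}(t,\xi)^2 ,
\]
so it suffices to bound $\sum_{j}\lambda_{n,j}^2 a_{n,j}(t,\xi)^2$ for each fixed $n$ and add the $r$ contributions.

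First I would record two facts, both uniform in $\xi\in\mathcal O$. (i) Since $T_i(t)$ is a contraction semigroup on $C(\bar{\mathcal O})$, for every $t>0$
\[
  |a_{n,j}(t,\xi)|\le\bigl|T_i(t)(\sigma_n f_{n,j})\bigr|_{C(\bar{\mathcal O})}\le|\sigma_n f_{n,j}|_{C(\bar{\mathcal O})}\le|\sigma|_E\,|f_{n,j}|_{\tilde E}.
\]
(ii) Since $\{f_{n,j}\}_{j\ge1}$ is an orthonormal basis of $L^2(\mathcal O)$, Parseval's identity and the eigenfunction expansion \eqref{eq:kernel-def} give, exactly as in the proof of Lemma~\ref{lem:sum-of-semigroup},
\[
  \sum_{j=1}^\infty a_{n,j}(t,\xi)^2=\bigl|K_i(t,\xi,\cdot)\,\sigma_n(\cdot)\bigr|_{L^2(\mathcal O)}^2\le|\sigma|_E^2\sum_{k=1}^\infty e^{-2\alpha_{i,k}t}|e_{i,k}(\xi)|^2\le C\,t^{-\beta}|\sigma|_E^2 ,
\]
the last step using $\sup_{x>0}x^\beta e^{-x}<\infty$ together with the summability \eqref{eq:alpha-sum}.

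The interpolation step comes next, and is where the exponent $\beta(\rho-2)/\rho$ is produced. Fix $n$ and discard the (harmless) indices $j$ for which $|f_{n,j}|_{\tilde E}=0$. For $2<\rho<\infty$ I would use the identity
\[
  \lambda_{n,j}^2 a_{n,j}^2=\Bigl(\lambda_{n,j}^\rho\,|f_{n,j}|_{\tilde E}^2\cdot\frac{a_{n,j}^2}{|f_{n,j}|_{\tilde E}^2}\Bigr)^{2/\rho}\bigl(a_{n,j}^2\bigr)^{(\rho-2)/\rho},
\]
Hölder's inequality with conjugate exponents $\rho/2$ and $\rho/(\rho-2)$, and then bound the first factor with (i) and \eqref{eq:lambda-sum} and the second with (ii):
\[
  \sum_{j=1}^\infty\lambda_{n,j}^2 a_{n,j}^2\le\Bigl(|\sigma|_E^2\sum_{j=1}^\infty\lambda_{n,j}^\rho|f_{n,j}|_{\tilde E}^2\Bigr)^{2/\rho}\Bigl(\sum_{j=1}^\infty a_{n,j}^2\Bigr)^{(\rho-2)/\rho}\le C\,|\sigma|_E^2\,t^{-\beta(\rho-2)/\rho}.
\]
The endpoint $\rho=2$ follows directly from (i) and \eqref{eq:lambda-sum} (the exponent is then $0$) and $\rho=+\infty$ from $\sup_{n,j}\lambda_{n,j}<\infty$ and (ii) (the exponent is then $\beta$). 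Summing the $r$ terms over $n$ yields \eqref{eq:sum-of-squares-w-lambda}.

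I do not expect a genuine obstacle: facts (i) and (ii) are routine consequences of contractivity of $T_i(t)$ and of the kernel bound already established in Lemma~\ref{lem:sum-of-semigroup}, and the only point requiring care is the bookkeeping in the interpolation, namely splitting the powers of $\lambda_{n,j}$, $|f_{n,j}|_{\tilde E}$ and $a_{n,j}$ so that one Hölder factor is controlled by the $t$-independent summability \eqref{eq:lambda-sum} and the other by the $L^2$ kernel estimate carrying the $t^{-\beta}$ singularity. The hypothesis $\rho\in[2,+\infty]$ is precisely what makes the Hölder exponents admissible, and \eqref{eq:beta-rho-relation} will ensure downstream that the singularity $t^{-\beta(\rho-2)/\rho}$ remains integrable.
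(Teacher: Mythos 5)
Your proposal is correct and follows essentially the same route as the paper: a H\"older interpolation with exponents $\rho/2$ and $\rho/(\rho-2)$ between the $\lambda^\rho$-weighted sum, controlled via contractivity of $T_i(t)$ in the sup norm and the summability \eqref{eq:lambda-sum}, and the unweighted sum of squares, controlled by the kernel estimate of Lemma \ref{lem:sum-of-semigroup} carrying the $t^{-\beta}$ singularity. The only differences are cosmetic bookkeeping (you bound $a_{n,j}^2/|f_{n,j}|_{\tilde E}^2$ by $|\sigma|_E^2$ inside the first factor, while the paper bounds $|(T_i(t)R_nf_{n,j})(\xi)|^2\le|\sigma|_E^2|f_{n,j}|_{\tilde E}^2$ directly, and you make the endpoint cases $\rho=2,\ \rho=+\infty$ explicit), so no changes are needed.
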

\begin{proof}
  By the H\"older inequality with exponents $\frac{\rho}{2}$ and $\frac{\rho}{\rho-2}$,
  \begin{align*}
    &\sum_{j=1}^\infty  \left(\left(T_i(t) \sum_{n=1}^rR_n \lambda_{n,j} f_{n,j} \right)(\xi) \right)^2\\
    &\leq C \sum_{j=1}^\infty \sum_{n=1}^r \left(\left(T_i(t) R_n \lambda_{n,j} f_{n,j} \right)(\xi) \right)^2\\
    &\leq C\left(\sum_{j=1}^\infty \sum_{n=1}^r \lambda_{n,j}^\rho |(T_i(t)R_{n} f_{n,j})(\xi)|^2 \right)^{\frac{2}{\rho}}
    \left(\sum_{j=1}^\infty \sum_{n=1}^r |(T_i(t)R_{n} f_{n,j})(\xi)|^2  \right)^{\frac{\rho-2}{\rho}}
  \end{align*}
  Because $T_i(t)$ is a contraction semigroup on $\tilde{E}$,
  \begin{equation}
    |(T_i(t)R_{n}f_{n,j})(\xi)| \leq \sup_{\eta \in \mathcal{O}} |\sigma_{n}(\eta)f_{n,j}(\eta)| \leq |\sigma|_{E}|f_{n,j}|_{L^\infty(\mathcal{O})}.
  \end{equation}
  Therefore,
  \begin{align*}
    &\sum_{j=1}^\infty \sum_{n=1}^r \lambda_{n,j}^\rho |(T_i(t)\sigma_{n} f_{n,j})(\xi)|^2  \leq \left( \sum_{j=1}^\infty \sum_{n=1}^r \lambda_{n,j}^\rho |f_{n,j}|_{L^\infty(\mathcal{O})}^2  \right) |\sigma|_{E}^2.
  \end{align*}
  The summation in the above display is finite by Assumption \ref{assum:noise}. By also applying \eqref{eq:sum-of-squares},
  \begin{align*}
    &\left(\sum_{j=1}^\infty \sum_{n=1}^r \lambda_{n,j}^\rho |(T_i(t)\sigma_{n} f_{n,j})(\xi)|^2 \right)^{\frac{2}{\rho}}
    \left(\sum_{j=1}^\infty |(T_i(t)\sigma_{n} f_{n,j})(\xi)|^2  \right)^{\frac{\rho-2}{\rho}} \\
    &\leq Ct^{-\frac{\beta(\rho-2)}{\rho}} |\sigma|_{E}^2.
  \end{align*}
\end{proof}

\begin{lemma}[Estimates on $\tilde{Z}_{i,\alpha}$] \label{lem:Z_alpha-bound}
  Let $\sigma \in E_T$ and let $\tilde{Z}_{i,\alpha}$ be given by \eqref{eq:Z-alpha-appendix}. For any $p>1$ there exists a constant $C>0$ such that for any $i \in \{1,...,r\}$, $t>0$, $\xi \in D$ ,
  \begin{equation} \label{eq:Z_alpha-bound}
    \E |\tilde{Z}_{i,\alpha}(t,\xi)|^p \leq C \E \left(\int_0^t (t-s)^{-2\alpha - \frac{\beta(\rho-2)}{\rho}} |\sigma(s)|_{E}^2ds\right)^{\frac{p}{2}}
  \end{equation}
\end{lemma}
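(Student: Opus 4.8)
The plan is to recognize $\tilde{Z}_{i,\alpha}(t,\xi)$, for each fixed $(t,\xi)$, as the terminal value of a one-dimensional continuous martingale, apply the Burkholder--Davis--Gundy inequality to its $p$-th moment, and then control the resulting quadratic variation by the pointwise estimate \eqref{eq:sum-of-squares-w-lambda}. First I would expand the stochastic integral in \eqref{eq:Z-alpha-appendix} using the series representation \eqref{eq:noise} of the driving noise, writing
\[\tilde{Z}_{i,\alpha}(t,\xi) = \sum_{n=1}^r \sum_{j=1}^\infty \lambda_{n,j} \int_0^t (t-s)^{-\alpha}\left[T_i(t-s)R_n(s)f_{n,j}\right](\xi)\, dW_{n,j}(s),\]
where the $W_{n,j}$ are mutually independent one-dimensional Brownian motions and, for fixed $s$, $R_n(s)$ is multiplication by $\sigma_n(s,\cdot)\in\tilde E$. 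Holding $t$ and $\xi$ fixed and letting $\tau$ range over $[0,t]$, the process $M_\tau$ obtained by replacing the upper limit $t$ by $\tau$ in these integrals is a continuous $\R$-valued $\mathcal F_\tau$-martingale (the integrand $(t-s)^{-\alpha}[T_i(t-s)R_n(s)f_{n,j}](\xi)$, with $t$ fixed, is adapted because $\sigma\in\hat E_T$ is an adapted continuous random field and the kernel \eqref{eq:kernel-def} acts continuously), with $M_t = \tilde{Z}_{i,\alpha}(t,\xi)$; since the $W_{n,j}$ are independent its quadratic variation is the diagonal sum
\[\langle M\rangle_t = \sum_{n=1}^r \sum_{j=1}^\infty \lambda_{n,j}^2 \int_0^t (t-s)^{-2\alpha}\left|\left[T_i(t-s)R_n(s)f_{n,j}\right](\xi)\right|^2 ds.\]
(If $\sigma$ is deterministic, $M_t$ is Gaussian and one could instead use Gaussian moments directly; the martingale argument has the advantage of covering the adapted random $\sigma$ needed in the applications.)

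Next I would apply the Burkholder--Davis--Gundy inequality to get a constant $C_p>0$ with $\E|\tilde{Z}_{i,\alpha}(t,\xi)|^p = \E|M_t|^p \le C_p\,\E\langle M\rangle_t^{p/2}$, and then interchange the summation with the $ds$-integral inside $\langle M\rangle_t$ (Tonelli, all terms nonnegative), obtaining
\[\E|\tilde{Z}_{i,\alpha}(t,\xi)|^p \le C_p\,\E\left(\int_0^t (t-s)^{-2\alpha}\sum_{n=1}^r\sum_{j=1}^\infty \lambda_{n,j}^2\left|\left[T_i(t-s)R_n(s)f_{n,j}\right](\xi)\right|^2 ds\right)^{p/2}.\]
For each fixed $s$ the inner double sum is exactly the quantity controlled in the proof of \eqref{eq:sum-of-squares-w-lambda} — indeed that proof bounds $\sum_n\sum_j\lambda_{n,j}^2|(T_i(\cdot)R_nf_{n,j})(\xi)|^2$, not merely $\sum_j(\sum_n\cdots)^2$. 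Applying that estimate with semigroup time $t-s$ and with $\sigma$ replaced by $\sigma(s)$ gives
\[\sum_{n=1}^r\sum_{j=1}^\infty \lambda_{n,j}^2\left|\left[T_i(t-s)R_n(s)f_{n,j}\right](\xi)\right|^2 \le C(t-s)^{-\frac{\beta(\rho-2)}{\rho}}|\sigma(s)|_E^2,\]
and substituting this into the previous display yields exactly \eqref{eq:Z_alpha-bound}. The same substitution, applied before the BDG step, also shows $\E\langle M\rangle_t \le C\,\E\int_0^t (t-s)^{-2\alpha-\beta(\rho-2)/\rho}|\sigma(s)|_E^2\,ds$, which is finite for the admissible range of $\alpha$ in view of \eqref{eq:beta-rho-relation} (so that $M$ and the Itô integrals above are genuinely well defined); if instead this quantity is infinite, the right-hand side of \eqref{eq:Z_alpha-bound} is infinite and there is nothing to prove.

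I expect the only genuinely delicate point to be bookkeeping rather than hard analysis: one must match the integrand of $\langle M\rangle_t$ precisely with the object estimated inside the proof of \eqref{eq:sum-of-squares-w-lambda} (keeping track of the time-dependence of $R_n(s)$ and the substitution $t \rightsquigarrow t-s$, $\sigma \rightsquigarrow \sigma(s)$), and to confirm the measurability/adaptedness of the integrand. The remaining ingredients — the Burkholder--Davis--Gundy inequality, the independence of the $W_{n,j}$, and Tonelli's theorem — are routine. This pointwise-in-$\xi$ bound is then the input to the factorization formula \eqref{eq:factorization} in the subsequent estimates on the stochastic convolution.
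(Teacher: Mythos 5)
Your proposal is correct and follows essentially the same route as the paper: expand the stochastic integral via the series representation of the noise, apply the Burkholder--Davis--Gundy inequality pointwise in $(t,\xi)$, and bound the resulting (diagonal) quadratic variation by \eqref{eq:sum-of-squares-w-lambda} applied at each time $s$ with $\sigma(s)$. The extra care you take with the fixed-endpoint martingale and the placement of the sum over $n$ only makes explicit what the paper leaves implicit.
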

\begin{proof}
  By \eqref{eq:noise} and the BDG inequality, for any $\xi \in \mathcal{O}$, $t>0$,
  \begin{align*}
    &\E|\tilde{Z}_{i,\alpha}(t,\xi)|^p\\
    &\leq C \E \left( \sum_{j=1}^\infty  \int_0^t (t-s)^{-2\alpha} \left|\left(T_i(t-s)\sum_{n=1}^rR_{n}(s) \lambda_{n,j} f_{n,j}\right)(\xi)\right|^2 ds\right)^{\frac{p}{2}}.\\
%    &\leq C \E \left( \sum_{j=1}^\infty \sum_{n=1}^r \int_0^t(t-s)^{-2\alpha} \lambda_{n,j}^2 (t-s)^{-2\alpha} \left|(T_i(t-s)\sigma(s) \lambda_{n,j} f_{n,j})(\xi)\right|^{\frac{4}{\rho}}\left|(T_i(t-s)\sigma(s)  f_{n,j})(\xi)\right|^{\frac{2(\rho-2)}{\rho}} ds\right)^{\frac{p}{2}}
  \end{align*}
  By \eqref{eq:sum-of-squares-w-lambda},
  \begin{align*}
    &\E|\tilde{Z}_{i,\alpha}(t,\xi)|^p \leq C \E \left(\int_0^t (t-s)^{-2\alpha - \frac{\beta(\rho-2)}{\rho}} |\sigma(s)|_{E}^2ds\right)^{\frac{p}{2}}.
  \end{align*}
\end{proof}
  %\begin{align*}
%    &\leq \E \left(\int_0^t(t-s)^{-\alpha}\left(\sum_{j=1}^\infty \sum_{n=1}^r \lambda_{n,j}^\rho |(T_i(t-s)\sigma_{n}(s) f_{n,j})(\xi)|^2 \right)^{\frac{2}{\rho}}
%    \left(\sum_{j=1}^\infty |(T_i(t-s)\sigma_{n}(s) f_{n,j})(\xi)|^2  \right)^{\frac{\rho-2}{\rho}} ds\right)^{\frac{p}{2}}
%  \end{align*}
%  Because $T_i(t)$ is a contraction semigroup on $E$,
%  \begin{equation}
%    |(T_i(t-s)\sigma_{n}(s)f_{n,j})(\xi)| \leq \sup_{\eta \in \mathal{O}} |\sigma_{n}(s,\eta)f_{n,j}(\eta)| \leq \max_{n \in \{1,..,r\}}|\sigma_{n}(s)|_{L^\infty(\mathcal{O})}|f_{n,j}|_{L^\infty(\mathcal{O})}.
%  \end{equation}
%  Therefore,
%  \begin{align*}
%    &\sum_{j=1}^\infty \sum_{n=1}^r \lambda_{n,j}^\rho |(T_i(t-s)\sigma_{n}(s) f_{n,j})(\xi)|^2 \\
%    & \leq \left( \sum_{j=1}^\infty \sum_{n=1}^r \lambda_{n,j}^\rho |f_{n,j}|_{L^\infty(\mathcal{O})}^2  \right)\max_{n \in \{1,..,r\}} |\sigma_{n}|_{L^\infty(\mathcal{O})}^2.
%  \end{align*}
%  By also applying \eqref{eq:sum-of-squares},
%  \begin{align*}
%    &\E \left(\int_0^t(t-s)^{-2\alpha}\left(\sum_{j=1}^\infty \sum_{n=1}^r \lambda_{n,j}^\rho |(T_i(t-s)\sigma_{n}(s) f_{n,j})(\xi)|^2 \right)^{\frac{2}{\rho}}
%    \left(\sum_{j=1}^\infty |(T_i(t-s)\sigma_{n}(s) f_{n,j})(\xi)|^2  \right)^{\frac{\rho-2}{\rho}} ds\right)^{\frac{p}{2}}\\
%    &\leq C\E \left(\int_0^t (t-s)^{-2\alpha}\left((t-s)^{-\beta}  \right)^{\frac{\rho-2}{\rho}} \max_{n \in \{1,..,r\}}|\sigma_{n}(s)|_{L^\infty(\mathcal{O})}^2 ds\right)^{\frac{p}{2}}.
%  \end{align*}
%\end{proof}
\begin{theorem}[Bounds on $\tilde{Z}_i$] \label{thm:tilde-Z-bound}
  Let $\tilde{Z}_i$ be given by \eqref{eq:Z-stoch-conv-appendix}. For $\alpha \in \left( 0, \frac{1}{2}\left(1- \frac{\beta(\rho-2)}{\rho} \right) \right)$, $\gamma \in (0,\alpha)$,  $p> \max\left\{\frac{1}{\alpha - \gamma}, \frac{d}{\gamma} \right\}$, and $T>0$, there exists $C=C(\alpha,\gamma,p,T)>0$ such that
  \begin{align} \label{eq:tilde-Z-bound}
    &\E \sup_{t \in [0,T]} \sup_{\xi \in \mathcal{O}} |\tilde{Z}_i(t,\xi)|^p  \leq C \E \int_0^T \left(\int_0^t (t-s)^{-2\alpha - \frac{\beta(\rho-2)}{\rho}}  |\sigma(s)|_{E}^2 ds \right)^{\frac{p}{2}}dt.
  \end{align}
\end{theorem}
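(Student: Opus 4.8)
The plan is the classical stochastic factorization argument of Da Prato and Zabczyk, built on the auxiliary process $\tilde Z_{i,\alpha}$ introduced in \eqref{eq:Z-alpha-appendix}. By the factorization identity \eqref{eq:factorization} together with the semigroup property of $T_i$, for $\alpha$, $\gamma$, $p$ as in the statement one has
\[
\tilde Z_i(t) = \frac{\sin(\pi\alpha)}{\pi}\int_0^t (t-\tau)^{\alpha-1}\, T_i(t-\tau)\tilde Z_{i,\alpha}(\tau)\, d\tau ,
\]
so that after this step all the remaining work is (pathwise) deterministic analysis of a fractional integral, followed by the pointwise moment bound of Lemma \ref{lem:Z_alpha-bound}. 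Note $\tilde Z_{i,\alpha}(\tau)\in L^p(\mathcal{O})$ a.s.\ for a.e.\ $\tau$, by Lemma \ref{lem:Z_alpha-bound} and Fubini.

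First I would apply the regularizing estimate \eqref{eq:semigroup-regularization-Sobolev} inside the factorization integral and use Minkowski's integral inequality to move the $W^{\gamma,p}(\mathcal{O})$-norm inside the $\tau$-integral, obtaining
\[
|\tilde Z_i(t)|_{W^{\gamma,p}(\mathcal{O})} \le C \int_0^t (t-\tau)^{\alpha-1-\gamma/2}\, |\tilde Z_{i,\alpha}(\tau)|_{L^p(\mathcal{O})}\, d\tau ,
\]
where the kernel is integrable near $\tau = t$ because $\gamma/2 < \alpha$. Since $\gamma p > d$, the Sobolev embedding $W^{\gamma,p}(\mathcal{O}) \hookrightarrow C(\bar{\mathcal{O}})$ upgrades this to a bound on $\sup_{\xi \in \mathcal{O}}|\tilde Z_i(t,\xi)|$ (and shows that $\tilde Z_i$ has a version continuous in $(t,\xi)$, so that $\sup_{t\in[0,T]}\sup_{\xi\in\mathcal{O}}|\tilde Z_i(t,\xi)|$ is measurable). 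Raising to the power $p$ and applying Hölder's inequality in $\tau$ with exponents $p$ and $p/(p-1)$, the hypothesis $p > 1/(\alpha-\gamma)$ ensures $(\alpha-1-\gamma/2)\tfrac{p}{p-1} > -1$, so the deterministic factor $\big(\int_0^t (t-\tau)^{(\alpha-1-\gamma/2)p/(p-1)}\,d\tau\big)^{p-1}$ is bounded by a constant $C_T$ uniformly for $t\in[0,T]$; hence
\[
\sup_{t\in[0,T]}\sup_{\xi\in\mathcal{O}}|\tilde Z_i(t,\xi)|^p \le C_{T,p}\int_0^T |\tilde Z_{i,\alpha}(\tau)|_{L^p(\mathcal{O})}^p\, d\tau .
\]

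Finally I would take expectations, interchange $\E$, $\int_0^T d\tau$ and $\int_{\mathcal{O}}d\xi$ by Tonelli's theorem (the integrand is nonnegative, which is what makes this legitimate even though $\sigma=\sigma(s)$ is random and only adapted), and substitute the pointwise estimate of Lemma \ref{lem:Z_alpha-bound}, which bounds $\E|\tilde Z_{i,\alpha}(\tau,\xi)|^p$ by $C\,\E\big(\int_0^\tau (\tau-s)^{-2\alpha-\beta(\rho-2)/\rho}|\sigma(s)|_E^2\,ds\big)^{p/2}$ independently of $\xi$; the $\xi$-integral then contributes only the factor $|\mathcal{O}|$, and relabeling $\tau$ as $t$ yields exactly \eqref{eq:tilde-Z-bound}. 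The main obstacle, beyond justifying the factorization and the Tonelli interchange, is purely the bookkeeping of exponents: one must verify that each of the three singular time-kernels that appear — $(t-\tau)^{\alpha-1-\gamma/2}$, its $p/(p-1)$-th power, and $(\tau-s)^{-2\alpha-\beta(\rho-2)/\rho}$ from Lemma \ref{lem:Z_alpha-bound} — is integrable, which is precisely what the conditions $\gamma < \alpha$, $p > 1/(\alpha-\gamma)$ (together with $\gamma p > d$), and $\alpha < \tfrac12(1-\beta(\rho-2)/\rho)$ guarantee; no new idea beyond the classical argument is needed.
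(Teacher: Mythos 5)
Your argument is correct and follows essentially the same route as the paper: factorization through $\tilde Z_{i,\alpha}$, the $W^{\gamma,p}$-regularization of $T_i$ plus the Sobolev embedding $W^{\gamma,p}(\mathcal{O})\hookrightarrow C(\bar{\mathcal{O}})$ for $\gamma p>d$, Hölder in the time variable using $p>1/(\alpha-\gamma)$, and finally Lemma \ref{lem:Z_alpha-bound} after a Tonelli interchange. The only cosmetic differences are that you keep the sharper kernel exponent $\alpha-1-\gamma/2$ (the paper relaxes it to $\alpha-1-\gamma$, which is why its integrability condition is exactly $p>1/(\alpha-\gamma)$) and that you correctly restore the semigroup factor $T_i(t-\tau)$ and the exponent $\alpha-1$ in the factorization identity, which appear with typos in \eqref{eq:factorization}.
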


\begin{proof}
  The fractional Sobolev space $W^{\gamma,p}(\mathcal{O})$ embeds continuously into $\tilde{E}$ whenever $\gamma \in (0,1)$ and $\gamma p >d$ \cite{sobolev}. Therefore, by factorization \eqref{eq:factorization}, the fractional Sobolev embedding, and the regularization of the $T_i$ semigroups \eqref{eq:semigroup-regularization-Sobolev},
  \begin{align*}
    &\E \sup_{t \in [0,T]} \sup_{\xi \in \mathcal{O}} |\tilde{Z}_i(t,\xi)|^p\\
    &\leq C \E \sup_{t \in [0,T]} |\tilde{Z}_i(t)|_{W^{\gamma,p}(\mathcal{O})}^p\\
    &\leq C \E \sup_{t \in [0,T]} \left| \int_0^t (t-s)^{\alpha -1}T_i(t-s) \tilde{Z}_{i,\alpha}(s)ds \right|_{W^{\gamma,p}(\mathcal{O})}^p\\
    &\leq C \E \sup_{t \in [0,T]} \left(\int_0^t (t-s)^{\alpha - 1 -\gamma} |\tilde{Z}_{i,\alpha}(s)|_{L^p(\mathcal{O})}ds\right)^p.
  \end{align*}
  By the H\"older inequality,
  \begin{align*}
    &\leq C \left(\int_0^T s^{\frac{(\alpha - 1 - \gamma)p}{p-1}}ds \right)^{p-1} \E \int_0^T |\tilde{Z}_{i,\alpha}(t)|_{L^p(\mathcal{O})}^pdt.
  \end{align*}
  By the fact that $p> \frac{1}{\alpha - \gamma}$, the first integral on the right-hand side is finite. By Lemma \ref{lem:Z_alpha-bound}, \eqref{eq:tilde-Z-bound} follows.
  %\begin{equation}
%    \E \sup_{t \in [0,T]} \sup_{\xi \in \mathcal{O}} |\tilde{Z}_i(t,\xi)|^p
%    \leq C \int_0^T \left(\int_0^t (t-s)^{-2\alpha - \frac{\beta(\rho-2)}{\rho}} \max_{n \in \{1,...,r\}} |\sigma_{n}(s)|_{L^\infty(\mathcal{O})}^2 ds \right)^{\frac{p}{2}}dt.
%  \end{equation}
\end{proof}

\begin{theorem} \label{thm:Z-bound}
  Let $Z_i$ be given by \eqref{eq:Z-mild-def-appendix}. For $\alpha \in \left( 0, \frac{1}{2}\left(1- \frac{\beta(\rho-2)}{\rho} \right) \right)$, $\gamma \in (0,\alpha)$,  $p> \max\left\{\frac{1}{\alpha - \gamma}, \frac{d}{\gamma} \right\}$, and $T>0$, there exists $C=C(\alpha,\gamma,p,T)>0$ such that
  \begin{equation} \label{eq:Z-bound}
    \E \sup_{t \in [0,T]} \sup_{\xi \in \mathcal{O}} |Z_i(t,\xi)|^p \leq C \E \int_0^T \left(\int_0^t (t-s)^{-2\alpha - \frac{\beta(\rho-2)}{\rho}} |\sigma(s)|_{E}^2 ds \right)^{\frac{p}{2}}dt
  \end{equation}
\end{theorem}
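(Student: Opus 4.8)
The plan is to deduce the estimate for $Z_i$ from the one already established for the ``bare'' stochastic convolution $\tilde Z_i$ in Theorem~\ref{thm:tilde-Z-bound}, using the mild identity \eqref{eq:Z-mild}. Write $Z_i(t)=\tilde Z_i(t)+\Psi_i(t)$ with $\Psi_i(t):=\int_0^t T_i(t-s)\mathcal{L}_i Z_i(s)\,ds$, where $T_i$ is the semigroup generated by the self-adjoint divergence-form part $\mathcal{B}_i$ and $\mathcal{L}_i$ is the first-order remainder from Proposition~\ref{prop:A_i}. The whole content of Theorem~\ref{thm:tilde-Z-bound} applies verbatim (same admissible $\alpha,\gamma,p$) to the first term, so only $\Psi_i$ — a pathwise, given-$\omega$ deterministic convolution — must be controlled, and that can be done by analytic smoothing.

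First I would record the pointwise-in-$\omega$ bound
\[
|\Psi_i(t)|_{\tilde E}\ \le\ C\int_0^t (t-s)^{-1/2}\,|Z_i(s)|_{\tilde E}\,ds,\qquad t\in[0,T].
\]
This follows from the smoothing estimate \eqref{eq:semigroup-regularization-C-1}: since $\mathcal{L}_i$ is first order it maps $\tilde E$ boundedly into $C^{-1}(\bar{\mathcal{O}})$, whence $|T_i(\tau)\mathcal{L}_i\varphi|_{\tilde E}\le C\tau^{-1/2}|\varphi|_{\tilde E}$, and $\tau^{-1/2}$ is integrable near $0$. (This is the step carried out in Section~4 of \cite{c-2003}; en route one also obtains that the Volterra equation \eqref{eq:Z-mild} has a unique solution $Z_i\in C([0,T];\tilde E)$ — hence $(t,\xi)\mapsto Z_i(t,\xi)$ is continuous — by a fixed-point argument with the integrable kernel $(t-s)^{-1/2}$, so that the left-hand side of \eqref{eq:Z-bound} equals $\E\sup_{t\le T}|Z_i(t)|_{\tilde E}^p$.)

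Next I would run a singular Gr\"onwall argument. Setting $\eta(t):=|Z_i(t)|_{\tilde E}$ and $a(t):=|\tilde Z_i(t)|_{\tilde E}$, the display above gives $\eta(t)\le a(t)+C\int_0^t(t-s)^{-1/2}\eta(s)\,ds$; replacing $a$ by the constant $\bar a:=\sup_{s\le T}a(s)$ and iterating the kernel $s\mapsto Cs^{-1/2}$ (whose convolution powers are eventually bounded) yields $\eta(t)\le \Theta(T)\,\bar a$ for an explicit constant $\Theta(T)$ depending only on $C$ and $T$ — this is the classical generalized Gr\"onwall lemma (see, e.g., \cite{dpz}). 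Thus $\sup_{t\le T}|Z_i(t)|_{\tilde E}\le \Theta(T)\,\sup_{t\le T}|\tilde Z_i(t)|_{\tilde E}$ almost surely. Taking $p$-th powers, expectations, and applying Theorem~\ref{thm:tilde-Z-bound},
\[
\E\sup_{t\in[0,T]}\sup_{\xi\in\mathcal{O}}|Z_i(t,\xi)|^p
\ \le\ \Theta(T)^p\,\E\sup_{t\in[0,T]}|\tilde Z_i(t)|_{\tilde E}^p
\ \le\ C\,\E\int_0^T\Bigl(\int_0^t (t-s)^{-2\alpha-\frac{\beta(\rho-2)}{\rho}}|\sigma(s)|_E^2\,ds\Bigr)^{p/2}dt,
\]
which is \eqref{eq:Z-bound}; the admissible ranges of $\alpha,\gamma,p$ are exactly those of Theorem~\ref{thm:tilde-Z-bound}, so no new restrictions arise.

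I expect the main obstacle to be the first step — controlling the lower-order term $T_i(t-s)\mathcal{L}_i$ with an \emph{integrable} singularity in $t-s$, even though $\mathcal{L}_i$ carries a full spatial derivative and the coefficients of $\mathcal{A}_i$ are only mildly regular. This is precisely why it is worth splitting $\mathcal{A}_i=\mathcal{B}_i+\mathcal{L}_i$: the self-adjoint part $\mathcal{B}_i$ supplies the explicit eigenfunction kernel used in Lemma~\ref{lem:sum-of-semigroup} to estimate $\tilde Z_i$, while the genuinely non-self-adjoint first-order part $\mathcal{L}_i$ is subordinate to $\mathcal{B}_i$ and is absorbed purely by analytic smoothing, as in \cite{c-2003}. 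Everything downstream — the singular Gr\"onwall inequality and the passage to $p$-th moments — is routine.
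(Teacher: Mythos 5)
Your proposal is correct and follows essentially the same route as the paper: split $Z_i$ via the mild identity \eqref{eq:Z-mild}, absorb the first-order term $\mathcal{L}_i$ through the smoothing estimate \eqref{eq:semigroup-regularization-C-1} with the integrable singularity $(t-s)^{-1/2}$, close with a Gr\"onwall-type argument, and invoke Theorem \ref{thm:tilde-Z-bound}. The only cosmetic difference is that the paper applies H\"older's inequality to the singular convolution and then the standard Gr\"onwall inequality after taking expectations of $p$-th powers, whereas you use the generalized (singular-kernel) Gr\"onwall lemma pathwise before passing to $p$-th moments---both close the argument equally well.
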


\begin{proof}
  By \eqref{eq:Z-mild} and \eqref{eq:semigroup-regularization-C-1},
  \begin{align*}
    &|Z_i(t)|_{\tilde{E}}^p \leq \left(\int_0^t |T_i(t-s) L_i Z_i(s)|_{\tilde{E}}ds\right)^p + C|\tilde{Z}_i(t)|_{\tilde{E}}^p\\
    &\leq C\left(\int_0^t (t-s)^{-\frac{1}{2}} |Z_i(s)|_{\tilde{E}}ds\right)^p + C|\tilde{Z}_i(t)|_{\tilde{E}}^p.
  \end{align*}
  By the H\"older inequality for $p>2$,
  \begin{align*}
    &|Z_i(t)|_{\tilde{E}}^p \leq  C\left(\int_0^t (t-s)^{-\frac{p}{2(p-1)}}ds \right)^{p-1} \int_0^t |Z_i(s)|_{\tilde{E}}^pds + C|\tilde{Z}_i(t)|_{\tilde{E}}^p.
  \end{align*}
  Taking expectation and applying Gr\"onwall's inequality,
  \[\E\sup_{t \in [0,T]} |Z_i(t)|^p_{\tilde{E}} \leq C_{T,p} \sup_{t \in [0,T]}|\tilde{Z}_i(t)|_{\tilde{E}}.\]
  The result follows from \eqref{eq:tilde-Z-bound}.
\end{proof}

We finish this section with the analysis of similar terms where the stochastic noise has been replaced by a $L^2([0,T]\times \mathcal{O}\times \{1,...,r\})$ control.

\begin{theorem} \label{thm:control-Y-bounds}
  Let $u = (u_1,...,u_r) \in L^2([0,T]\times \mathcal{O}\times \{1,...,r\})$. Let $\sigma \in  E_T$ and let $R$ be given by \eqref{eq:Rn}. Let $Y^u_i$ (weakly) solve
  \[dY^u_i(t)=   [A_i Y^u_i(t) + \sum_{n=1}^r R_n(t)Q_nu_n(t)]dt.\]
  Then $Y^u_i \in C([0,T]\times \mathcal{O})$ and there exists $C>0$, independent of $u$ and $\sigma$ such that
  \begin{align} \label{eq:Y-u-bounds}
    &\sup_{t \in [0,T]} \sup_{\xi \in \mathcal{O}}|Y^u_i(t,\xi)| \nonumber\\
    &\leq
    CN^{\frac{1}{2}} \sup_{t \in [0, T]}\left(\int_0^t (t-s)^{-\frac{\beta(\rho-2)}{\rho}}  |\sigma(s)|_E^2 ds  \right)^{\frac{1}{2}}.
  \end{align}
  where
  \[N = |u|^2_{L^2([0,T]\times \mathcal{O}\times \{1,...,r\})} = \sum_{n=1}^r \int_0^T \int_{\mathcal{O}} |u_n(s,\xi)|^2d\xi ds \]
\end{theorem}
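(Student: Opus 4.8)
The plan is to mirror the proof of Theorem~\ref{thm:Z-bound}, with the stochastic integral and the BDG inequality replaced by a Lebesgue integral and the Cauchy--Schwarz inequality; the latter is what produces the factor $N^{1/2}$. First I would peel off the first--order part of $\mathcal{A}_i$. Writing $\mathcal{A}_i=\mathcal{B}_i+\mathcal{L}_i$ as in Proposition~\ref{prop:A_i} and letting $T_i$ be the semigroup generated by the realization of $\mathcal{B}_i$, the mild solution
\[
  Y^u_i(t)=\int_0^t S_i(t-s)\sum_{n=1}^r R_n(s)Q_nu_n(s)\,ds
\]
also solves $Y^u_i(t)=\int_0^t T_i(t-s)\mathcal{L}_iY^u_i(s)\,ds+\tilde Y^u_i(t)$ with $\tilde Y^u_i(t):=\int_0^t T_i(t-s)\sum_{n=1}^r R_n(s)Q_nu_n(s)\,ds$, exactly as in \eqref{eq:Z-mild}--\eqref{eq:Z-stoch-conv-appendix}. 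It therefore suffices to establish the asserted bound for $\tilde Y^u_i$ and then absorb the $\mathcal{L}_i$-term.

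The heart of the argument is a pointwise kernel estimate for the integrand of $\tilde Y^u_i$. Fix $0<s<t$ and $\xi\in\mathcal{O}$ and set $u_{n,j}(s):=\langle u_n(s,\cdot),f_{n,j}\rangle_{L^2(\mathcal{O})}$, so that $Q_nu_n(s)=\sum_{j}\lambda_{n,j}u_{n,j}(s)f_{n,j}$ and, by Parseval, $\sum_j|u_{n,j}(s)|^2=|u_n(s,\cdot)|_{L^2(\mathcal{O})}^2$. By the Cauchy--Schwarz inequality, first in $j$ and then in $n$,
\begin{align*}
  \Bigl|\Bigl(T_i(t-s)\sum_{n=1}^r R_n(s)Q_nu_n(s)\Bigr)(\xi)\Bigr|
  &\le\sum_{n=1}^r|u_n(s,\cdot)|_{L^2(\mathcal{O})}\Bigl(\sum_{j=1}^\infty\lambda_{n,j}^2\bigl|(T_i(t-s)R_n(s)f_{n,j})(\xi)\bigr|^2\Bigr)^{1/2}\\
  &\le|u(s)|_{L^2(\mathcal{O}\times\{1,\dots,r\})}\Bigl(\sum_{n=1}^r\sum_{j=1}^\infty\lambda_{n,j}^2\bigl|(T_i(t-s)R_n(s)f_{n,j})(\xi)\bigr|^2\Bigr)^{1/2}.
\end{align*}
The double sum on the right is precisely the quantity bounded by $C(t-s)^{-\beta(\rho-2)/\rho}|\sigma(s)|_E^2$ (this is established in the course of proving \eqref{eq:sum-of-squares-w-lambda}, applied with the spatial slice $\sigma(s,\cdot)$). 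Hence the integrand defining $\tilde Y^u_i(t,\xi)$ is dominated by $C\,|u(s)|_{L^2(\mathcal{O}\times\{1,\dots,r\})}\,(t-s)^{-\beta(\rho-2)/2\rho}|\sigma(s)|_E$, which is integrable on $(0,t)$ because $\beta(\rho-2)/\rho<1$ by \eqref{eq:beta-rho-relation}.

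Integrating this pointwise bound over $s\in(0,t)$ and applying Cauchy--Schwarz in $s$ --- splitting off $(t-s)^{-\beta(\rho-2)/2\rho}|\sigma(s)|_E$ against $|u(s)|_{L^2}$ and using $\int_0^t|u(s)|_{L^2(\mathcal{O}\times\{1,\dots,r\})}^2\,ds\le N$ --- yields, for every $t\in[0,T]$,
\[
  \sup_{\xi\in\mathcal{O}}|\tilde Y^u_i(t,\xi)|\le CN^{1/2}\Bigl(\int_0^t(t-s)^{-\frac{\beta(\rho-2)}{\rho}}|\sigma(s)|_E^2\,ds\Bigr)^{1/2},
\]
which is already of the claimed form; that $\tilde Y^u_i$, hence $Y^u_i$, lies in $C([0,T]\times\bar{\mathcal{O}})$ follows from the smoothing properties of $T_i$ and dominated convergence (or by the stochastic factorization argument of Theorem~\ref{thm:tilde-Z-bound} with the It\^o integral replaced by a Lebesgue integral). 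Finally, to pass from $\tilde Y^u_i$ to $Y^u_i$ I would use $|T_i(t-s)\mathcal{L}_iY^u_i(s)|_{\tilde E}\le C(t-s)^{-1/2}|Y^u_i(s)|_{\tilde E}$, which follows from \eqref{eq:semigroup-regularization-C-1} since $\mathcal{L}_i$ is first order, and then close the estimate with the generalized (singular-kernel) Gr\"onwall inequality, exactly as in the final lines of the proof of Theorem~\ref{thm:Z-bound}; this affects only the constant $C$. I expect the main obstacle to be the pointwise kernel estimate with the sharp singularity exponent $\beta(\rho-2)/\rho$, but that is supplied by \eqref{eq:sum-of-squares-w-lambda}; what remains is the routine Cauchy--Schwarz bookkeeping that generates the $N^{1/2}$ factor together with the handling of the first-order perturbation $\mathcal{L}_i$.
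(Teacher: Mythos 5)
Your proposal is correct and follows essentially the same route as the paper: decompose $\mathcal{A}_i=\mathcal{B}_i+\mathcal{L}_i$, bound $\tilde Y^u_i$ via Cauchy--Schwarz against the kernel estimate \eqref{eq:sum-of-squares-w-lambda} and Parseval (the only difference being that you apply Cauchy--Schwarz pointwise in $s$ and then again in $s$, while the paper applies it jointly in $(n,j,s)$ — the resulting bound is identical), and then absorb the first-order term with \eqref{eq:semigroup-regularization-C-1} and Gr\"onwall.
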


\begin{proof}
  As we did for the stochastic term, let
  \begin{equation} \label{eq:tilde-Y}
    \tilde{Y}^u_i(t) = \int_0^t T_i(t-s) \sum_{n=1}^r R_n(s) Q_n u_n(s)ds.
  \end{equation}
  This is the solution to
  \[d \tilde{Y}^u_i(t) = [B_i \tilde{Y}^u_i(t) + \sum_{n=1}^r R_n(t)Q_n u_n(t)]dt.\]
  We can rewrite \eqref{eq:tilde-Y} as
  \[\tilde{Y}^u_i(t) = \sum_{n=1}^r \sum_{j=1}^\infty \int_0^t T_i(t-s)  R_n(s) \lambda_{n,j} f_{n,j} \left<u_n(s),f_{n,j} \right>_{L^2(\mathcal{O})}ds.\]
  By the H\"older inequality, for any $\xi \in \mathcal{O}$,
  \begin{align*}
    |\tilde{Y}^u_i(t,\xi)| \leq &\left( \sum_{n=1}^r \sum_{j=1}^\infty \int_0^t |(T_i(t-s)  R_n(s) \lambda_{n,j} f_{n,j})(\xi)|^2 ds \right)^{\frac{1}{2}}\\
     &\qquad\times\left(\sum_{n=1}^r \sum_{j=1}^\infty \int_0^t \left<u_n(s),f_{n,j} \right>_{L^2(\mathcal{O})}^2ds \right)^{\frac{1}{2}}.
  \end{align*}
  By \eqref{eq:sum-of-squares-w-lambda} and the fact that $\{f_{n,j}\}_{j=1}^\infty$ is a complete orthonormal basis of $L^2(\mathcal{O})$, for any $t >0$ and $\xi \in \mathcal{O}$,
  \begin{align*}
    |\tilde{Y}^u_i(t,\xi)| \leq C &\left(\int_0^t (t-s)^{-\frac{\beta(\rho-2)}{\rho}}|\sigma(s)|_E^2 ds \right)^{\frac{1}{2}}%\\
    %&\times
    \left(\sum_{n=1}^r \int_0^t \int_\mathcal{O} |u_n(s,\eta)|^2 d\eta ds \right)^{\frac{1}{2}}.
  \end{align*}

  The continuity of $\tilde{Y}^u_i$ in space in time can be shown by standard arguments (see, for example, \cite{Pazy}).

  Then $Y_i^u(t)$ solves
  \[Y^i_u(t) = \int_0^tT_i(t-s) L_i Y^u_i(s)ds + \tilde{Y}_i^u(t).\]
  By \eqref{eq:semigroup-regularization-C-1},
  \[\left|Y^u_i(t)\right|_{\tilde{E}} \leq \int_0^t (t-s)^{-\frac{1}{2}}\left|Y^u_i(s)\right|_{\tilde{E}}ds + |\tilde{Y}^u_i(t)|_{\tilde{E}}.\]
  The result follows by Gr\"onwall's inequality.
\end{proof}

\section{Bounds on the fixed-point mapping that are uniform with respect to the initial condition.} \label{S:unif-bounds}
Let $E$ and $E_T$ be defined as in Section \ref{S:notation-assumptions}. For any $z \in E_T$ with $z(0)=0$ and $x \in E$ let $\mathcal{M}_x(z)$ be the solution to the fixed-point problem
\begin{equation} \label{eq:M_x-def}
  \mathcal{M}_x(z)(t) := \mathcal{M} \left( S(\cdot)x + z \right)
\end{equation}
where $\mathcal{M}$ is the fixed point mapping defined in \eqref{eq:M-def} and $S$ is the semigroup defined in Section \ref{SS:semigroup}.
The following result establishes bounds on $\mathcal{M}_x$ that are independent of $x$ when the vector field $f$ features super-linear dissipativity (see Assumption \ref{assum:super-dissip}).

\begin{theorem} \label{thm:M-bound-super-dissip}
  Assume Assumptions \ref{assum:vector-field},  \ref{assum:elliptic-operators},  and \ref{assum:super-dissip}. For any $t>0$ there exists $C_t>0$ ($C_t$ increases as $t$ increases), independent of $x \in E$, such that for any $z \in E_t$ with $z(0)=0$ and any $x \in E$,
  \begin{equation} \label{eq:M-bound-super-dissip}
    %|\mathcal{M}_x(t)|_E \leq C_t\left( 1+ t^{-\frac{1}{m-1}} +  |z|_{E_t}\right).
    |\mathcal{M}_x(z)(t)|_E \leq  C_t \left( 1+    t^{-\frac{1}{m-1}}  +|z|_{E_t}\right).
  \end{equation}
\end{theorem}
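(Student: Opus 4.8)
The plan is to reduce the estimate to a scalar differential inequality for $\phi(t):=|\mathcal{M}_x(z)(t)-z(t)|_E$ and then to compare $\phi$ with a universal super-solution ODE whose only singularity, a $t^{-1/(m-1)}$ blow-up at $t=0$, is insensitive to the initial datum. Write $u:=\mathcal{M}_x(z)$ and $v:=u-z$; then by \eqref{eq:M-def} and \eqref{eq:M_x-def}, $v(0)=x$ and $v(t)=S(t)x+\int_0^t S(t-s)F(s,v(s)+z(s))\,ds$, so $v$ solves (weakly, and then strongly after the regularization used in Theorem 7.7 of \cite{dpz} and Proposition 6.2.2 of \cite{cerrai}) the equation $\partial_t v_i=\mathcal{A}_i v_i+g_i(t,\xi,v_i+z_i)+h_i(t,\xi,v+z)$. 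Exactly as in the proof of Theorem \ref{thm:M-Lipschitz}, I would apply Proposition \ref{prop:left-deriv} at a maximizer $(i_t,\xi_t)$ of $|v(t,\cdot)|$, use $\mathcal{A}_{i_t}v_{i_t}(t,\xi_t)\sgn(v_{i_t}(t,\xi_t))\le 0$ (ellipticity plus concavity at an extremum), and bound the $h$-term by $L(t)(1+\phi(t)+|z|_{E_t})$ via \eqref{eq:h-lin-growth}, arriving at
\[
\frac{d^-}{dt}\phi(t)\le g_{i_t}(t,\xi_t,w_t)\,\sgn(v_{i_t}(t,\xi_t))+L(t)\big(1+\phi(t)+|z|_{E_t}\big),\qquad w_t:=v_{i_t}(t,\xi_t)+z_{i_t}(t,\xi_t).
\]

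The heart of the argument is a dichotomy at each fixed time $t$. Either $\phi(t)\le 2(c_0+|z|_{E_t})$, or $|v_{i_t}(t,\xi_t)|=\phi(t)>2|z_{i_t}(t,\xi_t)|$; in the latter case $\sgn(w_t)=\sgn(v_{i_t}(t,\xi_t))$ and $|w_t|\ge \phi(t)-|z|_{E_t}>c_0$, so the super-linear dissipativity \eqref{eq:g-super-dissip} yields $g_{i_t}(t,\xi_t,w_t)\sgn(v_{i_t}(t,\xi_t))\le-\mu|w_t|^m\le-\mu 2^{-m}\phi(t)^m$. Absorbing the linear term $L(t)\phi(t)$ into $\tfrac{\mu}{2}2^{-m}\phi(t)^m$ by Young's inequality, the second alternative becomes $\frac{d^-}{dt}\phi(t)\le-\tfrac{\mu}{2}2^{-m}\phi(t)^m+\Lambda_t$ with $\Lambda_t:=L(t)(1+|z|_{E_t})+C(\mu,m,L(t))$. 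Setting $K_t:=\max\!\big(2(c_0+|z|_{E_t}),\,(4\Lambda_t/(\mu 2^{-m}))^{1/m}\big)$, which is increasing in $t$ and bounded by $C_t(1+|z|_{E_t})$, the dichotomy simplifies to: for every $t$, either $\phi(t)\le K_t$, or $\frac{d^-}{dt}\phi(t)\le-\tfrac{\mu}{4}2^{-m}\phi(t)^m$.

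Then I would close the estimate by a comparison argument on a fixed interval $[0,t]$, with $K_t,\Lambda_t$ frozen at the endpoint. If $\phi(t)>K_t$, then continuity of $\phi$ together with the dichotomy forces $\phi(s)>K_t$ for all $s\in[0,t]$ — otherwise, letting $s_0$ be the last time $\phi$ hits $K_t$, comparison on $(s_0,t]$ with the decreasing ODE $\Psi'=-\tfrac{\mu}{4}2^{-m}\Psi^m$, $\Psi(s_0)=\phi(s_0)=K_t$, would give $\phi(t)\le K_t$, a contradiction. Hence on all of $[0,t]$ we have $\frac{d^-}{ds}\phi(s)\le-\tfrac{\mu}{4}2^{-m}\phi(s)^m$, and comparing with the explicit solution started from $\Psi(0)=|x|_E$ gives $\phi(t)\le\big(\tfrac{\mu}{4}2^{-m}(m-1)t\big)^{-1/(m-1)}=:c_3(\mu,m)\,t^{-1/(m-1)}$, which is independent of $x$. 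In every case $\phi(t)\le K_t+c_3 t^{-1/(m-1)}\le C_t(1+|z|_{E_t}+t^{-1/(m-1)})$, and since $|\mathcal{M}_x(z)(t)|_E\le\phi(t)+|z|_{E_t}$ this proves \eqref{eq:M-bound-super-dissip}.

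I expect the main obstacle to be the regime of moderate $\phi(t)$ (values comparable to $c_0+|z|_{E_t}$), where the super-linear term is not yet dominant and where Assumption \ref{assum:super-dissip} provides no uniform-in-$(t,\xi)$ control on $g$ near the origin: the dichotomy is precisely the device that bypasses this, by noting that in that regime $\phi(t)$ is already controlled by $c_0$ and $|z|_{E_t}$ alone, with no dependence on $x$. A secondary, routine point is justifying comparison for inequalities phrased through the left derivative $\frac{d^-}{dt}$ and the reduction to strongly differentiable $v$; both are handled as in Appendix \ref{S:appendix-subdiff} and the arguments of \cite{dpz,cerrai} already used in Section \ref{S:M}.
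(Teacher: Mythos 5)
Your proposal is correct and follows essentially the same route as the paper's proof: the left-derivative of $|v(t)|_E$ at a maximizer, ellipticity killing the $\mathcal{A}_{i_t}$ term, super-linear dissipativity giving $-c\,|v(t)|_E^m$ above a threshold depending only on $c_0$, $L(t)$, and $|z|_{E_t}$ (not on $x$), and ODE comparison yielding the $x$-independent $t^{-1/(m-1)}$ bound together with a barrier argument below the threshold. The only differences are cosmetic: you absorb the Lipschitz term by Young's inequality and run the barrier argument backward from the endpoint $t$, whereas the paper absorbs it directly for large $|v(t)|_E$ and argues forward from $t=0$ via the first crossing time $\tau$.
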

\begin{proof}
  Let
  \begin{equation}
    v(t) = \mathcal{M}_x(z)(t) - z(t).
  \end{equation}
  $v(t)$ solves the integral equation
  \begin{equation}
     v(t) = S(t)x + \int_0^t S(t-s)F(v(s) + z(s))ds.
  \end{equation}
  Therefore,  $v$ is weakly differentiable and solves
  \[\frac{\partial }{\partial t} v_i(t,\xi) = \mathcal{A}_i v_i(t,\xi) + g_i\big(v_i(t,\xi)+ z_i(t,\xi)\big) + h_i\big(v(t,\xi) + z(t,\xi)\big).\]
  Using the arguments of  Theorem 7.7 in \cite{dpz} and Proposition 6.2.2 of \cite{cerrai} we may assume without loss of generality that $v$ is strongly differentiable.

  By Proposition \ref{prop:left-deriv}, and Assumption \ref{assum:super-dissip}, for any $i_t \in \{1,...,r\}$ and  $\xi_t \in \mathcal{O}$ such that
  \begin{equation} \label{eq:v-maximizers}
    |v(t)|_E = |v_{i_t}(t,\xi_t)|
  \end{equation}
  \begin{align*}
    \frac{d^-}{dt}& |v(t)|_E \\
    \leq &\mathcal{A}_{i_t} v_{i_t}(t,x_t)\sgn(v_{i_t}(t,\xi_t))\\
    &+ g_{i_t}(t,\xi_t,v_{i_t}(t,\xi_t) + z_{i_t}(t,\xi_t))\sgn(v_{i_t}(t,\xi_t))\\
    &+ h_{i_t}(t,\xi, v(t,\xi_t) + z(t,\xi_t)) \sgn(v_{i_t}(t,\xi_t)).
  \end{align*}

  Because $(i_t,\xi_t)$ is a maximizer and $\mathcal{A}_{i_t}$ is an elliptic operator, the concavity of a function at is maximum/minimum implies that
  \begin{equation}
    \mathcal{A}_{i_t} v_{i_t}(t,x_t)\sgn(v_{i_t}(t,\xi_t)) \leq 0.
  \end{equation}

  We will estimate this derivative when $|v(t)|_E$ is large. If  $|v(t)|_E \geq 2 c_0 + 2 | z|_{E_t}$, where $c_0$ is from Assumption \ref{assum:super-dissip},
  then
  \begin{equation} \label{eq:v+z-ineq}
    |v_{i_t}(t,\xi_t) + z_{i_t}(t,\xi_t)| \geq |v_{i_t}(t,\xi_t)| - |z_{i_t}(t,\xi_t)| \geq  \frac{1}{2} | v_{i_t}(t,\xi_t)| > c_0
  \end{equation}
  and
  \begin{equation} \label{eq:equal-sgn}
    \sgn(v_{i_t}(t,\xi_t) + z_{i_t}(t,\xi_t)) = \sgn(v_{i_t}(t,\xi_t)).
  \end{equation}
  Therefore, \eqref{eq:g-super-dissip}, \eqref{eq:v+z-ineq}, and \eqref{eq:equal-sgn} guarantee that
  \begin{equation}
    g_{i_t}(t,\xi_t,v_{i_t}(t,\xi_t) + z_{i_t}(t,\xi_t))\sgn(v_{i_t}(t,\xi_t))
    \leq -\frac{\mu}{2^m} |v_{i_t}(t,\xi_t)|^m = -\frac{\mu}{2^m} |v(t)|_E^m.
  \end{equation}
  By \eqref{eq:h-lin-growth} in Assumption \ref{assum:vector-field}, for any $t \in [0,T]$,
  \begin{align}
    h_{i_t}(t,\xi, v(t,\xi_t) + z(t,\xi_t)) \sgn(v_{i_t}(t,\xi_t)) &\leq L(t)(1 + |v(t) + z(t)|_E) \nonumber\\
    &\leq L(t) \left(1 + \frac{3}{2} |v(t)|_E\right).
  \end{align}
  There exists a constant $C_T>c_0$, depending on $\mu$, $c_0$, and $L(T)$ such that whenever $|v(t)|_E> C_T + 2|z|_{E_T}$,
  \begin{equation}
    - \frac{\mu}{2^m}|v(t)|_E^m + L(T)\left(1 + \frac{3}{2} |v(t)|_E\right)\leq - \frac{\mu}{2^{m+1}}|v(t)|_E^m.
  \end{equation}
  Therefore, whenever $t \in [0,T]$ and
  \begin{equation}
    |v(t)|_E \geq C_T + 2|z|_{E_T}
  \end{equation}
  it follows from the above estimates that
  \begin{align} \label{eq:v-ode-super-poly}
    \frac{d^-}{dt} |v(t)|_E \leq - \frac{\mu}{2^{m+1}} |v(t)|_E^m.
  \end{align}

  We now separate this analysis into two cases: $|v(0)|_E \leq C_T + 2|z|_{E_T}$ and $|v(0)|_E > C_T + 2|z|_{E_T}$.

  If $|v(0)|_E \leq C_T + 2|z|_{E_T}$, then $|v(t)|_{E} \leq C_T + 2|z|_{E_T}$ for all $t \in [0,T]$ because $\frac{d^-}{dt}|v(t)|_E <0$ when $|v(t)|_E = C_T + 2|z|_{E_T}$. The negative left-derivative implies that $|v(t)|_E$ cannot ever reach the value $C_T + 2 |z|_{E_T}$ if it starts below that value.

  On the other hand, when $|v(0)|_E > C_T + 2|z|_{E_T}$, let $\tau = \inf\{t \in [0,T]: |v(t)|_E \leq C_T + 2|z|_{E_T}\}$. Observe that \eqref{eq:v-ode-super-poly} holds for all $t \in [0,\tau]$. By a comparison principle, there exists $C>0$ such that for all $t \in [0,\tau]$
  \begin{equation}
    |v(t)|_E \leq C t^{-\frac{1}{m-1}} \text{ for all } t \in [0,\tau]
  \end{equation}
  uniformly with respect to initial data. Then for $t \in (\tau,T]$,
  \begin{equation}
    |v(t)|_E \leq C_T + 2|z|_{E_T}
  \end{equation}
  because $|v(t)|_E$ cannot exceed this value once it goes below it.

  These calculations show that, independent of initial data, $|v(t)|_E$ is bounded by
  \begin{equation}
    |v(t)|_E \leq \max \left\{C t^{-\frac{1}{m-1}}, C_T + 2|z|_{E_T} \right\} \text{ for all } t \in [0,T].
  \end{equation}
  
  Finally,
  \[|M_x(z)(t)|_E \leq |v(t)|_E + |z(t)|_E \leq C_t \left(1 + t^{-\frac{1}{m-1}} + |z|_{E_t} \right)\]

\end{proof}

\end{appendix}

\bibliographystyle{amsplain}
\bibliography{2020}
\end{document}